\newtheorem{theorem}{Theorem}[section]
\newtheorem*{theorem*}{Theorem}
\newtheorem*{maintheorem*}{Main Theorem}
\newtheorem{lemma}[theorem]{Lemma}
\newtheorem{corollary}[theorem]{Corollary}
\newtheorem{proposition}[theorem]{Proposition}
\newtheorem*{question*}{Question}
\theoremstyle{remark}
\newtheorem{remark}[theorem]{Remark}
\newtheorem{example}[theorem]{Example}
\theoremstyle{definition}
\newtheorem{definition}[theorem]{Definition}
\setlist[1]{labelindent=\parindent, leftmargin=*}
\DeclareMathOperator{\GL}{GL}
\DeclareMathOperator{\PGL}{PGL}
\DeclareMathOperator{\SL}{SL}
\DeclareMathOperator{\Aff}{Aff}
\DeclareMathOperator{\T}{T}
\DeclareMathOperator{\D}{D}
\DeclareMathOperator{\pr}{pr}
\DeclareMathOperator{\Aut}{Aut}
\DeclareMathOperator{\Cent}{Cent}
\DeclareMathOperator{\Bir}{Bir}
\DeclareMathOperator{\id}{id}
\DeclareMathOperator{\Tr}{Tr}
\DeclareMathOperator{\Spec}{Spec}
\DeclareMathOperator{\im}{im}
\DeclareMathOperator{\aff}{aff}
\DeclareMathOperator{\ant}{ant}
\DeclareMathOperator{\dom}{dom}
\DeclareMathOperator{\car}{char}
\DeclareMathOperator{\len}{len}
\DeclareMathOperator{\rRepl}{rRepl}
\DeclareMathOperator{\Norm}{Norm}
\DeclareMathOperator{\Rat}{Rat}
\DeclareMathOperator{\lociso}{lociso}
\def\dashmapsto{\mapstochar\dashrightarrow}
\newcommand{\OO}{\mathcal{O}}
\newcommand{\GG}{\mathbb{G}}
\newcommand{\CC}{\mathbb{C}}
\newcommand{\QQ}{\mathbb{Q}}
\newcommand{\PP}{\mathbb{P}}
\newcommand{\NN}{\mathbb{N}}
\renewcommand{\AA}{\mathbb{A}}
\newcommand{\kk}{\textbf{k}}
\newcommand{\aquot}{/ \! \! /}
\renewcommand{\D}{\mathcal{D}}
\newcommand{\set}[2]{\left\{\,#1 \ | \ #2\,\right\}}
\newcommand{\Bigset}[2]{\left\{\,#1 \ \Big| \ #2\,\right\}}
\newcommand*{\da@rightarrow}{\mathchar"0\hexnumber@\symAMSa 4B }
\newcommand*{\da@leftarrow}{\mathchar"0\hexnumber@\symAMSa 4C }
\newcommand*{\xdashrightarrow}[2][]{%
  \mathrel{%
    \mathpalette{\da@xarrow{#1}{#2}{}\da@rightarrow{\,}{}}{}%
  }%
}
\newcommand{\xdashleftarrow}[2][]{%
  \mathrel{%
    \mathpalette{\da@xarrow{#1}{#2}\da@leftarrow{}{}{\,}}{}%
  }%
}
\newcommand*{\da@xarrow}[7]{%
  \sbox0{$\ifx#7\scriptstyle\scriptscriptstyle\else\scriptstyle\fi#5#1#6\m@th$}%
  \sbox2{$\ifx#7\scriptstyle\scriptscriptstyle\else\scriptstyle\fi#5#2#6\m@th$}%
  \sbox4{$#7\dabar@\m@th$}%
  \dimen@=\wd0 %
  \ifdim\wd2 >\dimen@
    \dimen@=\wd2 %
  \fi
  \count@=2 %
  \def\da@bars{\dabar@\dabar@}%
  \@whiledim\count@\wd4<\dimen@\do{%
    \advance\count@\@ne
    \expandafter\def\expandafter\da@bars\expandafter{%
      \da@bars
      \dabar@ 
    }%
  }%
  \mathrel{#3}%
  \mathrel{%
    \mathop{\da@bars}\limits
    \ifx\\#1\\%
    \else
      _{\copy0}%
    \fi
    \ifx\\#2\\%
    \else
      ^{\copy2}%
    \fi
  }%
  \mathrel{#4}%
}
\newcommand*{\qrr@gobblenexttocentry}[5]{}
\newcommand*{\qrr@gobblenexttocentry}[4]{}
\newcommand*{\addsubsection}{%
	\addtocontents{toc}{\protect\qrr@gobblenexttocentry}%
	\subsection}
\title[Group Theoretical Characterizations of Rationality]{
Group Theoretical Characterizations of Rationality}
\author[A. Regeta, C. Urech, \and I. van Santen]
{Andriy Regeta, Christian Urech, \and Immanuel van Santen}
\address{\noindent Institut f\"{u}r Mathematik, Friedrich-Schiller-Universit\"{a}t Jena, \newline
	\indent Ernst-Abbe-Platz 2, DE-07737, Germany}
\email{andriyregeta@gmail.com}
\address{Departement Mathematik, 
	ETH Zürich,\newline
	\indent Rämistrasse 101, CH-8092 Zürich, Switzerland}
\email{christian.urech@gmail.com}
\address{Departement Mathematik und Informatik, 
	Universit\"at Basel,\newline
	\indent Spiegelgasse 1, CH-4051 Basel, Switzerland}
\email{immanuel.van.santen@math.ch}
\newcounter{claim}[theorem]
\renewcommand{\theclaim}{\arabic{claim}}
\newenvironment{claim}{\vspace{1.75\medskipamount} 
\refstepcounter{claim}\par\noindent\textbf{Claim \theclaim.}}{}
\begin{document}
\setcounter{tocdepth}{1}

\subjclass[2020]{14E07, 14E08, 14L30}
\keywords{Groups of birational transformations, rationality questions, Cremona groups, Borel subgroups}

\begin{abstract}
	Let $X$ be an irreducible variety and $\Bir(X)$ its group of birational transformations. We show that the group structure of $\Bir(X)$ determines whether $X$ is rational and whether $X$ is ruled.
	
	Additionally, we prove that any Borel subgroup of $\Bir(X)$ has derived length at most twice the dimension of $X$, with equality occurring if and only if $X$ is rational and the Borel subgroup is standard. We also provide examples of
	non-standard Borel subgroups of $\Bir(\PP^n)$ and $\Aut(\AA^n)$,
	thereby resolving conjectures by Popov and Furter-Poloni.
\end{abstract}

\maketitle

\tableofcontents

\section{Introduction}

We work over the field $\CC$ of complex numbers. An  important birational invariant of a variety $X$ is its group of birational transformations $\Bir(X)$. It is a natural question to ask whether the abstract group structure of $\Bir(X)$ characterizes rationality. The first main result of this paper answers this question positively:

\begin{theorem}
	\label{thm:charmain}
	Let $X$ be an irreducible variety such that $\Bir(X)$ is isomorphic as a group to $\Bir(\PP^n)$. Then $X$ is birationally equivalent to $\PP^n$. 
\end{theorem}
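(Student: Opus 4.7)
The plan is to combine the two main theorems announced in the abstract---the derived-length bound on Borel subgroups and the group-theoretic characterization of rationality---to pin down both $\dim X$ and the birational type of $X$.

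For any group $G$, write $d(G)$ for the supremum of the derived lengths of its solvable subgroups; equivalently, since Borel subgroups are by definition maximal solvable, $d(G)$ is the maximal derived length taken over Borel subgroups of $G$, and is therefore an isomorphism invariant of $G$. Applied to $\Bir(\PP^n)$, the Borel subgroup theorem (the equality case applied to the rational variety $\PP^n$) furnishes a standard Borel of derived length exactly $2n$ and bounds every Borel by $2n$, giving $d(\Bir(\PP^n)) = 2n$.

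Under the assumed isomorphism $\Bir(X) \cong \Bir(\PP^n)$, we therefore have $d(\Bir(X)) = 2n$. Applying the Borel subgroup theorem to $X$,
\[
2n \;=\; d(\Bir(X)) \;\leq\; 2\dim X, \qquad \text{with equality iff $X$ is rational},
\]
so $\dim X \geq n$. To obtain the reverse inequality I would invoke the group-theoretic characterization of rationality from the abstract: because $\Bir(X) \cong \Bir(\PP^n)$ and $\PP^n$ is rational, $X$ must itself be rational. We are therefore in the equality case of the Borel theorem, which forces $2\dim X = 2n$. Thus $X$ is a rational variety of dimension $n$, hence birational to $\PP^n$.

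The main obstacle is, of course, the rationality characterization itself: the derived-length invariant alone leaves open the pathological possibility $\dim X > n$ with $X$ non-rational (in which case $d(\Bir(X)) = 2n < 2\dim X$ would merely be a strict version of the Borel inequality). Excluding this scenario requires a genuinely finer group-theoretic analysis---most plausibly through the recognition of algebraic subgroups such as $\PGL_{n+1}$ or Jonquières-type simple subgroups inside $\Bir(X)$, combined with rigidity results that force such abstractly embedded subgroups to come from regular actions on a birational model of $X$ which then constrain its birational type.
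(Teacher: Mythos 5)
Your proposal is circular at its central step. The ``group-theoretic characterization of rationality from the abstract'' that you invoke to conclude ``$X$ must itself be rational'' \emph{is} Theorem~\ref{thm:charmain}, the statement under proof. Everything else in your argument (the derived-length bookkeeping that pins down $\dim X = n$ once rationality is known) is downstream of that step, so the proposal contains no proof of the actual content of the theorem. You do acknowledge this in your final paragraph, but the ``finer analysis'' you gesture at (recognizing $\PGL_{n+1}$ or Jonqui\`eres subgroups abstractly) is not what the paper does and is not obviously feasible: an abstract group isomorphism need not send algebraic subgroups to algebraic subgroups, and the paper goes to considerable lengths (self-centralizing subgroups are closed, Corollary~\ref{Cor.Centralizer}, Theorem~\ref{Thm.Existence_unipot}) precisely to manufacture subgroups whose images under $\Xi$ are controllable.

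There is a second, independent problem with the invariant $d(G)$. Theorem~\ref{thm:mainborel} bounds the derived length of \emph{connected closed} solvable subgroups only; connectedness and closedness are defined via the Zariski topology on $\Bir(X)$, which is not a priori preserved by an abstract group isomorphism, so ``$d$ restricted to connected solvable subgroups'' is not evidently an isomorphism invariant. If instead you take the supremum over \emph{all} solvable subgroups, you do get an isomorphism invariant, but the Borel theorem no longer computes it (finite or non-connected solvable subgroups are not covered by the bound $2n$), so the identity $d(\Bir(\PP^n)) = 2n$ is unjustified. The paper's actual route is entirely different: it transports the translation subgroup $\Tr_n$ and the torus $\T_n$ (both self-centralizing, hence with closed images) through the isomorphism, uses Theorem~\ref{Thm.Existence_unipot} together with the descent result Proposition~\ref{Prop.Descent_Ga} to show $\Xi(\Tr_n)$ consists of unipotent elements, and then splits into the case $\kk(X)^{\Xi(\Tr_n)} = \kk$ (where Corollary~\ref{Cor.Centralizer} and a comparison of $\Aff_n/\Tr_n \simeq \GL_n$ force $X$ rational of dimension $n$) and the case $\kk(X)^{\Xi(\Tr_n)} \neq \kk$ (excluded by a divisibility argument in $\kk(Y)^\ast$, Proposition~\ref{Prop.notdivisible}).
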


Under the additional assumption that $\dim(X)\leq n$, this result has already been shown in \cite{Ca2014Morphisms-between-, CaXi2018Algebraic-actions-}. In \cite{CaReXi2023Families-of-commut} an analogous result is proved for the automorphism group of affine varieties: if $\Aut(X)$ is isomorphic to $\Aut(\AA^n)$ for an affine variety $X$, then $X$ is isomorphic to $\AA^n$
(see also \cite{ReSa2024Maximal-commutativ, LiReUr2023Characterization-o, KrReSa2021Is-the-affine-spac, kraft2017automorphism} for variations of this problem). 
Similar results are known to hold for diffeomorphism groups of smooth manifolds as well as homeomorphism groups of certain topological manifolds \cite{Fi1982Isomorphisms-betwe, whittaker1963isomorphic}. Let us note that for $n=2$ the group isomorphism between $\Bir(X)$ and $\Bir(\PP^n)$ is always given by conjugation with a birational map $X\dashrightarrow \PP^n$ up to field automorphisms of the base-field \cite{deserti2006automorphismes}. This is not the case anymore as soon as $n\geq 3$ \cite{Zi2023Rigid-birational-i}.

It is an interesting question, which other rationality properties, such as stable rationality, unirationality or rational connectedness, of a variety $X$ are encoded in the group structure of $\Bir(X)$.  Our techniques show that $\Bir(X)$ characterizes ruledness:

\begin{theorem}
	\label{thm:charmain2}
	Let $X, Y$ be irreducible varieties such that $\Bir(X)$ and $\Bir(\PP^1 \times Y)$ 
	are isomorphic. Then $X$ is birationally equivalent to $\PP^1 \times Z$ for some variety $Z$.
\end{theorem}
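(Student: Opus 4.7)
The strategy is to adapt the approach of Theorem~\ref{thm:charmain}: identify a group-theoretic feature of $\Bir(\PP^1\times Y)$ that encodes the presence of a $\PP^1$-fibration, and show that its transfer through the given isomorphism $\phi\colon\Bir(X)\simto\Bir(\PP^1\times Y)$ forces $X$ to be ruled. The natural candidate is the subgroup $G := \PGL_2(\CC(Y)) \subseteq \Bir(\PP^1\times Y)$ of fiberwise Möbius transformations along the projection to $Y$. As an abstract group, $G$ is isomorphic to $\PGL_2(\kk)$ for the field $\kk := \CC(Y)$ of transcendence degree $n-1$ over $\CC$ (where $n := \dim X$), and it comes equipped with a distinguished large abelian unipotent subgroup $(\kk,+)$ and a compatible split torus $\kk^\times$. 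By Borel--Tits rigidity for $\PGL_2$ over fields, the field $\kk$ is recoverable from the abstract group structure.

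The first step I would carry out is to give a purely group-theoretic characterization of the conjugacy class of such subgroups inside $\Bir(\PP^1\times Y)$ (for instance in terms of abstract isomorphism type, maximality, and the distinguished $(\kk^+,\kk^\times)$ substructure). Applying this characterization in $\Bir(X)$ via $\phi$ produces a subgroup $H := \phi^{-1}(G) \subseteq \Bir(X)$ of the same abstract type: isomorphic to $\PGL_2(\kk)$, with a distinguished abelian unipotent subgroup $U \subseteq H$ corresponding to $(\kk,+)$. The second and main step is to extract from $H$ a rational fibration of $X$ by $\PP^1$'s. Following the spirit of the regularization theorems used in \cite{CaXi2018Algebraic-actions-}, the plan is to show that $U$ can be realized as a genuine $\GG_a$-action on a smooth projective birational model of $X$. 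Once this is achieved, the $U$-orbits are rational curves, and the rational quotient $X \dashrightarrow X/U$ exhibits $X$ birationally as $\PP^1 \times Z$ over a variety $Z$ of dimension $n-1$, which is the desired conclusion.

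The principal obstacle is the regularization step: $H$ is obtained only as an abstract subgroup through $\phi$ and a priori need not act algebraically on any particular birational model of $X$. Matching the abstract commuting and unipotent structure carried by $H$ to a concrete algebraic action on a model requires tight control over algebraic and Borel subgroups of $\Bir(X)$. This is where the structural results on Borel subgroups announced in the abstract---the derived-length bound $\leq 2\dim X$, the equality case characterizing rationality, and the analysis of standard Borels---are expected to provide the decisive input, both to pick out the right unipotent one-parameter subgroup $U \subseteq H$ abstractly and to ensure that it can be conjugated into a genuine $\GG_a$-action on a suitable model of $X$.
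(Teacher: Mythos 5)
Your overall shape (locate a fibrewise group along the $\PP^1$-factor, transport it through the isomorphism, produce a unipotent element of $\Bir(X)$, and conclude via the rational quotient of a $\GG_a$-action) matches the paper's endgame: indeed, once a single non-identity unipotent element of $\Bir(X)$ is found, \cite[Theorem~1]{Ma1963On-algebraic-group} immediately gives $X$ birational to $\AA^1\times Z$. But the step you yourself flag as ``the principal obstacle'' is the entire content of the theorem, and the tools you propose for it do not suffice.

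The gap is this: the subgroup $U\subseteq H$ you obtain abstractly is isomorphic to $(\CC(Y),+)$, which as an abstract group is just a $\QQ$-vector space of continuum dimension --- it has no distinguished one-parameter subgroups, so there is nothing to ``pick out abstractly'' and then regularize. The real difficulty is to show that the image of this abstract group under $\phi^{-1}$ contains elements that are \emph{unipotent in $\Bir(X)$}, i.e.\ lie on an algebraic $\GG_a$-subgroup; unipotence is not an abstract group-theoretic property of an element, and the derived-length bounds for Borel subgroups do not detect it. Borel--Tits rigidity recovering the field $\CC(Y)$ is likewise beside the point, since the issue is how the transported subgroup sits topologically and algebraically inside $\Bir(X)$, not its abstract isomorphism type. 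The paper resolves this with a different mechanism: it works not with $\PGL_2(\CC(Y))$ but with the two commutative pieces $\rRepl(U)\cong(\CC(Y),+)$ and $\rRepl(D)\cong\CC(Y)^\ast$, proves these are \emph{self-centralizing} (Proposition~\ref{Prop.coincideswithitscentr}), so that their images under the isomorphism are automatically \emph{closed} subgroups of $\Bir(X)$; it then observes that $\rRepl(D)$ acts by conjugation on $\rRepl(U)$ with two orbits, so the transported pair satisfies the hypotheses of Theorem~\ref{Thm.Existence_unipot}, whose proof (via Rosenlicht quotients, passage to the geometric generic fibre, descent of unipotent elements in Proposition~\ref{Prop.Descent_Ga}, and the countability of $\Aut_{\textrm{alg.grp}}$ of groups without unipotents) manufactures the required unipotent element. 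Without the self-centralizing/closedness input and the descent machinery, your regularization step has no starting point.
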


For instance, we get from the non-ruledness of any smooth cubic threefold $X$
(see~\cite{ClGr1972The-intermediate-J}) that $\Bir(X)$ is non-isomorphic to 
$\Bir(\PP^1 \times Y)$ for any irreducible variety $Y$.

The \emph{Cremona group of rank $n$}, $\Bir(\PP^n)$, is often compared to linear groups. Indeed, $\Bir(\PP^n)$ satisfies several group theoretical properties typically satisfied by linear groups, such as the Jordan property 
 \cite{Se2009A-Minkowski-style-, Se2010The-Cremona-group-, PrSh2016Jordan-property-fo}, or the Tits alternative if $n=2$ \cite{Ca2011On-the-groups-of-b, Ur2021Subgroups-of-ellip}.  The groups $\Bir(X)$ for any variety $X$ can be equipped with the 
	\emph{Zariski topology} (Section~\ref{Sec.Notation_Preliminaries}), which makes them comparable to algebraic groups.
	An important tool to study linear algebraic groups are \emph{Borel subgroups}, i.e., maximal connected solvable subgroups with respect to the Zariski topology. In Section~\ref{sec.Borel}, we consider Borel subgroups of $\Bir(\PP^n)$ - a subject that has first been introduced in \cite{Po2017Borel-subgroups-of}.
In \cite{FuHe2023Borel-subgroups-of} the Borel subgroups of $\Bir(\PP^2)$ are completely classified. The \emph{standard Borel subgroup} $B_n$ of $\Bir(\PP^n)$ is the maximal connected solvable subgroup of derived length $2n$ consisting of transformations of the form 
\[
	(x_1,\dots, x_n)\dashmapsto (c_1x_1+d_1, \dots, c_n(x_1,\dots, x_{n-1})x_n+d(x_1,\dots, x_{n-1})),
\]
with respect to affine coordinates, where 
$c_i, d_i \in \CC(x_1, \ldots, x_{i-1})$ {and $c_i \neq 0$} (see \cite{Po2017Borel-subgroups-of} for details). We will prove the following result, which gives a new characterization of rational varieties among all varieties of dimension $\leq n$:

\begin{theorem}\label{thm:mainborel}
	Let $X$ be an irreducible variety of dimension $n$ and let $G\subset \Bir(X)$ be a 
	connected solvable subgroup. Then $G$ has derived length at most $2n$. 
	Moreover, if $G$ has derived length $2n$, then $X$ is birationally equivalent to $\PP^n$ and $G$ is conjugate to a subgroup of $B_n$. 
\end{theorem}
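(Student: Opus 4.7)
My plan is to prove the theorem by induction on $n = \dim X$. For the base case $n = 1$: if $X$ has positive genus then $\Bir(X) = \Aut(X)$ is either finite or an extension of a finite group by an elliptic curve, so any connected solvable subgroup is abelian and has derived length at most $1$. If $X \cong \PP^1$, then up to conjugacy every connected solvable subgroup of $\PGL_2$ is contained in $\Aff_1 = \GG_m \ltimes \GG_a = B_1$, which has derived length exactly $2$; this handles both assertions in dimension one.

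For the inductive step, the central construction is a $G$-invariant rational fibration $\pi \colon X \dashrightarrow Y$ with $\dim Y = n - 1$, one-dimensional generic fiber, and such that the image $\overline{G}$ of $G$ in $\Bir(Y)$ is again a connected solvable subgroup. Typically one produces such a fibration as the rational quotient by a carefully chosen normal connected algebraic subgroup of $G$ (or of a regularisation of the $G$-action on $X$); both the existence of such a subgroup and the good behaviour of the quotient should be extracted from the structure theory of algebraic subgroups of $\Bir(X)$ developed in the earlier sections. Writing $N$ for the kernel of the induced restriction homomorphism $G \to \Bir(Y)$, the short exact sequence $1 \to N \to G \to \overline{G} \to 1$ yields $\operatorname{dl}(G) \leq \operatorname{dl}(N) + \operatorname{dl}(\overline{G}) \leq 2 + 2(n-1) = 2n$: the bound on $\operatorname{dl}(\overline{G})$ is the inductive hypothesis, and the bound $\operatorname{dl}(N) \leq 2$ comes from the base case applied over the function field $L = \CC(Y)$, since $N$ embeds into $\Aut_L(F_\eta)$ for the generic fiber $F_\eta$, where connected solvable subgroups have derived length at most $2$, with equality forcing $F_\eta \cong \PP^1_L$ and $N$ conjugate into $\Aff_1(L)$.

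For the equality case $\operatorname{dl}(G) = 2n$, both inequalities above must be equalities, so $F_\eta \cong \PP^1_L$ and $X$ is birational to $Y \times \PP^1$. The inductive hypothesis applied to $\overline{G}$ then gives $Y$ birational to $\PP^{n-1}$ and $\overline{G}$ conjugate into $B_{n-1}$; thus $X$ is birational to $\PP^n$, and assembling the compatible birational coordinates realises $G$ inside the standard Borel $B_n = \Aff_1(\CC(x_1, \ldots, x_{n-1})) \ltimes B_{n-1}$. The main obstacle will be producing the $G$-invariant fibration and ensuring that passage to $\overline{G}$ preserves connectedness and solvability, as well as that the kernel $N$ (or at least its identity component) satisfies the claimed derived-length bound. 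This will require careful use of the earlier structural results on algebraic subgroups of $\Bir(X)$, the theory of regularisation of $G$-actions, and attention to how the Zariski topology on the potentially infinite-dimensional groups $\Bir(X)$ and $\Bir(Y)$ interacts with rational quotients.
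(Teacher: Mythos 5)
Your overall strategy (induction on $\dim X$, a $G$-invariant fibration, and the subadditivity $\len(G)\le\len(N)+\len(\overline{G})$) matches the skeleton of the paper's proof, and your base case is the same. But there is a genuine gap at the heart of the inductive step: you assume the existence of a $G$-invariant rational fibration $X\dashrightarrow Y$ with $\dim Y=n-1$ and one-dimensional generic fibre, and your bound $\len(N)\le 2$ depends entirely on that fibre being a curve. Nothing in the proposal produces such a fibration, and the paper does not produce one either. What the paper actually does is take the Rosenlicht quotient of $G_{k-1}=\overline{\D^{k-1}(G)}$, the closure of the last non-trivial derived subgroup. This quotient has two features your plan does not accommodate: (i) its fibre dimension $d$ can be anything with $0<d<n$, so the inequality must be run as $\len(G)\le\len(H^\circ)+\len(G/H^\circ)\le 2d+2(n-d)$, with the induction hypothesis applied \emph{both} to the image in $\Bir(Y)$ and to the pullback of $H^\circ$ to the geometric generic fibre over $K=\overline{\kk(Y)}$; and (ii) the quotient may not exist at all, namely when $\kk(X)^{G_{k-1}}=\kk$ (e.g.\ $G=\T_2\ltimes\Tr_2$ acting on $\AA^2$, where $\D^1(G)=\Tr_2$ has a dense orbit). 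In that case the paper uses a completely different argument: $G_{k-1}$ is then a self-centralizing algebraic subgroup of dimension $n$ (Corollary~\ref{Cor.Centralizer}), and the conjugation homomorphism $G\to\Aut_{\textrm{alg.grp}}(G_{k-1})$ together with Lemma~\ref{Lem.Autos_of_alg_grps} gives $\len(G)\le 1+n\le 2n$. Your proposal has no route through this case.

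The equality case is also underestimated. Even once one knows $\len(G)=2n$, the paper must first prove that the fibre dimension $d$ equals $1$ (via the bound $2d\le 1+d$ coming from the self-centralizing subgroup $\overline{\varepsilon(G_{2n-1})}$), then prove that $G_{2n-1}$ is unipotent using the descent result Proposition~\ref{Prop.Descent_Ga} and the uncountability of $\kk$, then invoke Matsumura's theorem to trivialize the fibration as $Y\times\AA^1$ (this is what rules out a non-trivial conic bundle/Severi--Brauer fibre, which your ``$F_\eta\cong\PP^1_L$'' asserts without justification), and finally use Lemma~\ref{Lem.Criterion_inside_Bn} to conjugate $G$ as a whole into $B_1(Y)\rtimes B_{n-1}$ --- in particular to exclude the possibility that $H^\circ$ sits in a non-split torus $T_f(Y)\subset\PGL_2(\kk(Y))$ rather than in the upper-triangular Borel, which is a real phenomenon over the non-closed field $\kk(Y)$ and is exactly what later produces the non-standard Borel subgroups of Corollary~\ref{Cor.popov}. ``Assembling the compatible birational coordinates'' conceals all of this work.
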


Theorem~\ref{thm:mainborel} has the following corollary, which answers a question of Popov \cite{Po2017Borel-subgroups-of} (see \cite{FuHe2023Borel-subgroups-of} for a proof for $n=2$):

\begin{corollary}
	\label{Cor.popov}
	For every $n \geq 2$, the Cremona group $\Bir(\PP^n)$ admits Borel subgroups
	of distinct derived length. In particular, $\Bir(\PP^n)$ contains Borel subgroups 
	that are non-conjugate to the standard Borel subgroup.
\end{corollary}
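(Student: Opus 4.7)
The plan is to deduce the corollary from Theorem~\ref{thm:mainborel} combined with the construction, carried out later in the paper, of a single non-standard Borel subgroup of $\Bir(\PP^n)$ for each $n \geq 2$.

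The first step is to establish the dichotomy: a Borel subgroup $B' \subset \Bir(\PP^n)$ has derived length $2n$ if and only if $B'$ is conjugate to $B_n$. If $B'$ has derived length $2n$, then by Theorem~\ref{thm:mainborel} we have $B' \subset g B_n g^{-1}$ for some $g \in \Bir(\PP^n)$, and since $B'$ is maximal connected solvable while $g B_n g^{-1}$ is itself connected solvable, the inclusion is forced to be an equality. The converse holds because $B_n$ is by definition maximal connected solvable of derived length $2n$, and Theorem~\ref{thm:mainborel} prevents any connected solvable enlargement from exceeding derived length $2n$.

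Given this dichotomy, the corollary reduces to exhibiting one Borel subgroup of $\Bir(\PP^n)$ not conjugate to $B_n$; by Zorn's lemma applied to the poset of connected solvable subgroups, this in turn reduces to producing a single connected solvable subgroup $H \subset \Bir(\PP^n)$ that cannot be conjugated into $B_n$. Any Borel containing such an $H$ is then non-standard, and by the dichotomy above it automatically has derived length strictly less than $2n$. Comparing with $B_n$, which has derived length exactly $2n$, yields Borels of distinct derived lengths, and in particular two non-conjugate Borels, giving both assertions of the corollary.

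The main obstacle is the construction of such an $H$ for every $n \geq 2$. For $n = 2$ this is already provided by the classification of Furter and Hedden~\cite{FuHe2023Borel-subgroups-of}, which exhibits a Borel of derived length $3$ rather than $4$. For $n \geq 3$ I would expect to use a solvable action on $\PP^n$ whose preserved geometric structure is incompatible with the chain of $\PP^1$-fibrations underlying $B_n$, for instance one coming from a fibration with a higher-dimensional fiber or from an exotic infinite-dimensional abelian normal subgroup. The hardest part is certifying the non-conjugacy, that is, showing that the intrinsic invariants attached to $H$, such as preserved fibrations, centralizers, or normal subgroup structure, cannot be realized inside $B_n$; this verification is the technical heart of the argument and is postponed to the dedicated section on Borel subgroups.
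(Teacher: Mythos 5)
Your reduction is sound and coincides with the paper's: by Theorem~\ref{thm:mainborel} (equivalently Corollary~\ref{Cor.Standard_Borel}) a Borel subgroup of derived length $2n$ is conjugate to $B_n$, so it suffices to produce one connected solvable subgroup $H$ of $\Bir(\PP^n)$ that cannot be conjugated into $B_n$, and then to take a Borel subgroup containing it via Zorn's lemma (Corollary~\ref{Cor.Solvable_contained_in_Borel}). However, the construction of $H$ and the certification of non-conjugacy — which you yourself flag as "the technical heart" and postpone — are precisely the content of the corollary's proof, so as it stands your argument has a genuine gap for all $n\geq 3$ (and even for $n=2$ you only outsource to the literature rather than prove anything).

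For the record, the paper fills this gap as follows. Take $F\in\kk[x_0,\dots,x_{n-1}]$ homogeneous irreducible of degree $2n$ defining a smooth hypersurface, set $f=F(1,x_1,\dots,x_{n-1})$, and let $H=T_f(\AA^{n-1})$ be the commutative group of Example~\ref{Exa.Borel_Dim_1}, whose non-identity elements $\varphi_{a,b}$ fix pointwise the hypersurface $x_n^2=f$. Viewed in $\PP^n$, the closure of this fixed hypersurface is birational to a double cover of $\PP^{n-1}$ branched along $V(F)$, which has $2K_X=0$ and hence is \emph{not ruled}. By contrast, Lemma~\ref{Lem.Fixed_locus_std_Borel_elements} shows that every hypersurface in the closure of the fixed locus of a non-identity element of $B_n$ \emph{is} ruled (the fixed locus is cut out by an equation of degree at most one in some coordinate $x_k$). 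Non-conjugacy of any $\varphi_{a,b}\neq\id$ into $B_n$ then follows from the weak factorization theorem: a conjugating map $\eta$ can only contract ruled hypersurfaces, so it cannot contract the non-ruled fixed hypersurface, whose strict transform would then be a non-ruled hypersurface fixed by an element of $B_n$, a contradiction. Your suggested invariants (preserved fibrations, centralizers, normal subgroup structure) point in a different direction and are not obviously conjugation-invariant at the level needed; the ruledness of fixed hypersurfaces, made birationally robust via weak factorization, is the decisive idea you are missing.
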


Analogously, we define \emph{Borel subgroups} of the group of polynomial automorphisms
$\Aut(\AA^n)$. The \emph{standard Borel subgroup of $\Aut(\AA^n)$} is $B_n\cap\Aut(\AA^n)$, and has derived length $n+1$  (see \cite{FuPo2018On-the-maximality-}). 
{For $n \leq 2$, all Borel subgroups of $\Aut(\AA^2)$ are conjugate to the standard Borel subgroup
\cite{BeEsEs2016Dixmier-groups-and}. However, with Theorem~\ref{thm:mainborel} we show the 
following corollary, which solves 
\cite[Problem~3]{FuPo2018On-the-maximality-}:}

\begin{corollary}
	\label{Cor.FurterPoloni}
	For $n\geq 3$, the group of polynomial automorphisms $\Aut(\AA^n)$ admits Borel subgroups that are non-conjugate to the standard Borel subgroup.
\end{corollary}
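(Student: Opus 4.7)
The plan is to use Theorem~\ref{thm:mainborel} to produce, for each $n \geq 3$, a Borel subgroup of $\Aut(\AA^n)$ whose derived length differs from $n+1$---the derived length of the standard Borel $B_n \cap \Aut(\AA^n)$ according to \cite{FuPo2018On-the-maximality-}. Since derived length is conjugation-invariant, any such Borel is automatically non-conjugate to the standard one.

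Concretely, I would aim to exhibit a connected solvable subgroup $G \subset \Aut(\AA^n)$ whose derived length strictly exceeds $n+1$. Applied to the inclusion $\Aut(\AA^n) \subset \Bir(\PP^n)$, Theorem~\ref{thm:mainborel} bounds the derived length of $G$ by $2n$, so for $n \geq 3$ the target range $(n+1, 2n]$ is non-empty. A Zorn's lemma argument on the poset of connected solvable subgroups of $\Aut(\AA^n)$ containing $G$ (the union of a chain being again connected solvable thanks to the uniform bound $2n$ on derived length) then yields a Borel subgroup $B$ of $\Aut(\AA^n)$ with derived length at least that of $G$, which cannot be conjugate to the standard Borel.

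For the construction of $G$, I would work inside $\Bir(\PP^n)$: starting from the standard Borel $B_n$ (of derived length $2n$), I would choose a birational map $\varphi \in \Bir(\PP^n) \setminus \Aut(\AA^n)$ so that the intersection $\varphi B_n \varphi^{-1} \cap \Aut(\AA^n)$ retains enough of the triangular structure to have derived length exceeding $n+1$. Natural sources of such $\varphi$ are non-standard polynomial $\AA^1$-fibration towers on $\AA^n$---for $n \geq 3$ these are abundant, arising for instance from exotic $\GG_a$-actions---and they produce triangular flags genuinely distinct from the standard linear one. The resulting solvable group sits inside $\Aut(\AA^n)$ but is modeled on more of the richer structure of $B_n$ than the ordinary intersection $B_n \cap \Aut(\AA^n)$.

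The main obstacle is the explicit verification of the derived length of $G$: in a non-standard triangular subgroup of $\Aut(\AA^n)$ iterated commutators can produce polynomial expressions of unexpected form, and one must simultaneously check that $G$ remains solvable at each level of the derived series and that the series genuinely reaches depth greater than $n+1$ before terminating. Here Theorem~\ref{thm:mainborel} again plays a guiding role, as it certifies that whatever $G$ one writes down, the derived series must terminate by depth $2n$, so the computation is guaranteed to be finite.
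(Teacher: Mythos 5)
Your overall strategy---find a connected solvable subgroup of $\Aut(\AA^n)$ whose derived length exceeds $n+1$, then saturate it to a Borel subgroup via Zorn's lemma---has a genuine gap at its core: the required subgroup is never constructed, and there is good reason to doubt that it exists at all. The heuristic that the range $(n+1,2n]$ is non-empty proves nothing: for $n=2$ this range is also non-empty, yet by \cite{BeEsEs2016Dixmier-groups-and} every Borel subgroup of $\Aut(\AA^2)$ is conjugate to the standard one, so by Corollary~\ref{Cor.Solvable_contained_in_Borel} \emph{every} connected solvable subgroup of $\Aut(\AA^2)$ has derived length at most $3=n+1$, strictly below the Cremona bound $2n=4$. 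Nothing in your sketch explains why $n\geq 3$ should behave differently; the proposed source of examples (``intersect a birational conjugate $\varphi B_n\varphi^{-1}$ with $\Aut(\AA^n)$'') comes with no verification that the intersection is connected, closed, or of derived length larger than $n+1$, and you yourself flag the derived-length computation as an unresolved obstacle. Theorem~\ref{thm:mainborel} only gives an upper bound $2n$ and cannot certify that any candidate actually exceeds $n+1$. In short, the conjugation invariant you chose (derived length) may well fail to separate any Borel subgroup of $\Aut(\AA^n)$ from the standard one.

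The paper avoids derived length entirely and uses a different conjugation invariant: the geometry of pointwise-fixed hypersurfaces. It takes the Bass-type non-triangulable $\GG_a$-action on $\AA^n$ (for $f=x_1x_3+x_2^2+x_4^2+\cdots+x_n^2$), whose every non-trivial element fixes pointwise the hypersurface $\{f=0\}$, an affine cone with exactly one singular point. By the argument of Lemma~\ref{Lem.Fixed_locus_std_Borel_elements}, every hypersurface fixed pointwise by a non-trivial element of $\Aut(\AA^n)\cap B_n$ is isomorphic to $\AA^1\times Y$, hence has no isolated singularity. So no non-trivial element of this $\GG_a$ is conjugate into the standard Borel, and any Borel subgroup containing it (which exists by Corollary~\ref{Cor.Solvable_contained_in_Borel}) is non-conjugate to $\Aut(\AA^n)\cap B_n$. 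If you want to salvage your approach, you would need either to exhibit explicitly a connected solvable subgroup of $\Aut(\AA^n)$ of derived length $>n+1$ (an open-looking problem) or to switch to an element-level invariant such as the one the paper uses.
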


\subsection*{Outline of the proofs and structure of the article}
Throughout the paper we use results about the Zariski topology on $\Bir(X)$ and algebraic
families of birational transformations of $X$
from \cite{ReUrSa2024The-Structure-of-A},
which we recall in Section~\ref{Sec.Notation_Preliminaries}. 
Let us point out some new ingredients that go into the proofs of Theorem~\ref{thm:charmain},
Theorem~\ref{thm:charmain2} and Theorem~\ref{thm:mainborel}. 

A first one is a generalization of the celebrated theorem of Rosenlicht (Theorem~\ref{Thm.Rosenlicht}), which will be developed in Section~\ref{sec.Rosenlicht}. Recall that Rosenlicht's theorem states that if an algebraic group $G$ acts regularly on a variety $X$, then there exists a $G$-stable open dense $U\subseteq X$ admitting a geometric quotient \cite{Ro1956Some-basic-theorem}. We show that a similar result holds if we replace $G$ by a 
subgroup of $\Bir(X)$ generated by an irreducible algebraic family of birational maps 
$A\subset \Bir(X)$ that contains the identity
(Theorem~\ref{Thm.Rosenlicht}). 
While this is an interesting result on its own, it allows us to do certain induction steps by passing to the geometric generic fibre of this geometric quotient for certain families of birational transformations.

Another new ingredient is a result about the descent of unipotent elements
(Section~\ref{sec.Descent}). 
An element in $\Bir(X)$ is \emph{unipotent} if it is contained in a unipotent algebraic subgroup of $\Bir(X)$. We show that if an element $f\in\Bir(X)$ preserves a fibration $\pi\colon X\to Y$ such that the induced birational map on the geometric generic fibre of $\pi$ is unipotent, then $f$ is unipotent itself (Proposition~\ref{Prop.Descent_Ga}).

With this, we prove Theorem~\ref{thm:mainborel}
and the Corollaries~\ref{Cor.popov}, \ref{Cor.FurterPoloni}
in Section~\ref{sec.Borel}. For the proof of Theorem~\ref{thm:charmain} and Theorem~\ref{thm:charmain2} in Section~\ref{sec.Char} we
establish the following result
(Theorem~\ref{Thm.Existence_unipot}):
Let $G, H \subset \Bir(X)$ be closed connected commutative non-trivial
subgroups. Assume that $G$ normalizes $H$ and that $G$ acts by conjugation on $H$ ``essentially transitively''. Then the centralizer of  $H$ in $\Bir(X)$
contains a unipotent element. We apply this result to the connected components of the images 
of the standard torus and the subgroup of translations in $\Bir(\PP^n)$ under a 
group isomorphism $\Bir(\PP^n) \to \Bir(X)$. 

We state and prove all the results in the slightly more general setting of an algebraically closed uncountable base-field $\kk$ of characteristic $0$.  The uncountability of the base field is crucial in our proof. Indeed, we frequently use that the neutral component $G^\circ$ 
of a closed subgroup 
$G \subseteq \Bir(X)$ has countable index in $G$ and conclude with the uncountability of $G$ that $G^\circ$ is non-trivial. In particular, we apply this fact to the above described images in $\Bir(X)$
of the standard torus and the subgroup of translations.

\subsection*{Acknowledgements} We would like to thank J\'er\'emy Blanc, Michel Brion, and Hanspeter Kraft for many interesting discussions on the subject.
The first author is supported by DFG, project number 509752046.

\section{Notion and preliminary results}
\label{Sec.Notation_Preliminaries}

Throughout the article, all varieties, morphisms, and rational maps are defined over an algebraically closed
field $\kk$, $X$ is an irreducible variety over $\kk$, and $\Bir(X)$ its group of birational transformations. Here, a variety is a (not necessarily irreducible) reduced, separated scheme of finite type over $\kk$. Starting from Section~\ref{sec.Descent}, 
$\kk$ will be of 
characteristic zero and starting from Section~\ref{sec.Borel}, 
$\kk$ will be of characteristic zero and uncountable.

\subsection{Preliminary results on $\Bir(X)$}
We frequently use notions and results from our article~\cite{ReUrSa2024The-Structure-of-A}.
Let us briefly recall the most important ones.  For a birational map $\varphi\colon X\dashrightarrow Y$ we denote by $\lociso(\varphi)$ the open subset of $X$, where $\varphi$ is a local isomorphism. An \emph{algebraic family of birational transformations of $X$ pa\-ra\-met\-rized by a variety $V$}
is a $V$-birational transformation 
\[
	\theta \colon V \times X \dashrightarrow V \times X 
\]
such that $\lociso(\theta)$
surjects onto $V$. Such an algebraic family induces a map
$\rho_{\theta} \colon V \to \Bir(X)$, which we call a \emph{morphism to $\Bir(X)$}
and the image of a morphism we call an \emph{algebraic subset} of $\Bir(X)$.
The \emph{Zariski topology} on
$\Bir(X)$ is the finest topology such that all morphisms are continuous.
If $S \subseteq \Bir(X)$ is an algebraic subset, then its closure is algebraic as well,
see~\cite[Corollary~4.6]{ReUrSa2024The-Structure-of-A}. Images of
constructible subsets under morphisms to $\Bir(X)$ are again constructible 
\cite[Theorem~1.2]{ReUrSa2024The-Structure-of-A}.
With this topology, $\Bir(X) \times \Bir(X)$ is a closed subgroup of $\Bir(X \times X)$ and multiplication as well as taking the inverse are continuous maps. Moreover, 
$\Bir(X)$ is the union of an increasing chain of closed algebraic 
subsets~\cite[Theorem~1.1]{ReUrSa2024The-Structure-of-A}
and hence, the same holds for any closed subset of $\Bir(X)$.

For $S \subseteq \Bir(X)$, a subset $Z \subseteq X$ is called \emph{$S$-stable} if $s(z)\in Z$ for all $s\in S$ and all $x\in Z \cap \lociso(s)$.
If $x \in X$, then the $S$-orbit of $x$ is defined by
\[
	S x \coloneqq \set{s(x)}{\textrm{$s \in S$ such that $x \in \lociso(s)$}} \, .
\]
Note that $Sx$ can be the empty set.
If $S$ is a subgroup, then any two $S$-orbits are either disjoint or they coincide and the 
$S$-stable subsets are the unions of $S$-orbits. Moreover, in this case the singular locus of $X$ is 
$S$-stable.

For a subset $S \subseteq \Bir(X)$ we define its \emph{dimension} $\dim(S)$ as the supremum of
$\dim(V)$ over all injective morphisms $V \to \Bir(X)$ with image in $S$. 
For a closed algebraic subset of $\Bir(X)$ its dimension coincides with the Krull-dimension
induced from the topology on $\Bir(X)$, see~\cite[Theorem~1.4]{ReUrSa2024The-Structure-of-A}. 
For any morphism $\rho \colon V \to \Bir(X)$ from an irreducible variety $V$ 
we have the following \emph{fibre-dimension formula} (see~\cite[Theorem~1.5]{ReUrSa2024The-Structure-of-A}): There exists an open dense subset $U \subseteq V$
such that
\[
	\dim_u (U \cap \rho^{-1}(\rho(u))) = 
	\dim V - \dim \overline{\rho(V)} 
	\quad \quad \textrm{for all $u \in U$} \, ,
\]
where $\dim_u$ denotes the local dimension at $u$.

A subgroup of $\Bir(X)$ is called \emph{algebraic}, if it is the image of a morphism
from an algebraic group and the morphism is a group homomorphism. Note that algebraic
subgroups are always closed in $\Bir(X)$ \cite[Corollary~5.12]{ReUrSa2024The-Structure-of-A}.
Moreover, any closed finite-dimensional connected subgroup $G$ of $\Bir(X)$ has a unique structure
of an algebraic group such that
for any algebraic group $H$, the algebraic group homomorphisms $H \to G$ correspond 
bijectively to the morphisms $H \to \Bir(X)$ that are group homomorphisms and have
their image in $G$ \cite[Theorem~1.3]{ReUrSa2024The-Structure-of-A}.

A closed subgroup $G \subseteq \Bir(X)$ can always be filtered by 
closed algebraic subsets $G_1 \subseteq G_2 \subseteq \cdots$ of $\Bir(X)$ 
such that the irreducible components of $G_i$ are pairwise disjoint and homeomorphic;
if $G$ is also connected, then the $G_i$ can be chosen to be 
irreducible~\cite[Corollary~4.12]{ReUrSa2024The-Structure-of-A}.
Moreover, for a closed connected subgroup $G$ of $\Bir(X)$, its connected component $G^\circ$
of the neutral element has countable index in 
$G$ and is open (thus closed) in $G$~\cite[Corollary~4.11]{ReUrSa2024The-Structure-of-A}.

Following~\cite[Definition~1, p. 514]{De1970Sous-groupes-algeb},
for an algebraic group $G$ a rational map 
$\alpha \colon G \times X \dashrightarrow X$ is called a
\emph{rational $G$-action} if $\theta \colon G \times X \dashrightarrow G \times X$, 
$(g, x) \dashmapsto (g, \alpha(g, x))$ is dominant and the following diagram commutes:
\[
	\xymatrix@=15pt{
		G \times G \times X \ar@{-->}[d]_-{\id_G \times \alpha} \ar[rrrr]^-{(g_1, g_2, x) \mapsto (g_1 g_2, x)} 
		&&&& G \times X \ar@{-->}[d]^-{\alpha} \\
		G \times X \ar@{-->}[rrrr]^-{\alpha} &&&& X \, .
	}
\]
The rational map $\theta$ is then an algebraic family parametrized by $G$
and the induced morphism $\rho_{\theta} \colon G \to \Bir(X)$ is a group homomorphism, 
see \cite[Corollary~3.3]{ReUrSa2024The-Structure-of-A}.
By abuse of notation, we sometimes write $\rho_\alpha$ instead of $\rho_{\theta}$.
We denote by $\GG_a$ and $\GG_m$ the algebraic groups of the underlying additive and multiplicative
group of the ground field $\kk$, respectively. An element in $\Bir(X)$ that is contained
in the image of a $\GG_a$-action is called \emph{unipotent} and an algebraic subgroup
of $\Bir(X)$ is called \emph{unipotent} if every element is unipotent.

In this paragraph, we assume that the characteristic of $\kk$ is zero.
We recall two continuity statements that can be found 
in~\cite[Theorem~1.6]{ReUrSa2024The-Structure-of-A}.
Let $\pi \colon X \to Y$ be a dominant morphism of irreducible varieties.
We denote by $\Bir(X,\pi)$ the subgroup of elements in $\Bir(X)$
that induce a birational map on $Y$ and by $\Bir(X/Y)$ the subgroup
of elements that induce the identity on $Y$. Then, the natural homomorphism 
$\Bir(X,\pi) \to \Bir(Y)$ is continuous. 
Let $K$ be an algebraic closure of the function field $\kk(Y)$. We denote by $X_K$
the geometric generic fibre of $\pi \colon X \to Y$. 
If $X_K$ is an irreducible $K$-variety (which happens e.g. if $\kk(Y)$ is algebraically closed in $\kk(X)$ and $\car(\kk) = 0$), 
then the natural pull-back homomorphism
\[
	\Bir(X/Y) \to \Bir_K(X_K) \, , \quad \varphi \mapsto \varphi_K
\]
is continuous, where $\Bir_K(X_K)$ denotes the group of $K$-birational transformations
of $X_K$. Moreover, for any subset $S \subseteq \Bir(X/Y)$ we have
\begin{equation}
	\label{Eq.invariants}
	K(X_K)^{\overline{\varepsilon(S)}} = \kk(X)^S \otimes_{\kk(Y)} K \, ,
\end{equation}
where we used the fact that the function field $K(X_K)$ of $X_K$ over $K$ is given by
$\kk(X) \otimes_{\kk(Y)} K$ \cite[Ch. 3, Corollary~2.14(c)]{Li2002Algebraic-geometry} 
and the fact that the  invariant field with respect to a subset of birational
transformations does not change if we take the closure of that subset 
\cite[Corollary~7.4]{ReUrSa2024The-Structure-of-A}.

\subsection{Preliminary results on algebraic groups}

Let $H$ be a connected algebraic group. Then 
$H$ contains a unique normal connected affine algebraic subgroup
$H_{\aff}$ that contains every connected affine algebraic subgroup of $H$, 
see \cite[Theorem 16, p. 439]{Ro1956Some-basic-theorem}. Moreover, the smallest normal closed subgroup  $H_{\ant}$ of $H$ such that $H / H_{\ant}$
is affine satisfies $H_{\aff} H_{\ant} = H$, see~\cite{Br2009Anti-affine-algebr}.
If $H$ is commutative, then the unipotent elements in $H$ form a closed subgroup that is 
contained in $H_{\aff}$, see~\cite[Theorem~15.5]{Hu1975Linear-algebraic-g}.

We denote by $\Aut_{\textrm{alg.grp}}(H)$
the group of algebraic group automorphisms of $H$.

\begin{lemma}
	\label{Lem.Structure_Aut_alg.grp}
	Assume that $H$ is commutative. Then $\Aut_{\textrm{alg.grp}}(H)$
	contains a countable index subgroup that is isomorphic to a subgroup of $\GL_n$, 
	where $n$ is the dimension
	of the subgroup of unipotent elements of $H$.
	In particular, if $\Aut_{\textrm{alg.grp}}(H)$ is uncountable, 
	then $H$ contains non-trivial unipotent elements.
\end{lemma}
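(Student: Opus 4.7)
The plan is to decompose $H$ into its standard Chevalley building blocks and analyse the action of an automorphism on each part. By Chevalley's structure theorem, $H$ fits into a short exact sequence
\[
0 \to H_{\aff} \to H \to A \to 0
\]
with $A$ an abelian variety, and as recalled above, the commutative affine group $H_{\aff}$ splits as a direct product $H_{\aff} = T \times U$, where $T$ is a torus of some dimension $m$ and $U$ is the subgroup of unipotent elements, of dimension $n$. Since $H_{\aff} \subseteq H$ is the unique maximal connected affine subgroup, and $T, U \subseteq H_{\aff}$ are canonically the semisimple and unipotent parts respectively, every $\varphi \in \Aut_{\textrm{alg.grp}}(H)$ preserves all three and induces an algebraic group automorphism of $A = H / H_{\aff}$. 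I would then start from the natural homomorphism
\[
r \colon \Aut_{\textrm{alg.grp}}(H) \longrightarrow \Aut_{\textrm{alg.grp}}(T) \times \Aut_{\textrm{alg.grp}}(U) \times \Aut_{\textrm{alg.grp}}(A).
\]

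The key step is to prove that $r$ is injective. If $\varphi \in \ker(r)$, I would consider $\psi \colon H \to H$, $h \mapsto \varphi(h) h^{-1}$; since $H$ is commutative, $\psi$ is an algebraic group homomorphism. Because $\varphi$ acts trivially on $A$, $\psi$ takes values in $H_{\aff}$, and because $\varphi|_{H_{\aff}} = \id$, $\psi$ vanishes on $H_{\aff}$, so it descends to a group homomorphism $\bar\psi \colon A \to H_{\aff}$. Any such map is trivial: its image is proper (being the image of the complete variety $A$) and contained in an affine group, hence finite, and connected, hence a single point, necessarily the identity. Thus $\varphi = \id_H$. Next, I would use that $\Aut_{\textrm{alg.grp}}(T) \cong \GL_m(\mathbb{Z})$ is countable and that $\Aut_{\textrm{alg.grp}}(A)$ embeds into the finitely generated $\mathbb{Z}$-module $\operatorname{End}(A)$, hence is countable as well. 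The preimage $\Gamma$ under $r$ of $\{\id_T\} \times \Aut_{\textrm{alg.grp}}(U) \times \{\id_A\}$ is therefore a subgroup of countable index in $\Aut_{\textrm{alg.grp}}(H)$, and by the injectivity just established, $r$ identifies $\Gamma$ with a subgroup of $\Aut_{\textrm{alg.grp}}(U)$.

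To conclude, in characteristic zero every commutative connected unipotent group of dimension $n$ is isomorphic to $\GG_a^n$ via $\exp$/$\log$, so $\Aut_{\textrm{alg.grp}}(U) = \GL_n(\kk)$, yielding the desired embedding $\Gamma \hookrightarrow \GL_n$. For the ``in particular'' part, if $n = 0$ then $\Gamma$ is trivial, so $\Aut_{\textrm{alg.grp}}(H)$ is a countable extension of the trivial group and thus countable itself, contradicting the uncountability hypothesis. I expect the main obstacle to be the injectivity of $r$---specifically the vanishing of group homomorphisms $A \to H_{\aff}$---which is the one non-formal ingredient and depends essentially on the commutativity of $H$: without it, $\psi$ would fail to be a homomorphism and the descent argument would break down.
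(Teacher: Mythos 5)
Your proof is correct, but it takes a genuinely different route from the paper in its first (and only non-formal) step. The paper works with the decomposition $H = H_{\aff} H_{\ant}$ from the theory of anti-affine groups: since both factors are characteristic and generate $H$, the restriction map $\Aut_{\textrm{alg.grp}}(H) \to \Aut_{\textrm{alg.grp}}(H_{\aff}) \times \Aut_{\textrm{alg.grp}}(H_{\ant})$ is injective for trivial reasons, and the countability of $\Aut_{\textrm{alg.grp}}(H_{\ant})$ is quoted from Brion. You instead use the Chevalley extension $0 \to H_{\aff} \to H \to A \to 0$, so your injectivity is not formal: it requires the twist $\psi(h) = \varphi(h)h^{-1}$ (a homomorphism precisely because $H$ is commutative) descending to a map $A \to H_{\aff}$ that vanishes because a connected proper subgroup of an affine group is trivial; and you replace Brion's lemma by the classical fact that $\operatorname{End}(A)$ is a finitely generated $\ZZ$-module. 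Both arguments then agree on the second half (splitting $H_{\aff}$ into a torus and the unipotent part, with $\Aut_{\textrm{alg.grp}}$ of the former countable and of the latter equal to $\GL_n(\kk)$ via $\exp$/$\log$ in characteristic zero). What your version buys is independence from the structure theory of anti-affine groups, at the cost of the extra descent argument and an essential use of commutativity at the injectivity step, which the paper's injection does not need (though the paper, too, needs commutativity to split $H_{\aff}$). Both proofs are complete and of comparable length.
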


\begin{proof}
	We get an injection
	$\Aut_{\textrm{alg.grp}}(H) \hookrightarrow \Aut_{\textrm{alg.grp}}(H_{\aff}) \times 
	\Aut_{\textrm{alg.grp}}(H_{\ant})$. As the group $\Aut_{\textrm{alg.grp}}(H_{\ant})$ 
	is countable \cite[Lemma~1.5]{Br2009Anti-affine-algebr},
	we only have to consider the group $\Aut_{\textrm{alg.grp}}(H_{\aff})$.
	Let $H_{\aff, s}$ and $H_{\aff, u}$ be the subgroups of semi-simple and unipotent elements of $H_{\aff}$,
	respectively \cite[Theorem~15.5]{Hu1975Linear-algebraic-g}. 
	Then  $\Aut_{\textrm{alg.grp}}(H_{\aff})$ is the product of
	the algebraic group automorphisms of $H_{\aff, s}$ and $H_{\aff, u}$. Note that
	$\Aut_{\textrm{alg.grp}}(H_{\aff, s})$ is countable and $\Aut_{\textrm{alg.grp}}(H_{\aff, u})$
	is isomorphic to $\GL_n$, where $n = \dim H_{\aff, u}$. This implies the statement.
\end{proof}

\section{\texorpdfstring{Rosenlicht quotients for subgroups of $\Bir(X)$}{Rosenlicht quotients for subgroups of Bir(X)}}
\label{sec.Rosenlicht}

In this section, $\kk$ denotes an algebraically closed field, we do not impose any restriction on the
characteristic or the cardinality.
The goal of this section is to prove a generalization of the classical Rosenlicht theorem
for certain subgroups of $\Bir(X)$
and to derive some consequences of it, in particular to commutative subgroup with trivial
invariant field, see Corollary~\ref{Cor.Centralizer}. 
Our proof is inspired by~\cite[\S14.5]{FeRi2017Actions-and-invari}.

\begin{definition}
	Let $G \subseteq \Bir(X)$ be a subgroup. A morphism $\pi \colon X \to Y$ is called
	a \emph{geometric quotient} for $G$, if $\pi$ is surjective, open, its fibres
	are $G$-orbits, and for every open subset $U \subseteq Y$ the morphism $\pi$
	induces an isomorphism $\OO_Y(U) \xrightarrow{\sim} \OO_X(\pi^{-1}(U))^G$
	to the $G$-invariant functions on $\pi^{-1}(U)$.
	A morphism $\pi_0 \colon X_0 \to Y$ is called \emph{Rosenlicht quotient} for $G$, if
	$X_0$ is an open, dense $G$-stable subset of $X$ and $\pi_0$ is a geometric quotient for $G$.
\end{definition}

The last property of a geometric quotient for $G$ is equivalent to $\kk(Y) = \kk(X)^G$
provided that $Y$ is normal:

\begin{lemma}
	\label{Lem.geometric_quotient}
	Let $G \subseteq \Bir(X)$ be a subgroup and let $\pi \colon X \to Y$ be a
	surjective, open morphism to a normal variety $Y$ 
	such that its fibres are $G$-orbits.  Then $\pi$
	is a geometric quotient for $G$ if and only if $\kk(Y) = \kk(X)^G$.
\end{lemma}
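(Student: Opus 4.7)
The plan is to treat the two implications separately, with the forward direction being essentially a limit argument and the backward direction relying crucially on the normality of $Y$ via a valuation-theoretic argument.

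For the forward direction, suppose $\pi$ is a geometric quotient. The inclusion $\pi^{\ast}\kk(Y) \subseteq \kk(X)^G$ is immediate since $\pi^{\ast}$ lands in invariants (fibres of $\pi$ being $G$-orbits). For the converse inclusion, take $f \in \kk(X)^G$ with maximal domain of regularity $V \subseteq X$. I would first show $V$ is $G$-stable: if $x \in V \cap \lociso(g)$ for $g \in G$, then $g$ induces an isomorphism of local rings $\OO_{X, g(x)} \xrightarrow{\sim} \OO_{X, x}$, and since $f = f \circ g$ is regular at $x$, the function $f$ is regular at $g(x)$, so $g(x) \in V$. Because $V$ is a union of $G$-orbits, i.e., of fibres of $\pi$, we have $V = \pi^{-1}(\pi(V))$, and since $\pi$ is open, $U \coloneqq \pi(V)$ is open in $Y$. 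Then $f \in \OO_X(\pi^{-1}(U))^G = \OO_Y(U) \subseteq \kk(Y)$ by the geometric quotient hypothesis.

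For the backward direction, fix an open $U \subseteq Y$. Injectivity of $\pi^{\ast} \colon \OO_Y(U) \to \OO_X(\pi^{-1}(U))^G$ is clear by dominance, and the image lies in invariants. For surjectivity, let $f \in \OO_X(\pi^{-1}(U))^G \subseteq \kk(X)^G = \kk(Y)$, so $f = \pi^{\ast}\tilde{f}$ for a unique $\tilde{f} \in \kk(Y)$. To show $\tilde{f} \in \OO_Y(U)$, normality of $Y$ (``algebraic Hartogs'') reduces the problem to showing $v_D(\tilde{f}) \geq 0$ for every prime divisor $D \subseteq Y$ meeting $U$. Fixing such a $D$, I would construct a prime divisor $D' \subseteq X$ with $\pi(D')$ dense in $D$ and $D' \cap \pi^{-1}(U) \neq \emptyset$ as follows: since $Y$ is normal, its singular locus has codimension $\geq 2$, so $D \cap Y^{\mathrm{reg}}$ is a Cartier divisor on $Y^{\mathrm{reg}}$ meeting $U$; then $\pi^{-1}(D \cap Y^{\mathrm{reg}}) \subseteq \pi^{-1}(Y^{\mathrm{reg}})$ is a Cartier divisor, and the fibre-dimension formula applied to $\pi$ forces at least one irreducible component $W$ to dominate $D$. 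Since $\pi(W)$ is dense in $D$ and $U \cap D$ is a nonempty open of $D$, we have $\pi(W) \cap U \neq \emptyset$, so $W \cap \pi^{-1}(U) \neq \emptyset$, and $D' \coloneqq \overline{W}$ does the job.

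To conclude, I would compare the valuations $v_{D'}$ and $v_D$: the identity $\OO_{X, D'} \cap \kk(Y) = \OO_{Y, D}$ (one direction because $\pi(D')$ dense in $D$ means the center of $v_{D'}$ on $Y$ is $\eta_D$; the other because any $h \in \kk(Y) \setminus \OO_{Y, D}$ has pole along $D$ and thus along $D'$) forces $v_{D'}|_{\kk(Y)} = e \cdot v_D$ for some integer $e \geq 1$. Because $\eta_{D'} \in \pi^{-1}(U)$ (open sets containing a specialization contain the generic point), $f$ is regular at $\eta_{D'}$, i.e., $v_{D'}(f) \geq 0$, so $e \cdot v_D(\tilde{f}) \geq 0$ and hence $v_D(\tilde{f}) \geq 0$, as desired. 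The main obstacle I anticipate is the careful construction of $D'$ with both the ``dominance'' and the ``meets $\pi^{-1}(U)$'' property simultaneously; the normality of $Y$ is what ensures $D$ is not trapped in the singular locus, making the Cartier reduction available.
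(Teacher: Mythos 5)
Your proof is correct. The forward implication is essentially the same as the paper's (the domain of a $G$-invariant rational function is $G$-stable, hence a union of fibres, hence of the form $\pi^{-1}(U)$ with $U$ open, and the quotient property on $U$ puts $f$ into $\OO_Y(U)\subseteq\kk(Y)$). The backward implication, however, follows a genuinely different route. The paper argues topologically: $\pi|_{\pi^{-1}(U)}$ is open and surjective with fibres equal to the level sets of $f$, so $f$ descends to a \emph{continuous} map $\bar f\colon U\to\AA^1$, which is rational because $\kk(X)^G=\kk(Y)$, and ``continuous, rational, and $Y$ normal'' gives regular by \cite[Lemma~7.6]{ReUrSa2024The-Structure-of-A}. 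You instead invoke normality through algebraic Hartogs and verify the absence of poles divisor by divisor, lifting each prime divisor $D$ of $Y$ meeting $U$ to a divisor $D'$ of $X$ dominating it. Both arguments make essential use of the normality of $Y$; the paper's is shorter because the quotient-topology observation does all the work, while yours is more self-contained and makes the pole-propagation mechanism explicit. Two minor remarks. First, the fibre-dimension formula is not needed to produce $W$: surjectivity of $\pi$ and irreducibility of $D\cap Y^{\mathrm{reg}}$ already force one of the finitely many irreducible components of $\pi^{-1}(D\cap Y^{\mathrm{reg}})$ to dominate $D$, and once $\pi(\eta_{D'})=\eta_D\in U$ the condition $\eta_{D'}\in\pi^{-1}(U)$ is automatic. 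Second, and more substantively, $X$ is only assumed to be an irreducible variety, so $\OO_{X,D'}$ need not be a DVR and $v_{D'}$ need not exist as a discrete valuation; but this is harmless, since the ring identity you prove, $\OO_{X,D'}\cap\kk(Y)=\OO_{Y,D}$ (the local homomorphism $\OO_{Y,\eta_D}\to\OO_{X,\eta_{D'}}$ sends units to units and the maximal ideal into the maximal ideal, so $h\notin\OO_{Y,D}$ forces $\pi^{*}h\notin\OO_{X,D'}$), already yields $\tilde f\in\OO_{Y,D}$ directly from $f=\pi^{*}\tilde f\in\OO_{X,\eta_{D'}}$, with no valuation or ramification index required.
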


\begin{proof}
	Let $U \subseteq Y$ be an open subset and let $f \colon \pi^{-1}(U) \to \AA^1$ be a 
	$G$-invariant function. Since $\pi^{-1}(U) \to U$ is open, surjective and its 
	fibres are $G$-orbits, there exists a continuous map $\bar{f} \colon U \to \AA^1$
	with $f = \bar{f} \circ \pi |_{\pi^{-1}(U)}$. As $\kk(X)^G = \kk(Y)$,
	$\bar{f}$ is rational. Using that $Y$ is normal we conclude that $\bar{f}$ is a morphism
	(see e.g.~\cite[Lemma~7.6]{ReUrSa2024The-Structure-of-A}).
	For the reverse implication it is enough to note that the domain of a 
	$G$-invariant rational map is $G$-stable.
\end{proof}

\begin{theorem}
	\label{Thm.Rosenlicht}
	Let $G \subseteq \Bir(X)$ be a subgroup that is generated by
	countably many irreducible algebraic subsets of $\Bir(X)$, where each of them contains 
	the identity.
	Then $G$ admits a Rosenlicht quotient with irreducible fibres and
	$\kk(X)^G$ is algebraically closed in $\kk(X)$.
\end{theorem}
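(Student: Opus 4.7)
My plan is to first establish that $L := \kk(X)^G$ is algebraically closed in $\kk(X)$, and then leverage this to construct the Rosenlicht quotient via a dimension count in the spirit of \cite[\S14.5]{FeRi2017Actions-and-invari}. I begin with a cosmetic reduction: since closures (by \cite[Corollary~4.6]{ReUrSa2024The-Structure-of-A}), products, and inverses of irreducible algebraic subsets containing $\id$ are again of the same type, I may arrange $G = \bigcup_n A_n$ with the $A_n$ closed irreducible algebraic subsets containing $\id$, nested ($A_n \subseteq A_{n+1}$), symmetric ($A_n^{-1} = A_n$), and closed under product ($A_n \cdot A_m \subseteq A_{n+m}$).

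For the algebraic closedness of $L$ in $\kk(X)$, I take $f \in \kk(X)$ algebraic over $L$ with distinct conjugates $f_1 = f, \ldots, f_d$; since $G$ fixes $L$, pull-back by any $g \in G$ permutes the $f_i$. For each $n$ and $k$ I set $A_n^k := \{a \in A_n : a^*f = f_k\}$, so that $A_n = \bigsqcup_k A_n^k$. The key point is that each $A_n^k$ is closed in $A_n$: choosing a morphism $\rho \colon V \to \Bir(X)$ from an irreducible variety $V$ with image $A_n$, the algebraic family structure supplies a rational function $F(v,x) = f(\rho(v)(x)) - f_k(x)$ on $V \times X$, and $v \in \rho^{-1}(A_n^k)$ iff $F|_{\{v\} \times X} \equiv 0$; writing $F = P/Q$ locally, this is the closed condition that all coefficients of $P$ (as a polynomial in $x$ with coefficients in $\kk[V]$) vanish at $v$. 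Since $V$ is irreducible and $\id \in A_n^1$, I conclude $V = \rho^{-1}(A_n^1)$, i.e.\ $A_n = A_n^1$. Hence $a^*f = f$ for every $a \in A_n$, for every $n$, so $f \in \kk(X)^G = L$.

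For the Rosenlicht quotient, I take a normal variety $Y$ with $\kk(Y) = L$ and, after shrinking $X$, obtain a surjective morphism $\pi \colon X \to Y$. Since $L$ is algebraically closed in $\kk(X)$, the geometric generic fibre of $\pi$ is irreducible; a further shrinking (generic flatness plus openness of the geometrically integral locus) ensures that all fibres of $\pi$ are irreducible of dimension $\dim X - \dim Y$. By construction $\pi$ is $G$-invariant, so $G$-orbits lie in fibres; let $d$ be the generic dimension of $\overline{Gx}$. Fixing $N$ so that $\overline{A_N x} = \overline{Gx}$ for generic $x$, I organise the $d$-dimensional irreducible orbit closures $\overline{A_N x}$ into a rational map $q \colon X \dashrightarrow Z$ (via the incidence variety $W_N \subseteq X \times X$ defined as the closure of $\{(x, a(x)) : a \in A_N\}$, together with a suitable rational quotient construction). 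Rational functions on $Z$ pull back to $G$-invariants on $X$, so $\kk(Z) \subseteq \kk(X)^G = L$, forcing $\dim X - d = \dim Z \leq \dim Y$, i.e.\ $d \geq \dim X - \dim Y$; combined with the reverse inequality this gives $d = \dim X - \dim Y$. Since the generic fibre is irreducible of this dimension and contains the (also irreducible) generic orbit closure, the two coincide generically. A final shrinking to a $G$-stable open $X_0$ on which $\pi$ is open with fibres equal to $G$-orbits, combined with Lemma~\ref{Lem.geometric_quotient}, yields the Rosenlicht quotient.

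The main obstacle is the construction of $q \colon X \dashrightarrow Z$ and the verification that $\kk(Z) \subseteq \kk(X)^G$. In the classical Rosenlicht setting this follows from algebraicity of $G$; here $G$ is only countably generated, so one must work through $A_N$ and the incidence $W_N$, and the upgrade from $A_N$-invariance to $G$-invariance of functions pulled back from $Z$ should rely on the same closedness/irreducibility mechanism that made the algebraic closedness of $L$ possible.
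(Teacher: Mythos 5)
Your argument for the algebraic closedness of $L=\kk(X)^G$ is correct and is a mild variant of the paper's Lemma~\ref{Lem.KXG_alg_closed}: the paper applies irreducibility to the orbit piece $V_d^\ast x$ (the finite set $f(V_d^\ast x\cap\dom(f))$ is irreducible, hence a single point), whereas you apply connectedness of the parameter variety $V$ to the decomposition into the sets $A_n^k$. Your closedness claim for $A_n^k$ is right, though the clean justification is that the projection $V\times X\to V$ is open, so the set of $v$ whose fibre is not contained in the zero locus of $F$ is open; one should also note that $F(v,\cdot)$ is defined on a dense open subset of $X$ for \emph{every} $v\in V$ (this uses that $\lociso(\theta)$ surjects onto $V$ with open, hence dense, fibres), so that vanishing of $F(v,\cdot)$ really is equivalent to $\rho(v)^\ast f=f_k$. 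This half is essentially complete.

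The quotient half takes a genuinely different route --- Rosenlicht's original dimension count via a rational quotient by orbit closures --- where the paper instead proves that the incidence map $\Phi_n\colon\lociso(\theta_n)\to X\times_Y X$, $(v,x)\mapsto((\pr_X\circ\theta_n)(v,x),x)$, is dominant, using the independence Lemma~\ref{Lem.Lin_indep_overkXG} that your proof bypasses entirely. Your route is viable but has two unresolved steps. First, the construction of $q\colon X\dashrightarrow Z$ and the inclusion $\kk(Z)\subseteq\kk(X)^G$, which you yourself flag as ``the main obstacle'', are not carried out; they are standard (map to a Chow or Hilbert scheme, or descend along the incidence variety), and the required $G$-invariance does follow from the Noetherian stabilization $\overline{W_m}=\overline{W_N}$, which gives $\overline{A_mx}=\overline{A_Nx}$ for all $m$ and all $x$ in one fixed dense open subset --- but this is precisely the content that has to be written down, and it is where the work lies. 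Second, your conclusion only yields $\overline{Gx}=\pi^{-1}(\pi(x))$ generically, while the theorem requires $Gx=\pi^{-1}(\pi(x))$ on a $G$-stable open set; ``a final shrinking'' does not achieve this by itself, since a priori the fibre could contain several orbits each dense in it. The missing argument is that $A_Nx$ is constructible and dense in the fibre, hence contains a dense \emph{open} subset of it, so two orbits through points of the good locus lying in the same fibre must meet and therefore coincide; one then sets $X_0=\bigcup_{g\in G}g(\lociso(g)\cap X_1)$. The paper's dominance claim delivers exactly this density of a constructible orbit piece in the fibre; in your setup the same statement is available from the incidence variety, but it must be made explicit.
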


Note that a closed connected subgroup of $\Bir(X)$ satisfies the assumptions of 
Theorem~\ref{Thm.Rosenlicht}, as it can be filtered 
by irreducible closed algebraic subsets of $\Bir(X)$.
In particular, this holds if $G$ is a connected so-called \emph{ind-group} that 
acts regularly on $X$, 
a fact that is also proved by Kraft, \cite{Kr2024On-Actions-and-Quo}.

If $\car(\kk) = 0$, the last statement of Theorem~\ref{Thm.Rosenlicht}
implies that the geometric generic fibre of $\pi$ is integral, see 
e.g.~\cite[Proposition~5.51]{GoWe0Algebraic-geometryI}.

\medskip

For the proof of Theorem~\ref{Thm.Rosenlicht} we take
an increasing sequence of closed irreducible subvarieties $V_1 \subseteq V_2 \subseteq \cdots$
and a map $\rho \colon \bigcup_{d \geq 1} V_d \to \Bir(X)$ with image equal to $G$
such that $\rho_d \coloneqq \rho |_{V_d} \colon V_d \to \Bir(X)$ is a morphism for each $d \geq 1$,
see Corollary~\cite[Corollary~4.13]{ReUrSa2024The-Structure-of-A}.
Let $\theta_d \colon V_d \times X \dashrightarrow V_d \times X$ 
be the algebraic family corresponding to $\rho_d$. 

\begin{remark}\label{Rem.NiceOrbit}
	For $x \in X$ we consider the following subset of $X$
	\[
		V_d^\ast x \coloneqq (\pr_X \circ \theta_d)(\lociso(\theta_d) \cap (V_d \times \{x\}) )  \, ,
	\]
	where $\pr_X$ denotes the projection to $X$.
	Then $V_d^\ast x$ is an irreducible constructible subset of $X$ that is contained in 
	$\rho_d(V_d)x$. The proof of Theorem~\ref{Thm.Rosenlicht} will show that 
	there exists $n \geq 1$ such that $V_n^\ast x$ is dense in the orbit $Gx$ for general $x \in X$.
\end{remark}

For $f \in \kk(X)$ and $g \in G$  we denote by $f^g$ the rational function
$x \dashrightarrow f(g(x))$. We start the proof of Theorem~\ref{Thm.Rosenlicht} 
with two lemmas:

\begin{lemma}
	\label{Lem.Lin_indep_overkXG}
	If $f_1, \ldots, f_s \in \kk(X)$ are linearly independent over $\kk(X)^G$, then 
	we have for all  $q_1, \ldots, q_s \in \kk(X)$
	\[
		\sum_{i=1}^s q_i \cdot f_i^g = 0 \quad \textrm{for all $g \in G$}
		\quad \implies \quad q_1 = \ldots = q_s = 0 \, ,
	\]
\end{lemma}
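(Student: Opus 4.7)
The plan is to prove the lemma by induction on $s$, with the base case $s=1$ being immediate: if $q_1 f_1^g = 0$ for $g = \id$, then $q_1 f_1 = 0$, and linear independence (of a single function) forces $f_1 \neq 0$, hence $q_1 = 0$.

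For the inductive step, suppose the claim fails and take $s \geq 2$ minimal with non-zero $q_1, \ldots, q_s$ satisfying $\sum_{i=1}^s q_i f_i^g = 0$ for all $g \in G$. By minimality, every $q_i$ is non-zero (else a shorter relation would exist); dividing through we may arrange $q_1 = 1$. I would then exploit two different manipulations of the relation. First, replacing $g$ by $gh$ for an arbitrary $h \in G$ gives
\[
	f_1^{gh} + \sum_{i=2}^s q_i \, f_i^{gh} = 0 \quad \text{for all } g,h \in G.
\]
Second, starting from the original equation and performing the rational substitution $x \dashmapsto h(x)$ (using $f^g(h(x)) = f^{gh}(x)$ and $q(h(x)) = q^h(x)$) yields
\[
	f_1^{gh} + \sum_{i=2}^s q_i^h \, f_i^{gh} = 0 \quad \text{for all } g,h \in G.
\]
Subtracting produces $\sum_{i=2}^s (q_i - q_i^h) f_i^{gh} = 0$. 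For each fixed $h$, the element $gh$ runs through all of $G$ as $g$ does, so this is a relation of the form handled by the inductive hypothesis (the functions $f_2,\ldots,f_s$ remain linearly independent over $\kk(X)^G$).

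The induction therefore forces $q_i = q_i^h$ for all $i \geq 2$ and all $h \in G$, i.e.\ every $q_i$ lies in $\kk(X)^G$. Specializing the original relation to $g = \id$ gives $f_1 + \sum_{i=2}^s q_i f_i = 0$, a non-trivial $\kk(X)^G$-linear dependence among $f_1,\ldots,f_s$, contradicting the hypothesis. The only subtle point — and where I expect the main care is needed — is the compatibility of the substitution $x \dashmapsto h(x)$ with rational functions: one must verify that $q \mapsto q^h$ is a well-defined field automorphism of $\kk(X)$ and that both sides of each identity are genuinely equal as elements of $\kk(X)$, so that the subtraction is legitimate; this is just the standard fact that $G$ acts on $\kk(X)$ by $\kk$-algebra automorphisms via $q \mapsto q^h$, and no continuity or algebraicity of $G$ is required for this step.
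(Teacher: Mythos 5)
Your proposal is correct and follows essentially the same route as the paper: normalize the first coefficient to $1$, compare the relation with its image under the field automorphism $q \mapsto q^h$ (the paper's equations (1) and (2) with $a$ in place of $h$), subtract so the first term cancels, apply the inductive hypothesis to conclude $q_i \in \kk(X)^G$, and specialize to $g = \id$ to contradict linear independence. The point you flag as subtle — that $q \mapsto q^h$ is a $\kk$-field automorphism satisfying $(f^g)^h = f^{gh}$, with no continuity or algebraicity needed — is exactly the (implicit) justification the paper relies on.
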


\begin{proof}
	If $s = 1$, then the statement follows from $f_1 \neq 0$, and thus we assume $s > 1$.
	Assume towards a contradiction that $q_1 \neq 0$, and hence,
	\begin{equation}
		\label{Eq1}
		\sum_{i=1}^s \frac{q_i}{q_1} \cdot f_i^g = 0 \quad \textrm{for all $g \in G$} \, .
	\end{equation}
	This implies $\sum_{i=1}^s \frac{q_i^a}{q_1^a} \cdot f_i^{ba} = 0$ for all $a, b \in G$ and thus
	\begin{equation}
		\label{Eq2}
		\sum_{i=1}^s \frac{q_i^a}{q_1^a} \cdot f_i^g = 0 \quad \textrm{for all $a, g \in G$}  \, .
	\end{equation}
	Now equations~\eqref{Eq1} and~\eqref{Eq2} give for all $a, g \in G$
	\[
		0 = \sum_{i=1}^s \frac{q_i^a}{q_1^a} \cdot f_i^g  - \sum_{i=1}^s \frac{q_i}{q_1} \cdot 
		f_i^g = \sum_{i=2}^s \left( \frac{q_i^a}{q_1^a} - \frac{q_i}{q_1}  \right) 
		\cdot f_i^g \, .
	\]
	By induction hypothesis we get $\frac{q_i}{q_1} \in \kk(X)^G$ for all $i=2, \ldots, s$.
	Now, equation~\eqref{Eq1} applied to $g = \id_X$ gives 
	$\frac{q_i}{q_1} = 0$ for all $i \geq 1$, since 
	$f_1, \ldots, f_s$ are linearly independent over $\kk(X)^G$, contradiction.
\end{proof}

\begin{lemma}
	\label{Lem.KXG_alg_closed}
	The field $\kk(X)^G$ is algebraically closed in $\kk(X)$.
\end{lemma}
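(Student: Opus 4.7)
The plan is to take $f \in \kk(X)$ algebraic over $\kk(X)^G$ and show that $f \in \kk(X)^G$. Fix a non-zero polynomial $p(T) \in \kk(X)^G[T]$ annihilating $f$. Since the coefficients of $p$ are $G$-invariant, applying any $g \in G$ to the relation $p(f) = 0$ yields $p(f^g) = 0$, so the $G$-orbit of $f$ in $\kk(X)$ lies inside the finite root set of $p$. Write it as $\{f_1, \dots, f_r\}$ with $f_1 = f$; the goal becomes showing $r = 1$.

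To exploit the algebraic structure of $G$, write $G$ as the subgroup generated by countably many irreducible algebraic subsets $\rho_d(V_d) \subseteq \Bir(X)$ with $\rho_d(v_0^{(d)}) = \id_X$ for some $v_0^{(d)} \in V_d$, and let $\theta_d \colon V_d \times X \dashrightarrow V_d \times X$ denote the corresponding algebraic family. Consider the rational function
\[
F_d \colon V_d \times X \dashrightarrow \AA^1, \qquad (v, x) \dashmapsto f\bigl((\pr_X \circ \theta_d)(v, x)\bigr),
\]
whose restriction $F_d|_{\{v\}\times X}$, for $v$ in an open dense subset of $V_d$, is the rational function $f^{\rho_d(v)} \in \{f_1, \dots, f_r\}$. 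Consequently the product $\Phi_d \coloneqq \prod_{i=1}^{r}(F_d - f_i) \in \kk(V_d \times X)$ vanishes identically on each such fibre $\{v\}\times X$.

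I would then argue that $\Phi_d = 0$ in the function field $\kk(V_d \times X)$. Assuming the contrary, the zero locus of $\Phi_d$ inside its domain of definition would be a proper closed subset of the irreducible variety $V_d \times X$; however, some irreducible component of its closure accommodates a full fibre $\{v\}\times X$ for $v$ in a dense subset of $V_d$, which by a standard fibre-dimension count forces it to have dimension at least $\dim V_d + \dim X$, hence to equal $V_d \times X$, a contradiction. Since $V_d \times X$ is irreducible its function field is a field, so $\Phi_d = 0$ implies $F_d \equiv f_i$ for some $i$. Specialising at $v_0^{(d)}$ gives $f = F_d(v_0^{(d)}, \cdot) = f_i$, so $f^{\rho_d(v)} = f$ for every $v \in V_d$. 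Because the subsets $\rho_d(V_d)$ generate $G$, this yields $f \in \kk(X)^G$.

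The main obstacle I anticipate is the ``fibrewise vanishing implies identical vanishing'' step for $\Phi_d$: one has to be careful about the domains of definition of $F_d$ and the meaning of restricting a rational function to a fibre, so as to legitimately compare the fibrewise statement with the function-field identity. Once this dimension bookkeeping is set up, however, everything else is formal, and it is worth noting that Lemma~\ref{Lem.Lin_indep_overkXG} is not needed for this argument — it will presumably be deployed for other parts of Theorem~\ref{Thm.Rosenlicht}, such as constructing the quotient morphism itself.
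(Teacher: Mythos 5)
Your argument is correct and is essentially the paper's: both proofs come down to the observation that $g \mapsto f^g$ takes only finitely many values (roots of $p$) as $g$ ranges over the irreducible family $\rho_d(V_d)$, so that irreducibility forces this assignment to be constant, and the constant must be $f$ because the family contains the identity. The only difference is packaging — the paper evaluates at a well-chosen point $x \in X$ and uses irreducibility of the orbit piece $V_d^\ast x$ to see that the finite set $f(V_d^\ast x \cap \dom(f))$ is a single point, whereas you stay at the level of the function field of the irreducible variety $V_d \times X$ via the identity $\prod_i (F_d - f_i) = 0$ — and you are right that Lemma~\ref{Lem.Lin_indep_overkXG} is not needed here; the paper uses it only later, in the proof of Theorem~\ref{Thm.Rosenlicht} itself.
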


\begin{proof}
	Let $f \in \kk(X)$ be algebraic over $\kk(X)^G$. Then there exists
	a non-zero univariate polynomial $p$ with coefficients in  $\kk(X)^G$ such that 
	$p(f) = 0$ in $\kk(X)$
	and hence $p(f^g) = 0$ for all $g \in G$. In particular, 
	$S = \set{f^g \in \kk(X)}{g \in G}$ is finite. 
	We have to prove that $S$ consists of a single element.
	Towards a contradiction, we assume that there  are $g_0, g_1 \in G$ with
	$f^{g_0} \neq f^{g_1}$. Take $d \geq 1$ and $v_0, v_1 \in V_d$ with $g_i = \rho_d(v_i)$.
	There exists $x \in X$ with $(v_i, x) \in \lociso(\theta_d)$,
	$g_i(x) \in \dom(f)$ for $i=1,2$  and  $f^{g_0}(x) \neq f^{g_1}(x)$. Note that
	\[
		T \coloneqq  f(V_d^\ast x \cap \dom(f)) 
		\subseteq \AA^1
	\]
	is a finite set, since $S$ is finite. However, as $T$ is irreducible, 
	we get a contradiction.
\end{proof}

\begin{proof}[Proof of Theorem~\ref{Thm.Rosenlicht}]
	By \cite[Theorem~24.9]{Is2009Algebra:-a-graduat} there exist finitely many generators 
	$p_1, \ldots, p_r$ of the field extension $\kk \subseteq \kk(X)^G$. Consider the rational map
	\[
		\pi = (p_1, \ldots, p_r) \colon X \dashrightarrow Y \subseteq \AA^r \, ,
	\]
	where $Y$ is the closure of the image of $\pi$ in $\AA^r$.
	By construction, $\kk(Y) = \kk(X)^{G}$.
	After shrinking $X$, we may assume that
	$\pi$ is everywhere defined 
	(note that in this process, we only remove a union of $G$-orbits). 
	Moreover, by further shrinking $Y$ (and hence $X$), we may assume that
	$\pi$ is flat and surjective, and $Y$ is affine and smooth
	(again, we only remove a union of $G$-orbits in $X$).
	Hence, using Lemma~\ref{Lem.geometric_quotient} and Lemma~\ref{Lem.KXG_alg_closed} it is enough to show that the 
	fibres of $\pi$ are irreducible $G$-orbits.

	\medskip

	For proving this, let $U_d = \lociso(\theta_d)$ and consider the morphism
	\[
		\Phi_d \colon U_d \to X \times_Y X \, , \quad
		(v, x) \mapsto ((\pr_X \circ \theta_d)(v, x), x) \, .
	\]
	Let $Z_d \coloneqq \Phi_d(U_d)$. Since 
	$\rho_d = \rho_{d+1} |_{V_d}$, we have 
	$\theta_{d}(v, x) = \theta_{d+1}(v, x)$ for all $(v, x) \in U_{d+1} \cap V_d \times X$.
	This implies that $\overline{Z_d} \subseteq \overline{Z_{d+1}}$ for all $d \geq 1$.
	Hence, there exists 
	$n \in \NN_0$ with $\overline{Z_n} = \overline{Z_{n+i}}$ for all $i \geq 0$. 
	
	\begin{claim}
		\label{Claim1}
		The morphism $\Phi_n \colon U_n \to X \times_Y X$ is dominant.
	\end{claim}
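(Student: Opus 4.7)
The plan is to argue by contradiction, using Lemma~\ref{Lem.Lin_indep_overkXG} to force a collapse. After shrinking $X$ to an affine open (dominance of $\Phi_n$ can be checked locally on the target, so this reduction is harmless), failure of dominance would produce a non-zero regular function $h$ on an affine open of $X \times_Y X$ that vanishes on $Z_n$. Passing into $\kk(X) \otimes_{\kk(Y)} \kk(X)$ and using that $\kk(Y) = \kk(X)^G$ by construction together with Lemma~\ref{Lem.KXG_alg_closed}, one may rewrite $h = \sum_{i=1}^s f_i(x_1)\, q_i(x_2)$ with $f_1, \dots, f_s \in \kk(X)$ linearly independent over $\kk(X)^G$ and not all $q_i \in \kk(X)$ zero; this is simply the statement that in a tensor product over a field one may replace the left factors by a basis of their $\kk(Y)$-span.

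Next I would translate the vanishing $h \circ \Phi_n \equiv 0$ on $U_n$ into
\[
\sum_{i=1}^s f_i\bigl(\rho_n(v)(x)\bigr)\, q_i(x) = 0 \quad \text{for all } (v, x) \in U_n.
\]
The crucial step is to upgrade this to a rational identity in $\kk(X)$ valid for every $v \in V_n$, not merely for generic ones. This follows directly from the definition of an algebraic family of birational transformations: $U_n = \lociso(\theta_n)$ projects surjectively onto $V_n$, so for each $v \in V_n$ the slice $U_n \cap (\{v\} \times X)$ is a non-empty open, and hence dense, subset of the irreducible variety $X$. Consequently $\sum_i f_i^g\, q_i = 0$ in $\kk(X)$ for every $g \in \rho_n(V_n)$, where $f_i^g = f_i \circ g$.

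Finally, the stabilization $\overline{Z_n} = \overline{Z_{n+i}}$ for all $i \geq 0$ guarantees that $h$ vanishes on every $Z_m$ with $m \geq n$; running the same argument with $V_m$ in place of $V_n$ extends the identity to every $g \in \rho_m(V_m)$, and taking the union over $m \geq n$ together with $G = \bigcup_m \rho_m(V_m)$ gives $\sum_i f_i^g\, q_i = 0$ in $\kk(X)$ for every $g \in G$. Lemma~\ref{Lem.Lin_indep_overkXG} then forces $q_1 = \cdots = q_s = 0$, contradicting $h \neq 0$.

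I expect the main obstacle to be precisely the surjectivity-of-slices step: promoting pointwise vanishing on $U_n$ to an identity in $\kk(X)$ valid for \emph{every} $v \in V_n$ (not only for generic $v$) is what allows the stabilization to propagate the identity uniformly over all of $G$, and it rests on the often-overlooked surjectivity condition $\lociso(\theta_n) \twoheadrightarrow V_n$ built into the notion of an algebraic family of birational transformations.
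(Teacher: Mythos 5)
Your proof is correct and follows essentially the same route as the paper's: reduce to affine opens, view the vanishing function inside $\kk(X)\otimes_{\kk(Y)}\kk(X)$ with left factors normalized to be linearly independent over $\kk(Y)=\kk(X)^G$, use the surjectivity of $\lociso(\theta_m)\to V_m$ together with the stabilization $\overline{Z_m}=\overline{Z_n}$ to get the identity $\sum_i q_i\cdot f_i^g=0$ for every $g\in G$, and conclude with Lemma~\ref{Lem.Lin_indep_overkXG}. The only cosmetic differences are that you phrase it as a contradiction and cite Lemma~\ref{Lem.KXG_alg_closed} where the relevant ingredient is really the flatness of $\OO(W_i)$ over $\OO(Y)$ (arranged earlier by shrinking), which gives the injection into the tensor product of function fields.
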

	
	\begin{proof}
		Take open dense affine subsets $W_1$, $W_2$ in $X$. To prove the claim it is enough 
		to show that 
		\[
			\Phi_n^{-1}(W_1 \times_Y W_2) \xrightarrow{\Phi_n} W_1 \times_Y W_2
		\]
		is dominant. 
		Let $h_i \in \OO(W_1)$, $f_i \in \OO(W_2)$, $i=1, \ldots, s$ such that 
		\begin{equation}
			\label{Eq.Phi}
			\Phi_n^{-1}(W_1 \times_Y W_2) \xrightarrow{\Phi_n} W_1 \times_Y W_2 \xrightarrow{\sum_{i=1}^s h_i \otimes f_i } \AA^1
		\end{equation}
		is the zero map. It is enough to show that $\sum_{i=1}^s h_i \otimes f_i$ is zero in $\OO(W_1) \otimes_{\OO(Y)} \OO(W_2)$.
		As $\OO(W_1), \OO(W_2)$ are flat $\OO(Y)$-modules, we get a natural injection
		\[
		\OO(W_1) \otimes_{\OO(Y)} \OO(W_2) \to \kk(X) \otimes_{\kk(Y)} \kk(X) \, .
		\]
		After reordering $f_1, \ldots, f_s$, we may assume that $f_1, \ldots, f_t$ are linearly independent over $\kk(Y)$ and every $f_i$ can be written as
		$f_i = \sum_{j=1}^t \frac{b_{ij}}{c} f_j$,
		for $b_{ij}, c \in \OO(Y)$ and $c \neq 0$. Note that
		\[
			\sum_{i=1}^s h_i \otimes f_i = \sum_{i=1}^s h_i \otimes \sum_{j=1}^t \frac{b_{ij}}{c} f_j
			= \frac{1}{c} \sum_{j=1}^t \left(\sum_{i=1}^s b_{ij} h_i \right) \otimes f_j \, .
		\]
		Hence, we may and will assume that $f_1, \ldots, f_s$ are linearly independent over $\kk(Y)$.

		\smallskip

		Using Lemma~\ref{Lem.Lin_indep_overkXG}, Claim~\ref{Claim1} follows once we have proven
		that $\sum_{i=1}^s h_i \cdot f_i^g = 0$ in $\kk(X)$ for all $g \in G$.
		Let $m \geq n$. As $\overline{Z_m} = \overline{Z_n}$,
		\[
				\Phi_m^{-1}(W_1 \times_Y W_2) \xrightarrow{\Phi_m} W_1 \times_Y W_2 \xrightarrow{\sum_{i=1}^s h_i \otimes f_i} \AA^1
		\]
		is the zero map. Hence $\sum_{i=1}^s h_i(x) \otimes f_i((\pr_X \circ \theta_m)(v, x)) = 0$
		for all $(v, x) \in \Phi_m^{-1}(W_1 \times_Y W_2)$. 
		Since $\Phi_m^{-1}(W_1 \times_Y W_2)$ surjects onto $V_m$
		(for $v \in V_m$ take $w_2 \in W_2$ such that $(v, w_2) \in U_m$ and 
		$(\pr_X \circ \theta_m)(v, w_2) \in W_1$; then $\Phi_m(v, w_2) \in W_1 \times_Y W_2$), 
		we obtain $\sum_{i=1}^s h_i \cdot f_i^g = 0$ for all $g \in \rho_m(V_m)$. 
		As $G = \bigcup_{m \geq 1} \rho_m(V_m)$, this implies
		$\sum_{i=1}^s h_i \cdot f_i^g = 0$ for all $g \in G$
	\end{proof}

	For $i = 1, 2$, let $\pr_i \colon X \times_Y X \to X$ be the projection to the $i$-th factor.
	Since $Z_n$ is constructible and dense in $X \times_Y X$ (see Claim~\ref{Claim1}),
	there exists an open dense subset $X_1 \subseteq X$ such that $\pr_2^{-1}(x) \cap Z_n$
	is dense in $\pr_2^{-1}(x)$ for all $x \in X_1$ (see e.g.~\cite[Lemma~3.4]{ReUrSa2024The-Structure-of-A}).
	Hence, the subset $V_n^\ast x = \pr_1(\pr_2^{-1}(x) \cap Z_n)$ is 
	dense in $\pi^{-1}(\pi(x)) = \pr_1(\pr_2^{-1}(x))$ for all $x \in X_1$.
	This will imply the statement of Remark~\ref{Rem.NiceOrbit} 
	once we show that $\pi^{-1}(\pi(x))$ is a $G$-orbit 
	for general $x \in X$. Let
	\[
		X_0 \coloneqq \bigcup_{g \in G} g(\lociso(g) \cap X_1) \subseteq X \, , \quad
		\pi_0 \coloneqq \pi |_{X_0} \colon X_0 \to \pi(X_0) \, .
	\]
	Note that $X_0$ is an open dense $G$-stable subset of $X$ that contains $X_1$.
	Since $\pi$ is constant on $G$-orbits, we obtain
	$\pi(X_0) = \pi(X_1)$.
	
	Let $x_1 \in X_1$. Then $V_n^\ast x_1 \subseteq G x_1 \subseteq \pi_0^{-1}(\pi_0(x_1))$ and
	$V_n^\ast x_1$ is constructible and 
	dense in the fibre $\pi_0^{-1}(\pi_0(x_1))$. Since $V_n^\ast x_1$ is irreducible,
	$\pi_0^{-1}(\pi_0(x_1))$ is irreducible as well.
	Assume there exists a point $x_1' \in \pi_0^{-1}(\pi_0(x_1)) \setminus G x_1$. As $X_0$
	is a union of $G$-orbits of points in $X_1$, we may assume that $x_1' \in X_1$. Again $V_n^\ast x_1'$ is constructible and dense in
	$\pi_0^{-1}(\pi_0(x_1')) =  \pi_0^{-1}(\pi_0(x_1))$. Hence,
	$G x_1$ and $G x_1'$ intersect, contradiction.
\end{proof}

Using Remark~\ref{Rem.NiceOrbit} we get the following consequence:

\begin{corollary}
	\label{Cor.NiceOrbit}
	Assume that $G \subseteq \Bir(X)$ satisfies the hypothesis of Theorem~\ref{Thm.Rosenlicht}.
	Then $G$ contains an irreducible 
	algebraic subset $Z \subseteq \Bir(X)$
	such that $Z x$ is dense in $G x$ for general $x \in X$ and $Z x$ contains 
	a subset that is open in $G x$. 
	
	If moreover, $G$ is an algebraic subgroup, then for general $x \in X$
	there is an open, dense subset $V_x \subseteq G$ and 
	a dominant morphism $V_x \to G x$ given by $v \mapsto v(x)$.
\end{corollary}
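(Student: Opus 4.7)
My plan is to read both assertions directly off the construction in the proof of Theorem~\ref{Thm.Rosenlicht}. For the first part, I take $n \geq 1$ as provided by Remark~\ref{Rem.NiceOrbit}, so that $V_n^\ast x$ is dense in $Gx$ for general $x \in X$, and set $Z := \rho_n(V_n)$. Since $V_n$ is irreducible and $\rho_n \colon V_n \to \Bir(X)$ is a morphism, $Z$ is an irreducible algebraic subset of $\Bir(X)$, contained in $G$ by construction. The inclusion $V_n^\ast x \subseteq Z x$ immediately yields density of $Zx$ in $Gx$ for general $x$.

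To upgrade density to containment of an open subset of $Gx$, I will invoke the fact, established inside the proof of Theorem~\ref{Thm.Rosenlicht}, that for $x$ in a dense open subset $X_1 \subseteq X$ the orbit $Gx$ coincides with the irreducible fibre $\pi_0^{-1}(\pi_0(x))$. Because $V_n^\ast x$ is the image under a morphism of an open subset of the irreducible variety $V_n \times \{x\}$, it is constructible and irreducible by Chevalley's theorem. A constructible dense subset of an irreducible variety contains a non-empty open subset, and this open subset of $Gx$ is then contained in $Zx$.

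For the second assertion, assume that $G$ is an algebraic subgroup. Then the rational $G$-action on $X$ provides an algebraic family $\theta \colon G \times X \dashrightarrow G \times X$ parametrized by $G$, and Remark~\ref{Rem.NiceOrbit} applied to this family (taking $V_n := G$) ensures that
\[
	G^\ast x := (\pr_X \circ \theta)(\lociso(\theta) \cap (G \times \{x\}))
\]
is dense in $Gx$ for general $x \in X$. I set $V_x := \{g \in G \mid (g, x) \in \lociso(\theta)\}$. Openness of $V_x$ in $G$ is immediate because $\lociso(\theta)$ is open in $G \times X$, and restricting $\pr_X \circ \theta$ gives the morphism $V_x \to Gx$, $v \mapsto v(x)$, whose image is precisely $G^\ast x$ and thus dense in $Gx$. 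Density of $V_x$ in $G$ for general $x$ will follow from the fibre-dimension formula applied to the projection to $X$ of the closed complement $(G \times X) \setminus \lociso(\theta)$: either this complement does not dominate $X$, in which case $V_x = G$ for general $x$, or its generic fibre over $X$ has dimension strictly less than $\dim G$, giving openness and density of $V_x$ in $G$.

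The main difficulty, really only bookkeeping, will be arranging the various density and irreducibility assertions to hold on a single common open dense subset of $X$; in particular, for the second assertion one needs density of $V_x$ in $G$ and density of the image of $V_x$ under evaluation at $x$ in $Gx$ simultaneously on some open dense set of $x$.
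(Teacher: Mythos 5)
Your proposal is correct and follows exactly the route the paper intends: the paper gives no separate proof of Corollary~\ref{Cor.NiceOrbit} beyond the phrase ``Using Remark~\ref{Rem.NiceOrbit} we get the following consequence,'' and your choices $Z=\rho_n(V_n)$ (with $V_n^\ast x\subseteq Zx$ dense and constructible in the irreducible fibre $Gx=\pi_0^{-1}(\pi_0(x))$, hence containing an open subset of $Gx$) and, for the algebraic-subgroup case, $V_x=\{g\in G\mid (g,x)\in\lociso(\theta)\}$ for the family parametrized by $G$ itself are precisely the intended reading. The only simplification worth noting is that density of $V_x$ in $G$ needs no fibre-dimension argument: $V_x$ is a non-empty open subset of the irreducible variety $G$ for general $x$, hence automatically dense.
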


The following is a direct application of Theorem~\ref{Thm.Rosenlicht}:

\begin{corollary}
	\label{Cor.Trivial_invariants}
	Assume that $G \subseteq \Bir(X)$ satisfies the hypothesis of Theorem~\ref{Thm.Rosenlicht}.
	Then $\kk(X)^G = \kk$ if and only if $X$ admits a dense open $G$-orbit.
\end{corollary}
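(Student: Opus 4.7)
The plan is to apply Theorem~\ref{Thm.Rosenlicht} and read off both implications directly from the resulting Rosenlicht quotient $\pi_0 \colon X_0 \to Y$: here $X_0 \subseteq X$ is open, dense, and $G$-stable, the fibres of $\pi_0$ are irreducible and coincide with the $G$-orbits in $X_0$, and $Y$ is irreducible (being the image of the irreducible $X_0$). Since $X_0$ is dense in $X$, we have $\kk(X_0) = \kk(X)$, and Lemma~\ref{Lem.geometric_quotient} yields $\kk(Y) = \kk(X_0)^G = \kk(X)^G$.

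For the direction $\kk(X)^G = \kk \Rightarrow$ existence of a dense open $G$-orbit, I would observe that $\kk(Y) = \kk$ forces the irreducible variety $Y$ to be a single point, whence $X_0$ is a single fibre of $\pi_0$, hence a single $G$-orbit; as $X_0$ is open and dense in $X$, this orbit is the desired one.

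For the converse, suppose $U \subseteq X$ is a dense open $G$-orbit, and pick $x \in U \cap X_0$ (nonempty, since both are dense in the irreducible $X$), so that $U = Gx$. Since $X_0$ is $G$-stable we get $U \subseteq X_0$. Thus $U$ is simultaneously open in $X_0$ (being open in $X$) and closed in $X_0$ (being the fibre $\pi_0^{-1}(\pi_0(x))$ of a morphism). Irreducibility of $X_0$ then forces $U = X_0$, so $\pi_0$ is constant and $Y$ is a single point, giving $\kk(X)^G = \kk(Y) = \kk$.

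There is essentially no obstacle beyond invoking Theorem~\ref{Thm.Rosenlicht}: the whole argument is a direct unpacking of the defining properties of a Rosenlicht quotient, together with the elementary fact that a nonempty clopen subset of an irreducible variety is the whole variety. The only bookkeeping point is the transition between $X$ and $X_0$, which is handled by $G$-stability on one side and density on the other.
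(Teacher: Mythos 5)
Your proof is correct and matches the paper's intent exactly: the paper offers no written proof, calling the corollary ``a direct application of Theorem~\ref{Thm.Rosenlicht}'', and your argument is precisely that unpacking (identify $\kk(Y)$ with $\kk(X)^G$, then read off ``$Y$ a point $\Leftrightarrow$ one fibre $\Leftrightarrow$ dense open orbit''). The only cosmetic point is that Lemma~\ref{Lem.geometric_quotient} is stated for normal $Y$, but the implication you need (geometric quotient $\Rightarrow \kk(Y)=\kk(X)^G$) holds without normality, and in any case $Y$ may be shrunk to be smooth as in the construction of the quotient.
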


As a further application we prove a 
generalization of \cite[Lemma~2.10]{KrReSa2021Is-the-affine-spac}:

\begin{corollary}
	\label{Cor.Centralizer}
	Let $G \subseteq \Bir(X)$ be a commutative 
	subgroup with $\kk(X)^G = \kk$ 
	that satisfies the assumptions of Theorem~\ref{Thm.Rosenlicht}.
	Then the centralizer of $G$ in $\Bir(X)$ is equal to $G$
	and $G$ is a connected algebraic subgroup of $\Bir(X)$ of dimension $\dim X$.
\end{corollary}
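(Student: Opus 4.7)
The plan is to first use the open dense orbit of $G$ to pin down stabilizers and compute the centralizer, and then to realize $G$ as an algebraic subset by combining this information with the algebraic subset $Z$ provided by Corollary~\ref{Cor.NiceOrbit}. By Corollary~\ref{Cor.Trivial_invariants}, $G$ admits an open dense orbit $U \subseteq X$. For $x_0 \in U$ and $g \in G_{x_0}$, commutativity gives $g(g'(x_0)) = g'(g(x_0)) = g'(x_0)$ for generic $g' \in G$, so $g$ fixes the dense subset $G x_0 = U$; hence $g = \id$, and the orbit map $G \to U$, $g \mapsto g(x_0)$, is injective where defined. For the centralizer equality, given $h \in \Bir(X)$ centralizing $G$, I would pick $x_0 \in \lociso(h) \cap U \cap h^{-1}(U)$ and write $h(x_0) = g_0(x_0)$ for the unique $g_0 \in G$ with this property. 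Then commutativity gives $h(g(x_0)) = g(h(x_0)) = g g_0(x_0) = g_0(g(x_0))$ for generic $g \in G$, so $h$ and $g_0$ coincide on a dense subset of $X$, forcing $h = g_0 \in G$.

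Next, Corollary~\ref{Cor.NiceOrbit} yields an irreducible algebraic subset $Z \subseteq G$, parametrized by a morphism $\rho \colon V \to \Bir(X)$ from an irreducible variety $V$, such that $Z x_0$ is dense in $U$ for general $x_0$. Then $Z Z^{-1}$ is the image of the morphism $V \times V \to \Bir(X)$, $(v_1, v_2) \mapsto \rho(v_1)\rho(v_2)^{-1}$, and I claim that $G = Z Z^{-1}$. Indeed, for any $g \in G$, both $g Z x_0$ and $Z x_0$ are dense constructible subsets of $U$ and thus intersect, giving $g z_1(x_0) = z_2(x_0)$ for some $z_1, z_2 \in Z$; by injectivity of the orbit map, this upgrades to $g z_1 = z_2$ in $G$, so $g = z_2 z_1^{-1} \in Z Z^{-1}$. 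Consequently, $G$ is an irreducible algebraic subset of $\Bir(X)$, in particular connected and finite-dimensional.

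For closedness of $G$, I would argue that for each $g \in G$ the set $\{h \in \Bir(X) : h g h^{-1} = g\}$ is the preimage of the closed point $\{g\}$ under the continuous conjugation map, hence closed in $\Bir(X)$; the centralizer of $G$ is an intersection of such sets and so is closed, which together with the equality $G = \mathrm{cent}_{\Bir(X)}(G)$ already established yields that $G$ itself is closed in $\Bir(X)$. By the structure result recalled in Section~\ref{Sec.Notation_Preliminaries} (see \cite[Theorem~1.3]{ReUrSa2024The-Structure-of-A}), $G$ therefore carries a unique structure of a connected algebraic group, under which $g \mapsto g(x_0)$ is a morphism $G \to U$ that is injective by triviality of stabilizers and whose image contains the dense subset $Z x_0$; hence $\dim G = \dim X$. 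The main technical point will be the identification $G = Z Z^{-1}$, which uses both the largeness of $Z$ (namely that $Z x_0$ is dense in $U$ for general $x_0$) and the triviality of stabilizers, the latter being responsible for promoting a set-theoretic coincidence at $x_0$ to the group-theoretic identity $g z_1 = z_2$ in $\Bir(X)$.
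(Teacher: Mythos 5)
Your proposal is correct and follows essentially the same route as the paper: an open dense orbit from the triviality of $\kk(X)^G$, triviality of stabilizers via commutativity, the identity $G = Z\circ Z^{-1}$ obtained by intersecting two dense constructible subsets of the orbit, and injectivity of the orbit map to get $\dim G = \dim X$ (the paper packages the first and third steps into Lemma~\ref{Lem.Dense_open_orbit}). The only cosmetic difference is that you deduce closedness of $G$ from closedness of centralizers rather than from the fact that algebraic subgroups of $\Bir(X)$ are automatically closed; both are valid.
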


\begin{lemma}
	\label{Lem.Dense_open_orbit}
	Let $Z \subseteq \Bir(X)$ be an irreducible algebraic subset such that there exist
	$x_0 \in X$ and an open dense subset $U \subseteq X$ with $U \subseteq Z x_0$.
	Moreover, assume that the elements of $Z$ commute pairwise. 
	Then:
	\begin{enumerate}[left=0pt]
		\item \label{Lem.Dense_open_orbit1} $\Cent_{\Bir(X)}(Z)$ is contained in $Z \circ Z^{-1}$.
		\item \label{Lem.Dense_open_orbit2} 
			If $x_0 \in \lociso(\varphi)$ for some $\varphi \in \Cent_{\Bir(X)}(Z)$
			and $\varphi(x_0) = x_0$, then $\varphi = \id_X$.
	\end{enumerate}
	
\end{lemma}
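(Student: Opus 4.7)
My plan is to prove (2) first and then deduce (1) by applying the same argument at a suitable translate of $x_0$. Fix a morphism $\rho \colon V \to \Bir(X)$ from an irreducible variety $V$ with image $Z$, and consider the orbit maps $\mu_y \colon V \dashrightarrow X$, $v \mapsto \rho(v)(y)$.

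For part (2), let $\varphi \in \Cent_{\Bir(X)}(Z)$ with $x_0 \in \lociso(\varphi)$ and $\varphi(x_0) = x_0$, and put $F \coloneqq \{y \in \lociso(\varphi) : \varphi(y) = y\}$, which is closed in $\lociso(\varphi)$. There exists a dense open $V' \subseteq V$ such that, for every $v \in V'$, the rational map $\mu_{x_0}$ is defined at $v$, $x_0 \in \lociso(\rho(v))$, and $\rho(v)(x_0) \in \lociso(\varphi)$; the last condition is open and nonempty, because $\mu_{x_0}(V) \supseteq U$ meets the open dense subset $\lociso(\varphi) \subseteq X$. For $v \in V'$, set $z = \rho(v)$; evaluating the identity $\varphi z = z \varphi$ of birational maps at $x_0$ yields $\varphi(z(x_0)) = z(\varphi(x_0)) = z(x_0)$, so $z(x_0) \in F$. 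Hence $\mu_{x_0}(V') \subseteq F$, and since $\overline{\mu_{x_0}(V')} = \overline{\mu_{x_0}(V)} \supseteq \overline{U} = X$, the set $F$ is dense in $\lociso(\varphi)$. Being also closed, $F = \lociso(\varphi)$, so $\varphi$ is the identity on its domain of definition, and thus $\varphi = \id_X$.

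For part (1), given $\varphi \in \Cent_{\Bir(X)}(Z)$, choose $x_1$ in the nonempty open subset $U \cap \lociso(\varphi) \cap \varphi^{-1}(U) \subseteq X$, and then pick $z_1, z_2 \in Z$ that are local isomorphisms at $x_0$ with $z_1(x_0) = x_1$ and $z_2(x_0) = \varphi(x_1)$; this is possible because $\mu_{x_0}(V) \supseteq U$ contains both $x_1$ and $\varphi(x_1)$. Then $z_1^{-1}$ and $z_2^{-1}$ are local isomorphisms at $x_1$ and $\varphi(x_1)$ respectively. Put $\psi \coloneqq z_2 \circ z_1^{-1} \in Z \circ Z^{-1}$; so $\psi$ is a local isomorphism at $x_1$ with $\psi(x_1) = \varphi(x_1)$. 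The pairwise commutation of $Z$ implies that the subgroup of $\Bir(X)$ generated by $Z$ is commutative, so $\psi$ centralizes $Z$. Moreover, for every $z \in Z$ the identity $z \circ z_1 = z_1 \circ z$ of birational maps gives $\mu_{x_1} = z_1 \circ \mu_{x_0}$, which is dominant, so $\overline{Zx_1} = X$. Setting $\chi \coloneqq \psi^{-1} \circ \varphi \in \Cent_{\Bir(X)}(Z)$, we have $x_1 \in \lociso(\chi)$ and $\chi(x_1) = x_1$. The argument of part (2), applied verbatim to $(x_1, \chi)$ in place of $(x_0, \varphi)$ (note that only the density of $\overline{Zx_0}$ in $X$ was used), gives $\chi = \id_X$. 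Therefore $\varphi = \psi \in Z \circ Z^{-1}$.

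The main obstacle is the bookkeeping of local-isomorphism loci: one must choose $x_1$ inside an intersection of several dense open subsets of $X$ and pick parameters $v_1, v_2 \in V$ with $\rho(v_i) = z_i$ inside a corresponding dense open subset of $V$, so that every composition and evaluation occurring in the argument is actually legitimate. Once this is arranged, the mathematical content reduces to the observation that the commutation $\varphi z = z \varphi$ propagates a single fixed point of $\varphi$ along the entire orbit, which is dense in $X$.
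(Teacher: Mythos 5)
Your proof is correct and follows essentially the same route as the paper: you establish \eqref{Lem.Dense_open_orbit2} by using the commutation relation to propagate the fixed point $x_0$ along the dense orbit, and you reduce \eqref{Lem.Dense_open_orbit1} to \eqref{Lem.Dense_open_orbit2} by composing $\varphi$ with an element of $Z \circ Z^{-1}$ that matches it at a point (the paper does this at $x_0$ itself via $f^{-1} \circ \varphi_0 \circ h$, which spares it your extra verification that $Z x_1$ is dense). The one imprecision is your detour through the parametrization in part (2): the claim $\mu_{x_0}(V) \supseteq U$ is not what the hypothesis gives you, since a point of the set-theoretic orbit $Z x_0$ need not lie in the image of the rational orbit map $\mu_{x_0}$ (compare the distinction between $V_d^\ast x$ and $\rho_d(V_d)x$ in Remark~\ref{Rem.NiceOrbit}); this is harmless here because arguing pointwise from $U \subseteq Z x_0$, as the paper does, yields the dense set of fixed points directly.
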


\begin{proof}
	Let $\varphi_0 \in \Cent_{\Bir(X)}(Z)$. As 
	$\varphi_0(\lociso(\varphi_0) \cap U) \cap U$
	is non-empty, there exist $f, h \in Z$  such that 
	$x_0 \in \lociso(f) \cap \lociso(h)$, $h(x_0) \in \lociso(\varphi_0)$ 
	and $f(x_0) = \varphi_0(h(x_0))$. 
	Hence, $f^{-1} \circ \varphi_0 \circ h$ is a local isomorphism at $x_0$
	and maps $x_0$ onto itself. In order to show~\eqref{Lem.Dense_open_orbit1}
	it is thus enough to show~\eqref{Lem.Dense_open_orbit2} 
	(as $f^{-1} \circ \varphi_0 \circ h \in \Cent_{\Bir(X)}(Z)$).

	Let $\varphi \in \Cent_{\Bir(X)}(Z)$ such that $\varphi$ is a local isomorphism at $x_0$
	and $\varphi(x_0) = x_0$. Take
	$u \in U \cap \lociso(\varphi)$. There exists 
	$g \in Z$ with $x_0 \in \lociso(g)$ and $u = g(x_0)$. Hence,
	$\varphi(u) = \varphi(g(x_0)) = g(\varphi(x_0)) = g(x_0) = u$.
	This shows that $\varphi$ is the identity on $X$
	and hence~\eqref{Lem.Dense_open_orbit2}.
\end{proof}

\begin{proof}[Proof of Corollary~\ref{Cor.Centralizer}]
	Since $\kk = \kk(X)^{G}$, there exists an open dense $G$-orbit in $X$,
	see Corollary~\ref{Cor.NiceOrbit}. Using Corollary~\ref{Cor.NiceOrbit} there exist
	$x_0 \in X$ and an irreducible algebraic subset $Z \subseteq \Bir(X)$ that is contained in $G$
	such that $Z x_0$ contains an open dense subset of $X$. Hence, we may apply 
	Lemma~\ref{Lem.Dense_open_orbit}\eqref{Lem.Dense_open_orbit1}
	to $Z$ and $x_0$ in order to conclude 
	\[
		G \subseteq \Cent_{\Bir(X)}(G) \subseteq \Cent_{\Bir(X)}(Z) 
		\subseteq Z \circ Z^{-1} \subseteq G \, .
	\]
	This shows that $G = \Cent_{\Bir(X)}(G)$ is an irreducible 
	algebraic subset of $\Bir(X)$ and hence $G$ is a closed connected 
	algebraic subgroup of $\Bir(X)$. Again by Corollary~\ref{Cor.NiceOrbit}, 
	there exist $x_1 \in G x_0$ in the open $G$-orbit
	and an open dense subset $V \subseteq G$ such that $V \to G x_0$, $g \mapsto g(x_1)$
	is a dominant morphism.	
	By Lemma~\ref{Lem.Dense_open_orbit}\eqref{Lem.Dense_open_orbit2} this morphism is injective
	and thus $\dim X = \dim G$.
\end{proof}

Let $\pi \colon X \to Y$ be a dominant morphism of irreducible varieties
such that the geometric generic fibre $X_K$ is an irreducible $K$-variety, 
where $K$ denotes an algebraic closure of $\kk(Y)$. In the following version of 
Corollary~\ref{Cor.Centralizer}
we denote by $\varepsilon \colon \Bir(X/Y) \to \Bir_K(X_K)$ the associated pull-back homomorphism.

\begin{corollary}
	\label{Cor.Centralizer_pull-back}
	Let $\pi \colon X \to Y$ be a Rosenlicht quotient
	for a closed connected commutative subgroup $H \subseteq \Bir(X)$ 
	such that the geometric generic fibre $X_K$ 
	is an irreducible $K$-variety (which always holds if $\car(\kk) = 0$). 
	Then $\overline{\varepsilon(H)} \subseteq \Bir_K(X_K)$ is a connected, self-centra\-lizing
	algebraic subgroup of dimension $\dim X - \dim Y$.
\end{corollary}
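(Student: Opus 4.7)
The plan is to reduce the statement to Corollary~\ref{Cor.Centralizer}, applied to $\overline{\varepsilon(H)}$ as a subgroup of $\Bir_K(X_K)$ (viewed with its $K$-Zariski topology). That corollary, granted its hypotheses, produces precisely a self-centralizing connected algebraic subgroup of dimension equal to $\dim_K X_K = \dim X - \dim Y$. What remains is therefore to verify its three hypotheses for $\overline{\varepsilon(H)}$: commutativity, triviality of the invariant field $K(X_K)^{\overline{\varepsilon(H)}}$, and the generation condition from Theorem~\ref{Thm.Rosenlicht}.

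Since the fibres of the Rosenlicht quotient $\pi$ are $H$-orbits, $H$ acts trivially on $Y$, so $H \subseteq \Bir(X/Y)$ and $\varepsilon$ is defined on $H$. As $H$ is commutative and connected and $\varepsilon$ is a continuous group homomorphism, both $\varepsilon(H)$ and its closure $\overline{\varepsilon(H)}$ are commutative (using the continuity of multiplication and inversion on $\Bir_K(X_K)$) and connected. For the invariant field, Lemma~\ref{Lem.geometric_quotient} (after arranging $Y$ to be normal, which we may) gives $\kk(X)^H = \kk(Y)$; combined with formula~\eqref{Eq.invariants} this yields
\[
K(X_K)^{\overline{\varepsilon(H)}} = \kk(X)^H \otimes_{\kk(Y)} K = \kk(Y) \otimes_{\kk(Y)} K = K \, .
\]

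For the generation condition, I would filter $H$ by an increasing sequence of irreducible closed algebraic subsets $W_1 \subseteq W_2 \subseteq \cdots$ containing the identity, with associated morphisms $\rho_d \colon W_d \to \Bir(X)$, and then base-change the corresponding algebraic families $\theta_d \colon W_d \times X \dashrightarrow W_d \times X$ (which preserve the fibration to $Y$ since $\rho_d(W_d) \subseteq \Bir(X/Y)$) through the generic point of $Y$ up to $K$. Since $\kk$ is algebraically closed, each $W_d$ is geometrically irreducible, so $(W_d)_K$ is an irreducible $K$-variety, and the resulting $K$-family produces a morphism $(W_d)_K \to \Bir_K(X_K)$ whose image is $\varepsilon(\rho_d(W_d))$. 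The closure of this image is then an irreducible algebraic subset of $\Bir_K(X_K)$ containing the identity, and the union of these closures generates $\overline{\varepsilon(H)}$. The main technical obstacle is precisely this base-change step: making rigorous the passage from the $\kk$-family $\theta_d$ to a $K$-family on $(W_d)_K \times X_K$ and identifying the induced morphism with $\varepsilon \circ \rho_d$. Once this is in place, Corollary~\ref{Cor.Centralizer} applies verbatim over $K$ and delivers the conclusion.
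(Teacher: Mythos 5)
Your overall strategy coincides with the paper's: reduce to Corollary~\ref{Cor.Centralizer} applied to $\overline{\varepsilon(H)}$, using the continuity of $\varepsilon$ and the identity~\eqref{Eq.invariants} to get $K(X_K)^{\overline{\varepsilon(H)}} = K$. The one place where you diverge --- and where you flag a ``main technical obstacle'' --- is unnecessary. To verify the generation hypothesis of Theorem~\ref{Thm.Rosenlicht} for $\overline{\varepsilon(H)}$ you do not need to base-change the algebraic families $\theta_d$ from $\kk$ to $K$ at all: it suffices to observe that $\overline{\varepsilon(H)}$ is a \emph{closed connected} subgroup of $\Bir_K(X_K)$ (connectedness follows from the continuity of $\varepsilon$ and the connectedness of $H$, and closedness is by definition of the closure), and the paper has already recalled, right after Theorem~\ref{Thm.Rosenlicht} and in Section~\ref{Sec.Notation_Preliminaries}, that any closed connected subgroup can be filtered by irreducible closed algebraic subsets and hence satisfies the hypotheses of that theorem. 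This is exactly how the paper's two-line proof proceeds. If you do insist on the base-change route, note a small inaccuracy: the image of $(W_d)_K \to \Bir_K(X_K)$ is in general strictly larger than $\varepsilon(\rho_d(W_d))$, since $(W_d)_K$ has $K$-points not coming from $W_d(\kk)$; what is true is that the closures of the two images agree (the $\kk$-points remain dense after base change to $K$), and one would then have to add a short argument identifying the group generated by these closures with $\overline{\varepsilon(H)}$ (e.g.\ by applying Corollary~\ref{Cor.Centralizer} to that intermediate group, which is closed by its conclusion and squeezed between $\varepsilon(H)$ and $\overline{\varepsilon(H)}$). All of this is avoidable by the simpler observation above.
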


\begin{proof}
	Note that $\overline{\varepsilon(H)}$ is connected by the continuity of $\varepsilon$
	and that the invariant field of $K(X_K)$ under 
	$\overline{\varepsilon(H)}$ is equal to $K$ by~\eqref{Eq.invariants}. 
	Hence, Corollary~\ref{Cor.Centralizer} implies the statement.
\end{proof}

\section{Descent of unipotent elements}
\label{sec.Descent}
In this section $\kk$ denotes an algebraically closed field of characteristic zero.
Moreover, denote by $\pi \colon X \to Y$ a dominant morphism of irreducible varieties,
let $K$ be an algebraic closure of $\kk(Y)$ and  denote by $X_K$ the 
geometric generic fibre of $\pi$.
We assume that $X_K$ is an irreducible $K$-variety.
The goal of this section is a descent result for unipotent algebraic subgroups in birational transformation groups.

\medskip

Let $\Gamma = \Aut_{\kk(Y)}(K)$ be the Galois group of the Galois extension 
$\kk(Y) \subseteq K$. For any $K$-variety $V$ and $\tau \in \Gamma$ 
we consider the pull-back of $V \to \Spec(K)$
via $\Spec(\tau)$
\[
	\xymatrix@R=15pt{
		V^{\tau} \ar[d] \ar[rr]^-{\tau_{V}} && V \ar[d] \\
		\Spec(K) \ar[rr]^-{\Spec(\tau)} && \Spec(K)
	}
\]
in the category of schemes 
($\tau_{V}$ and $\Spec(\tau)$ are only isomorphisms of schemes).
If $V$ is the pull-back of a $\kk(Y)$-variety to $K$, then we may and will identify 
$V^\tau$ and $V$. For example, $\tau_{X_K}$ is an automorphism of schemes of $X_K$,
as $X_K$ is the pull-back of the generic fibre $X_{\kk(Y)} \to \Spec(\kk(Y))$ to $K$.
We get a natural right-action of $\Gamma$ on $\Bir_K(X_K)$ that is given by
\[
	\varphi^\tau \coloneqq \tau_{X_K} \circ \varphi \circ \tau_{X_K}^{-1} \, ,
\]
where $\tau \in \Gamma$ and $\varphi \in \Bir_K(X_K)$. Then, the fixed points in 
$\Bir_K(X_K)$ under this $\Gamma$-action is equal to the image of the pull-back
homomorphism $\Bir(X/Y) \to \Bir_K(X_K)$, since the function field $K(X_K)$ is
the tensor product $\kk(X) \otimes_{\kk(Y)} K$. 
Moreover, if $\tau \in \Gamma$ and 
$\rho \colon V \to \Bir_K(X_K)$ is a morphism corresponding to an algebraic family
$\theta$ of $K$-birational transformations of $X_K$ parametrized by $V$, we denote by 
$\rho^{\tau} \colon V^\tau \to \Bir_K(X_K)$ the morphism associated to the algebraic 
family $(\tau_V \times \tau_{X_K})^{-1} \circ \theta \circ (\tau_V \times \tau_{X_K})$.
Thus, we get the following commutative diagram
\begin{equation}
	\begin{gathered}
	\label{Eq.Alg_families_and_Galois_action}
	\xymatrix@R=15pt{
		V^\tau \ar[d]_-{\tau_V} \ar[r]^-{\rho^{\tau}} & \Bir_K(X_K) 
		\ar[d]^{\varphi \mapsto \varphi^{\tau}} \\
		V \ar[r]^-{\rho} & \Bir_K(X_K) \, .
	}
	\end{gathered}
\end{equation}
In particular, for every $\tau \in \Gamma$, 
the group automorphism $\varphi \mapsto \varphi^\tau$ of $\Bir_K(X_K)$ is a homeomorphism, 
i.e.~$\Gamma$ acts through continuous group automorphisms on $\Bir_K(X_K)$.

If $G$ is an algebraic subgroup of $\Bir_K(X_K)$ that is preserved by $\Gamma$ and if 
$\rho \colon G \to \Bir_K(X_K)$ is the corresponding injective morphism, then we get
for all $\tau \in \Gamma$ a unique isomorphism (over $K$) of algebraic groups 
$\iota_\tau \colon G^\tau \simeq G$ such that $\rho^\tau = \rho \circ \iota_\tau$
by using~\eqref{Eq.Alg_families_and_Galois_action} 
and the universal property of algebraic subgroups of birational transformation
groups. In particular, we may identify $\rho^\tau \colon G^\tau \to \Bir_K(X_K)$ and
$\rho \colon G \to \Bir_K(X_K)$, and we get for all $\tau \in \Gamma$
the following commutative diagram
\begin{equation}
	\label{Eq.Alg_subgroup_of_Bir_preserved_by_Gamma}
	\begin{gathered}
	\xymatrix@R=15pt{
		G \ar[d]_-{\tau_G} \ar[r]^-{\rho} & \Bir_K(X_K) 
		\ar[d]^{\varphi \mapsto \varphi^{\tau}} \\
		G \ar[r]^-{\rho} & \Bir_K(X_K) \, .
	}
	\end{gathered}
\end{equation}
Hence, $\Gamma$ acts naturally on the right on $G$ through $\kk(Y)$-isomorphisms that are
group automorphisms, and $\rho \colon G \to \Bir_K(X_K)$
is then $\Gamma$-equivariant. If $G$ is affine, then the $\Gamma$-action on $G$ induces 
a left $\Gamma$-action on the coordinate ring $K[G]$, and we denote
$G \aquot \Gamma = \Spec(K[G]^\Gamma)$. Then $G \aquot \Gamma$ has a natural morphism to 
$\Spec(\kk(Y))$ given by $\kk(Y) = K^\Gamma \subseteq K[G]^\Gamma$.
We get a natural morphism $G \to (G \aquot \Gamma)_K$ 
induced by $\kk[G]^\Gamma \otimes_{\kk(Y)} K \to K[G]$. We say that $G \aquot \Gamma$ is 
a \emph{$\kk(Y)$-from of $G$} if
$G \to (G \aquot \Gamma)_K$ is a  $K$-isomorphism and the 
group multiplication $G \times G \to G$ and the inverse morphism $G \to G$
descend to $\kk(Y)$-morphisms 
$G \aquot \Gamma \times_{\kk(Y)} G \aquot \Gamma \to G \aquot \Gamma$ and
$G \aquot \Gamma \to G \aquot \Gamma$.

\begin{proposition}
	\label{Prop.Descent_aff_alg_group}
	Let $G \subseteq \Bir_K(X_K)$ be an affine 
	algebraic subgroup that is preserved under $\Gamma$. 
	Then, $G \aquot \Gamma$ is a $\kk(Y)$-form of $G$.
\end{proposition}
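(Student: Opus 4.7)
The plan is to reduce the descent along the infinite extension $K/\kk(Y)$ to classical finite Galois descent. Since $G$ is affine and of finite type over $K$, the Hopf algebra $K[G]$ is finitely generated; moreover, the inclusion $\rho \colon G \hookrightarrow \Bir_K(X_K)$ corresponds to a $K$-algebraic family of birational transformations of $X_K$ parametrized by $G$, described by finitely many rational data in $K$. Taking the Galois closure of the union of all these finitely many coefficients, together with those defining the Hopf-algebra structure constants of $K[G]$, produces a finite Galois extension $\kk(Y) \subseteq L \subseteq K$, an affine algebraic $L$-group $G_L$ with $G \cong G_L \otimes_L K$, and an $L$-morphism $\rho_L \colon G_L \to \Bir_L(X_L)$ whose base change to $K$ is $\rho$.

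The key step is to verify that the $\Gamma$-action on $G$ descends to a $\Gamma_0$-descent datum on $G_L$, where $\Gamma_0 \coloneqq \Gal(L/\kk(Y))$. For every $\tau \in \Gal(K/L)$, the pull-back $\rho^\tau \colon G^\tau \to \Bir_K(X_K)$ of~\eqref{Eq.Alg_families_and_Galois_action} coincides, via the canonical identification $G^\tau = G$ induced by the descent of $G$ to $L$, with $\rho$. The uniqueness part of the universal property used to produce diagram~\eqref{Eq.Alg_subgroup_of_Bir_preserved_by_Gamma} (cf.~\cite[Theorem~1.3]{ReUrSa2024The-Structure-of-A}) then forces $\tau_G$ to be the canonical semi-linear automorphism of $G_L \otimes_L K$ induced by $\tau$ acting on the $K$-factor. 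In particular, $\Gal(K/L)$ acts trivially on the $L$-form $L[G_L] \subseteq K[G]$, and the semi-linear $\Gamma$-action on $K[G] = L[G_L] \otimes_L K$ is induced by the natural $\Gamma_0$-action on $L[G_L]$ together with the Galois action on $K$.

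Classical finite Galois descent then applies: $A_0 \coloneqq L[G_L]^{\Gamma_0}$ is a $\kk(Y)$-Hopf algebra with $A_0 \otimes_{\kk(Y)} L \xrightarrow{\sim} L[G_L]$, whence $A_0 \otimes_{\kk(Y)} K \xrightarrow{\sim} K[G]$. Moreover,
\[
K[G]^{\Gamma} \;=\; \bigl(K[G]^{\Gal(K/L)}\bigr)^{\Gamma_0} \;=\; L[G_L]^{\Gamma_0} \;=\; A_0,
\]
using that $K[G]^{\Gal(K/L)} = L[G_L]$ by the previous step. Thus $G \aquot \Gamma = \Spec(A_0)$ is the desired $\kk(Y)$-form of $G$, and the group multiplication and inverse on $G$, being $\Gamma$-equivariant, restrict to the descended Hopf structure on $A_0$. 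The main obstacle lies in the second paragraph: one must carefully use the uniqueness in the universal property of algebraic subgroups of $\Bir_K(X_K)$ to conclude that, once $\rho$ has been descended to $L$, the abstract descent datum $\tau_G$ for $\tau \in \Gal(K/L)$ is forced to coincide with the canonical semi-linear automorphism. Once this rigidity is established, the remaining arguments are a direct application of standard Galois descent for affine group schemes.
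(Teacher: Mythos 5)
Your proof is correct, but it takes a genuinely different route from the paper's. Both arguments reduce the statement to finite Galois descent over an intermediate finite Galois extension $\kk(Y) \subseteq L \subseteq K$, and both must identify the abstract descent datum $\tau \mapsto \tau_G$ of diagram~\eqref{Eq.Alg_subgroup_of_Bir_preserved_by_Gamma} with the canonical semilinear action for $\tau \in \Aut_L(K)$; the difference lies in how that identification is achieved. You spread out the Hopf algebra $K[G]$ and the algebraic family $\theta$ defining $\rho$ over a finite Galois extension $L$ (legitimate, since $K/\kk(Y)$ is algebraic and all data are of finite type over $K$), compute directly that $\rho^\tau = \rho$ under the canonical identification $G^\tau \cong G_L \otimes_L K \cong G$, and use the uniqueness of $\iota_\tau$ (which follows from the injectivity of $\rho$) to force $\tau_G = \id_{G_L} \otimes \tau$; the rest is Speiser-type descent for $\Gal(L/\kk(Y))$. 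The paper never descends $\theta$ itself: it applies Weil regularization to obtain a projective model with a regular $G$-action, embeds $G$ equivariantly into a power $Y^n$ of that model via an orbit map through points defined over a suitable finite Galois $L$, and deduces from the $\Gamma$-equivariance of this embedding that $\jmath(G)$ is an $\Aut_L(K)$-stable locally closed subvariety of an $L$-variety, hence defined over $L$ by \cite[Ch.~AG, Theorem~14.4]{Bo1991Linear-algebraic-g}; this yields $K[G]^{\Aut_L(K)} \otimes_L K = K[G]$, after which the same finite descent step concludes. Your route is shorter and avoids regularization, at the cost of the bookkeeping needed to check that $\theta$, the group structure of $G$, and the identification $G \cong G_L \otimes_L K$ all spread out compatibly over a single $L$ and that the computation $\rho^\tau = \rho$ is valid at the level of algebraic families; the paper trades this for the geometric input of Weil's theorem and Borel's rationality criterion, which makes the matching of the descent datum with the canonical action visible automatically, since the orbit map intertwines $\tau_G$ with the manifestly canonical automorphism $(\tau_Y)^n$.
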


\begin{proof}
	Denote by $\theta \colon G \times X_K \dashrightarrow G \times X_K$ the algebraic family 
	associated to $G \subseteq \Bir_K(X_K)$. 
	Using the Weil regularization theorem (see e.g.~\cite{Br2022On-models-of-algeb})
	there exists a projective variety $Y \subseteq \PP^n$ together with a faithful regular 
	right $G$-action, and a $K$-birational $G$-equivariant map $f \colon X_K \dashrightarrow Y$.
	We may choose  $x_1, \ldots, x_n \in \lociso(f) \cap \pr_2(\lociso(\theta))$, where $\pr_2$
	is the projection onto $X_K$, such that 
	\[
		\iota \colon G \dashrightarrow (X_K)^n \, , \quad
		g \dashmapsto (g(x_1), \ldots, g(x_n))
	\]
	is a rational map that induces a birational map onto its image,
	see~\cite[Lemma~5.1]{ReUrSa2024The-Structure-of-A}.
	Note that the closure of the image of $\iota \colon G \dashrightarrow (X_K)^n$ intersects 
	$G x_1 \times \cdots \times G x_n$.
	Moreover, $G x_1 \times \cdots \times G x_n \subseteq (X_K)^n$ is contained
	in $\lociso(f)^n$, as $x_1, \ldots, x_n \in \lociso(f)$
	and $\lociso(f)$ is $G$-stable. Hence, the composition 
	$\jmath \coloneqq f^n \circ \iota \colon G \dashrightarrow Y^n$ is well-defined
	and equal to $g \dashmapsto (g(f(x_1)), \ldots, g(f(x_n)))$. Since 
	$G$ acts regularly on $Y$, the map $\jmath \colon G \to Y^n$ is a morphism
	that is birational onto the closure of its image. This implies that the 
	stabilizer of $(f(x_1), \ldots, f(x_n))$ in $G$ is trivial, i.e.~$\jmath$ is injective.
	Hence, $\jmath$ defines an isomorphism of $G$ onto the $G$-orbit
	$\jmath(G) \subseteq Y^n$.

	We may choose an intermediate field $\kk(Y) \subseteq L \subseteq K$ such that $\kk(Y) \subseteq L$
	is a finite Galois extension and the following holds: the points
	$x_1, \ldots x_n \in X_K$ and the closed subset $Y \subseteq \PP^n$ are defined over $L$,
	and $f \colon X_K \dashrightarrow Y$
	is defined over $L$ as well. 
	By using~\eqref{Eq.Alg_subgroup_of_Bir_preserved_by_Gamma}, 
	for all $\tau \in \Aut_{L}(K) \subseteq \Gamma$ the following diagram
	\begin{equation}
	\label{Eq.Comm.diagram.Descent}
	\begin{gathered}
	\xymatrix@R=15pt{
		G \ar[d]_-{\tau_G} \ar@/^1pc/[rrrr]^-{\jmath}
		\ar@{-->}[rr]_-{\iota} && (X_K)^n 
		\ar[d]^-{(\tau_{X_K})^n} \ar@{-->}[rr]_-{f^n} && Y^n \ar[d]^{(\tau_Y)^n} \\
		G \ar@/_1pc/[rrrr]_-{\jmath} 
		\ar@{-->}[rr]^-{\iota} && (X_K)^n \ar@{-->}[rr]^-{f^n} && Y^n
	}
	\end{gathered}
	\end{equation}
	commutes. The commutativity of~\eqref{Eq.Comm.diagram.Descent} 
	implies that $(\tau_Y)^n$ preserves
	$\jmath(G)$ and its closure $\overline{\jmath(G)}$ for all $\tau \in \Aut_L(K)$. 
	Hence, $\overline{\jmath(G)}$ and $\overline{\jmath(G)} \setminus \jmath(G)$ are 
	defined over $L$ (note that $\jmath(G)$ is open in its closure), 
	see e.g.~\cite[Ch. AG, Theorem~14.4]{Bo1991Linear-algebraic-g}.
	This implies that the affine $K$-variety $\jmath(G)$ is also defined over $L$, i.e.
	\begin{equation}
		\label{Eq.Invaraint_Ring_intermediate}
		K[\jmath(G)]^{\Aut_L(K)} \otimes_L K = K[\jmath(G)] \, .
	\end{equation}
	Again, by the commutativity of~\eqref{Eq.Comm.diagram.Descent} we get
	$K[G]^{\Aut_L(K)} \otimes_L K = K[G]$. Since $\Aut_{\kk(Y)}(L)$
	is finite, we get now by \cite[Ch. AG, \S14.2]{Bo1991Linear-algebraic-g} that
	\[
		(K[G]^{\Aut_L(K)})^{\Aut_{\kk(Y)}(L)} \otimes_{\kk(Y)} L = K[G]^{\Aut_L(K)} \, ,
	\]
	which implies together with~\eqref{Eq.Invaraint_Ring_intermediate} 
	that $K[G]^\Gamma \otimes_{\kk(Y)} K = K[G]$. Now, the statement follows from the fact that 
	$\Gamma$ acts through group automorphisms on $G$.
\end{proof}

\begin{proposition}
	\label{Prop.Descent_Ga}
	Let $\sigma \in \Bir(X/Y)$. Then 
	$\sigma_K \in \Bir_K(X_K)$ is unipotent if and only if $\sigma \in \Bir(X/Y)$ is unipotent.
\end{proposition}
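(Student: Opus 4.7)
The plan is to prove both directions by descending a $\Gamma$-stable $\GG_a$-subgroup via Proposition~\ref{Prop.Descent_aff_alg_group}, combined with a rescaling trick. For the easy direction, suppose $\sigma$ is contained in the image of a rational $\GG_a$-action $\alpha \colon \GG_a \times X \dashrightarrow X$ with induced morphism $\rho \colon \GG_a \to \Bir(X)$. The subgroup $\Bir(X/Y) \subseteq \Bir(X)$ is closed, being cut out by the conditions $\varphi^* f = f$ for $f \in \pi^* \kk(Y)$, so $\rho^{-1}(\Bir(X/Y))$ is closed in $\GG_a$. Since it contains all integers (as $\sigma^n \in \Bir(X/Y)$) and $\ZZ$ is Zariski-dense in $\GG_a$, we conclude that $\rho$ factors through $\Bir(X/Y)$. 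Therefore $\alpha$ is $\pi$-relative and base-changes to a $\GG_{a,K}$-action on $X_K$ whose image contains $\sigma_K$, proving this direction.

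For the converse, assume $\sigma_K \neq \id$ is unipotent and choose any $1$-dimensional $\GG_{a,K}$-subgroup $H_K \subseteq \Bir_K(X_K)$ containing $\sigma_K$. The first key observation is that $H_K$ is automatically $\Gamma$-stable: for any $\tau \in \Gamma$, the subgroup $H_K^\tau$ is again a $1$-dimensional $\GG_{a,K}$-subgroup, and $H_K \cap H_K^\tau$ is a closed subgroup of $H_K \cong \GG_{a,K}$ containing $\sigma_K = \sigma_K^\tau \neq \id$; in characteristic zero the only closed subgroups of $\GG_{a,K}$ are $\{\id\}$ and the whole group, forcing $H_K = H_K^\tau$. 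By Proposition~\ref{Prop.Descent_aff_alg_group}, $H_K$ admits a $\kk(Y)$-form $H$; since forms of $\GG_a$ in characteristic zero are trivial (Hilbert~90 applied to $\Aut(\GG_{a,K}) = \GG_{m,K}$), we have $H \cong \GG_{a,\kk(Y)}$. The $\Gamma$-equivariance of the inclusion $H_K \hookrightarrow \Bir_K(X_K)$ then ensures the associated algebraic family descends to a $\kk(Y)$-rational action of $\GG_{a,\kk(Y)}$ on $X_{\kk(Y)}$.

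Finally, we spread this $\kk(Y)$-rational action over an open subset of $Y$ to obtain a rational $\GG_a$-action $\alpha \colon \GG_a \times X \dashrightarrow X$ over $\kk$ preserving $\pi$. The $\Gamma$-fixedness of $\sigma_K$ implies that the parameter $t_0 \in H_K(K) = K$ with $\sigma_K = \alpha_K(t_0, -)$ lies in $K^\Gamma = \kk(Y)$, so, viewing $t_0$ as a rational function on $Y$, we have $\sigma(x) = \alpha(t_0(\pi(x)), x)$. This does not yet express $\sigma$ as the image of a single $\kk$-parameter in a genuine $\GG_a$-action, which is the main obstacle. To overcome it we rescale: the rational map $\beta(s, x) := \alpha(s \cdot t_0(\pi(x)), x)$ is itself a $\GG_a$-action on $X$, where the action axiom holds because $\alpha$ preserves $\pi$ and hence $t_0 \circ \pi$ is constant along $\alpha$-orbits, and $\beta(1, -) = \sigma$, showing that $\sigma$ is unipotent in $\Bir(X)$.
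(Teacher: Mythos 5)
Your proof is correct, but for the main direction it takes a genuinely different route from the paper's. The paper reparametrizes the $\GG_{a,K}$-action so that $\sigma_K$ is its time-$1$ map, passes to the associated $K$-derivation $\partial$ of $K(X_K)$, and shows directly that $\exp(t\partial)$ maps $\kk(X)$ into $\kk(X)(t)$: the inputs are that the integer-time maps $\sigma_K^{\,n}$ are defined over $\kk(Y)$, a Galois trick to clear denominators, and a Vandermonde-matrix argument to pass from integer times to the whole flow. You instead note that in characteristic zero the one-parameter unipotent subgroup $H_K$ containing $\sigma_K$ is unique (any two such intersect in a nontrivial closed subgroup of $\GG_{a,K}$), hence $\Gamma$-stable, descend it via Proposition~\ref{Prop.Descent_aff_alg_group} and the triviality of $\kk(Y)$-forms of $\GG_a$, and then deal with the genuine obstacle that $\sigma$ is only the time-$t_0$ map for a non-constant $t_0 \in \kk(Y)^{\ast}$ by the fiberwise rescaling $\beta(s,x) = \alpha(s\,t_0(\pi(x)),x)$. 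This rescaling is valid and is precisely where $\GG_a$ differs from $\GG_m$ (there one would need $t_0^{\,s}$, which is not algebraic --- consistent with the counterexample given after the proposition). Your route is more structural, reuses Proposition~\ref{Prop.Descent_aff_alg_group} (which the paper only exploits later, in Corollary~\ref{Cor.Existence_unipotent}), and isolates where characteristic zero and additivity enter; its cost is that the descent of the action map to $\kk(Y)$ and the spreading-out over $Y$ are asserted rather than carried out (they require the reduction to a finite Galois subextension, as in the paper's own arguments, but are routine). Your density argument for the easy direction --- $\Bir(X/Y)$ is closed and $\ZZ$ is Zariski-dense in $\GG_a$, so the whole one-parameter group lies over $Y$ --- also supplies a justification for a step the paper asserts without comment.
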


Note that Proposition~\ref{Prop.Descent_Ga} does not hold 
if we replace $\GG_a$ by $\GG_m$.
Indeed, consider e.g.~$X = \AA^2$, $Y = \AA^1$,
$\pi \colon \AA^2 \to \AA^1$, $(x, y) \mapsto y$ and let
$\sigma \colon \AA^2 \to \AA^2$ be
defined by $\sigma(x, y) = (yx, y)$. Then 
$X_K = \AA^1_{K}$, $\sigma_K$
is given by $x \mapsto yx$, and it is contained in the image of 
$\rho_{\alpha} \colon \GG_{m, K} \to \Bir_K(\AA^1_K)$, where $\alpha$ is
the $\GG_{m, K}$-action on
$\AA^1_K$ given by 
\[
	\begin{array}{rcl}
		\GG_{m, K} \times_{K} \AA^1_K &\to& \AA^1_K \\
		(t, x) &\mapsto& tx \, .
	\end{array}
\]
However, $\sigma$ cannot be in the image of a rational algebraic group-action on $\AA^2$,
since the subgroup generated by $\sigma$ in $\Bir(\AA^2)$ is not of bounded degree
(see e.g.~\cite[Lemma~2.19]{BlFu2013Topologies-and-str}).

\medskip

For the proof of Proposition~\ref{Prop.Descent_Ga} we use an algebraic characterization
of rational $\GG_a$-actions that we describe now.
Let $L$ be a field of characteristic zero (not necessarily algebraically closed) 
and let $Z$ be an irreducible variety over $L$
with function field $L(Z)$.
An $L$-derivation $\partial \colon L(Z) \to L(Z)$ is called \emph{rationally integrable},
if the formal exponential homomorphism
\[
	\exp(t \partial) \colon L(Z) \to L(Z)[[t]] \, , 
	\quad f \mapsto \sum_{i = 0}^\infty \frac{\partial^i(f)}{i!}t^i
\]
factors trough $L(Z)(t) \cap L(Z)[[t]]$.
The map
\[
	\begin{array}{rcl}
		\left\{ \,
			\begin{array}{l} 
			 \textrm{rationally integrable}  \\
			 \textrm{$L$-derivations on $L(Z)$} 
			\end{array}
		\right\}	 & \to &
		\{ \,  \textrm{rational $\GG_{a, L}$-actions on $Z$} \, \} \\
		\partial & \mapsto & \alpha_{\partial}
	\end{array}
\]
is a bijection, where the comorphism of  
$\alpha_{\partial} \colon \GG_{a, L} \times_L Z \dasharrow Z$ is 
the $L$-field homomorphism
$\exp(t \partial) \colon L(Z) \to L(Z)(t)$, 
see~\cite[Theorem~1.5]{DuLi2016Rationally-integra}.

\begin{proof}[Proof of Proposition~\ref{Prop.Descent_Ga}]
	Without loss of generality, $\sigma$ is different from the identity.

	Assume first that $\sigma_K$ is unipotent. Let
	$\alpha$ be a rational $\GG_{a, K}$-action on $X_K$ with 
	$\rho_{\alpha}(1) = \sigma_K$ and 
	let $\partial \colon K(X) \to K(X)$ be the $K$-derivation associated to 
	$\alpha$, where $K(X) = K(X_K)$ is the function field of $X_K$.
	Since $K \subseteq K(X)$ 
	is a finitely generated field extension, 
	there exists an intermediate field $\kk(Y) \subseteq L \subseteq K$
	such that $\kk(Y) \subseteq L$ is a finite Galois extension
	and $\exp(t\partial)$ restricts to an $L$-algebra homomorphism 
	$L(X) \to L(X)(t)$, where $L(X) = L(X_L)$.
	As $L(X)$ is generated by the subfields $\kk(X)$ and $L$ and as 
	$\kk(Y) \subseteq L$ is a finite Galois extension, it follows that 
	$\kk(X) = \kk(Y)(X_{\kk(Y)}) \subseteq L(X)$ is a finite Galois extension as well 
	(see~e.g. \cite[Ch. VI, \S1, Theorem~1.12]{La2002Algebra}).
	
	Let $f \in \kk(X)$. Then 
	\[
		\exp(t \partial)(f) = \frac{p}{q} \in L(X)(t) \, ,
	\]
	for $p, q \in L(X)[t]$ where $q$ is non-zero. Then there exists $n_0 \geq 0$ such that
	for all $n \geq n_0$ we have $q(n) \neq 0$.
	As $\rho_{\alpha}(1)$ is defined over $\kk(Y)$
	it follows that for all $n \geq 1$ the $n$-fold composition $\rho_{\theta}(n)$ 
	is defined over $\kk(Y)$ as well. Hence, $\frac{p(n)}{q(n)} \in \kk(X)$ for all $n \geq n_0$. 
	Let $\Gamma$ be the Galois group of $\kk(X) \subseteq L(X)$.
	For $\gamma \in \Gamma$
	denote by $q^\gamma \in L(X)[t]$ the polynomial, where $\gamma$ is applied to each coefficient
	of $q$. By multiplying the nominator and denominator of 
	the fraction $\frac{p}{q}$ with the product 
	$\prod_{\gamma \in \Gamma \setminus \{ \id \} } q^{\gamma}$
	we may assume that $q \in \kk(X)[t]$. Thus, we get $p(n) \in \kk(X)$ for all $n \geq n_0$.
	Writing $p = \sum_{j=0}^m p_j t^j$ for $p_j \in L(X)$, $j = 0, \ldots, m$, yields
	\[
		\begin{pmatrix}
			1 & n & \cdots & n^m \\
			1 & n+1 & \cdots & (n+1)^m \\
			\vdots & \vdots &  & \vdots \\
			1 & n+m & \cdots & (n+m)^m 
		\end{pmatrix}
		\begin{pmatrix}
			p_0 \\ p_1 \\ \vdots \\ p_m
		\end{pmatrix} \in \kk(X)^{m+1} \, .
	\]
	Using that the Vandermonde matrix above is invertible over $\QQ$
	we get $p_0, \ldots, p_m \in \kk(X)$, i.e.~$p \in \kk(X)[t]$. In summary,
	we conclude that
	$\exp(t \partial)$ restricts to a $\kk(Y)$-algebra homomorphism $\kk(X) \to \kk(X)[[t]]$ 
	that factors through 
	\[
		\kk(X)(t) \cap \kk(X)[[t]] = \kk(X)(t) \cap K(X)[[t]] \, .
	\]
	Moreover, 
	$\partial(f) = \frac{\textrm{d}}{\textrm{d} t} \exp(t \partial)(f) |_{t = 0} \in \kk(X)$
	for all $f \in \kk(X)$ and thus $\partial$ restricts to a $\kk$-derivation of $\kk(X)$
	that is rationally integrable. The corresponding rational $\GG_a$-action $\beta$
	on $X$ satisfies $\rho_{\beta}(1) = \sigma$ and thus $\sigma$ is unipotent.

	Assume now, that $\sigma$ is unipotent. Then there
	exists a rational $\GG_a$-action $\gamma \colon \GG_a \times X \dashrightarrow X$ such that
	$\rho_{\gamma}(1) = \sigma$ and
	$\pi \circ \gamma = \pi \circ \pr_X$, where $\pr_X \colon \GG_a \times X \to X$
	denotes the projection to $X$. Then 
	\[
		\GG_{a, K} \times_K X_K = \GG_a \times  X_K \xdashrightarrow[]{\gamma_K} X_K
	\]
	is a rational $\GG_{a, K}$-action on $X_K$ that acts as $\sigma_K$ at time $t = 1$, where
	$\gamma_K$ denotes the pull-back of $\gamma$ via $\Spec(K) \to Y$.
\end{proof}

\begin{corollary}
	\label{Cor.Existence_unipotent}
	Let $G \subseteq \Bir_K(X_K)$ be a commutative algebraic subgroup 
	that contains unipotent elements different from the identity. If 
	$G$ is preserved by $\Gamma$, then there exists a unipotent
 	$\sigma \in \Bir(X/Y)$ different from the identity with $\sigma_K \in G$.
\end{corollary}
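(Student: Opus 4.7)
The plan is to descend via $\Gamma$-invariants: first identify a $\Gamma$-stable connected unipotent subgroup of $G$, then apply Proposition~\ref{Prop.Descent_aff_alg_group} to it, use the triviality of $\kk(Y)$-forms of vector groups to produce a non-trivial $\Gamma$-fixed $K$-point inside $G$, and finally transport it to $\Bir(X/Y)$ using the fact that the $\Gamma$-fixed points of $\Bir_K(X_K)$ are exactly the image of $\Bir(X/Y) \to \Bir_K(X_K)$. Unipotency of the resulting $\sigma \in \Bir(X/Y)$ will come from Proposition~\ref{Prop.Descent_Ga}.

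Concretely, I would let $G_u \subseteq G$ be the set of unipotent elements. Since $G$ is commutative, $G_u$ is a closed subgroup contained in $G_{\aff}$ (Section~\ref{Sec.Notation_Preliminaries}). In characteristic zero every unipotent element lies in a one-parameter subgroup isomorphic to $\GG_{a,K}$, so $G_u \subseteq G^\circ$ is connected and a vector group $G_u \cong \GG_{a,K}^n$ for some $n \geq 1$ (non-trivial by hypothesis). The subgroup $G_u$ is $\Gamma$-stable, because unipotency is characterized by lying in the image of some rational $\GG_a$-action on $X_K$, and the $\Gamma$-action $\varphi \mapsto \varphi^\tau = \tau_{X_K} \circ \varphi \circ \tau_{X_K}^{-1}$ conjugates such actions into such actions.

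Next I would apply Proposition~\ref{Prop.Descent_aff_alg_group} to the affine, $\Gamma$-stable subgroup $G_u$, producing a $\kk(Y)$-form $H \coloneqq G_u \aquot \Gamma$ of $\GG_{a,K}^n$. The key structural input is that in characteristic zero every $\kk(Y)$-form of $\GG_a^n$ is trivial: the group of $K$-algebraic group automorphisms of $\GG_{a,K}^n$ is $\GL_n(K)$, and Hilbert 90 gives $H^1(\Gamma, \GL_n(K)) = 0$. Hence $H \cong \GG_{a,\kk(Y)}^n$ and $H(\kk(Y)) \cong \kk(Y)^n$ is infinite. The $\kk(Y)$-form property provides a $\Gamma$-equivariant $K$-isomorphism $G_u \simto H_K$, under which any non-trivial $\kk(Y)$-point of $H$ lifts to a non-trivial $\Gamma$-fixed $K$-point $\varphi \in G_u$.

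To finish, I would use that the $\Gamma$-fixed points of $\Bir_K(X_K)$ are exactly the image of the pull-back $\Bir(X/Y) \to \Bir_K(X_K)$, so there exists $\sigma \in \Bir(X/Y)$ with $\sigma_K = \varphi \in G_u \subseteq G$; since $\varphi \neq \id$, also $\sigma \neq \id$. Finally, Proposition~\ref{Prop.Descent_Ga} upgrades unipotency of $\sigma_K = \varphi$ to unipotency of $\sigma$ itself. The main obstacle is the triviality statement for $\kk(Y)$-forms of $\GG_a^n$, which requires Hilbert 90 for $\GL_n$ over the (possibly infinite) Galois extension $\kk(Y) \subseteq K$; a secondary, more routine point is the $\Gamma$-equivariance of the isomorphism $G_u \simto H_K$ supplied by Proposition~\ref{Prop.Descent_aff_alg_group}, which ensures that $\kk(Y)$-rational points of $H$ correspond to $\Gamma$-fixed $K$-points of $G_u$.
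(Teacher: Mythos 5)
Your proposal is correct and follows essentially the same route as the paper: pass to the $\Gamma$-stable unipotent part of $G$, apply Proposition~\ref{Prop.Descent_aff_alg_group} to get a $\kk(Y)$-form, use that vector groups have only the trivial form to find a non-trivial $\Gamma$-fixed point, identify it with an element of $\Bir(X/Y)$ via~\eqref{Eq.Alg_subgroup_of_Bir_preserved_by_Gamma}, and conclude with Proposition~\ref{Prop.Descent_Ga}. The only cosmetic difference is that you justify the triviality of forms of $\GG_{a}^{n}$ via $H^1(\Gamma,\GL_n(K))=0$ (Hilbert 90), whereas the paper cites the corresponding statements in Serre's \emph{Cohomologie Galoisienne} directly.
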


\begin{proof}
	The unipotent elements of $G$ form an affine algebraic subgroup $H$ that is preserved
	under $\Gamma$. By Proposition~\ref{Prop.Descent_aff_alg_group} it follows
	that $H \aquot \Gamma$ is a $\kk(Y)$-form of $H$.
	Let $h \in H$ be a 
	$\kk(Y)$-rational point. Since $h$ is fixed by $\Gamma$, 
	there exists $\sigma(h) \in \Bir(X/Y)$  with 
	$\sigma(h)_K = h$, see~\eqref{Eq.Alg_subgroup_of_Bir_preserved_by_Gamma}. 
	Let $d = \dim H$.
	Note that $H \simeq (\GG_{a, K})^d$ and that $(\GG_{a, K})^d$ admits only the
	$\kk(Y)$-form $(\GG_{a, \kk(Y)})^d$, see 
	e.g.~\cite[Ch. III, \S1.1, Lemme~1 and \S1.3, Corrolaire]{Se1973Cohomologie-Galois}. Hence,
	there exists a $\kk(Y)$-rational point $\id_K \neq h \in H$.
	Now, $\sigma(h)_K = h \in H$ and $\sigma(h) \in \Bir(X/Y)$ is unipotent
	by Proposition~\ref{Prop.Descent_Ga}.
\end{proof}

\begin{corollary}
	\label{Cor.unipot_commuting}
	Assume that $\pi \colon X \to Y$ is a Rosenlicht quotient of 
	a closed, connected, commutative subgroup $H \subseteq \Bir(X)$
	and let $\varepsilon \colon \Bir(X/Y) \to \Bir_K(X_K)$ be the pull-back homomorphism.
	If $\overline{\varepsilon(H)}$ contains unipotent elements, then there exists a unipotent
	$\sigma \in \Bir(X/Y)$ different from the identity that commutes with $H$.
\end{corollary}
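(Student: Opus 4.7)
The plan is to apply Corollary~\ref{Cor.Existence_unipotent} to the algebraic subgroup $G \coloneqq \overline{\varepsilon(H)} \subseteq \Bir_K(X_K)$ in order to obtain the required unipotent $\sigma$, and then to deduce that $\sigma$ commutes with $H$ using that $G$ is commutative together with the injectivity of $\varepsilon$.

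First, I would note that $H \subseteq \Bir(X/Y)$, since the fibres of the geometric quotient $\pi$ are $H$-orbits, so $\varepsilon$ is well-defined on $H$. By Corollary~\ref{Cor.Centralizer_pull-back}, $G = \overline{\varepsilon(H)}$ is a connected, self-centralizing -- in particular commutative -- algebraic subgroup of $\Bir_K(X_K)$. Next, I would verify that $G$ is $\Gamma$-stable: every element of $\varepsilon(\Bir(X/Y))$ is a fixed point of the $\Gamma$-action on $\Bir_K(X_K)$, and since each $\tau \in \Gamma$ acts through the homeomorphism $\varphi \mapsto \varphi^{\tau}$, the closure of the $\Gamma$-invariant set $\varepsilon(H)$ is again $\Gamma$-invariant. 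The hypothesis that $G$ contains a unipotent element distinct from the identity then allows me to invoke Corollary~\ref{Cor.Existence_unipotent}, yielding a unipotent $\sigma \in \Bir(X/Y)$ with $\sigma \neq \id_X$ and $\sigma_K \in G$.

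It remains to verify that $\sigma$ commutes with $H$. Because $G$ is commutative and both $\sigma_K$ and $\varepsilon(h)$ lie in $G$ for every $h \in H$, we obtain $\varepsilon(\sigma h) = \sigma_K \, \varepsilon(h) = \varepsilon(h) \, \sigma_K = \varepsilon(h \sigma)$. I would finish by observing that $\varepsilon$ is injective: pulling back to $K(X_K) = \kk(X) \otimes_{\kk(Y)} K$ and restricting to $\kk(X) \otimes 1$ shows that any element of $\Bir(X/Y)$ acting trivially on the geometric generic fibre already acts trivially on $\kk(X)$, hence is the identity. With the heavy machinery of Sections~\ref{sec.Rosenlicht} and~\ref{sec.Descent} already in place, I do not anticipate any genuine obstacle here; the argument is essentially a clean combination of Corollary~\ref{Cor.Centralizer_pull-back} with Corollary~\ref{Cor.Existence_unipotent}.
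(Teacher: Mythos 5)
Your proposal is correct and follows essentially the same route as the paper: Corollary~\ref{Cor.Centralizer_pull-back} gives that $\overline{\varepsilon(H)}$ is a commutative algebraic subgroup, Corollary~\ref{Cor.Existence_unipotent} produces the unipotent $\sigma \in \Bir(X/Y)$ with $\sigma_K \in \overline{\varepsilon(H)}$, and injectivity of $\varepsilon$ together with commutativity yields that $\sigma$ commutes with $H$. Your explicit verification that $\overline{\varepsilon(H)}$ is preserved by $\Gamma$ is a detail the paper's proof leaves implicit, and it is checked correctly.
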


\begin{proof}
	By Corollary~\ref{Cor.Centralizer_pull-back}, $\overline{\varepsilon(H)}$ is a commutative
	algebraic subgroup of $\Bir_K(X_K)$. 
	Now, by Corollary~\ref{Cor.Existence_unipotent} there exists a unipotent
	$\sigma \in \Bir(X/Y)$ with $\varepsilon(\sigma) \in \overline{\varepsilon(H)}$.
	As $\varepsilon$ is injective, the statement follows.
\end{proof}

\section{\texorpdfstring{Connected solvable subgroups of $\Bir(X)$}{Connected solvable subgroups of Bir(X)}}
\label{sec.Borel}

For the whole section, $\kk$ is an uncountable, 
algebraically closed field of characteristic zero.
The goal of this section is the proof of our main result about Borel subgroups of $\Bir(X)$,
Theorem~\ref{thm:mainborel}, and to deduce several consequences of it, namely 
Corollaries~\ref{Cor.Solvable_contained_in_Borel},
\ref{Cor.Standard_Borel},
\ref{Cor.popov},
\ref{Cor.FurterPoloni}.

\medskip

Recall that for a group $G$, its \emph{$i$-th derived subgroup} $\D^i(G)$ is defined inductively
as follows: $\D^0(G) = G$ and for $i \geq 0$, $\D^{i+1}(G)$ is the commutator subgroup 
$[\D^i(G), \D^i(G)]$ of $\D^i(G)$. We denote by $\len(G) \in \NN_0 \cup \{ \infty \}$ 
its derived length,~i.e.~
the smallest $i$ such that $\D^i(G)$ is trivial. A group is called \emph{solvable},
if $\len(G)$ is finite.
Note that the non-trivial commutative groups 
are the solvable groups of derived length one.

For any subgroup $G$ of $\Bir(X)$ we have $\D^{i}(\overline{G}) \subseteq \overline{\D^{i}(G)}$ for all $i \geq 0$.
If, furthermore, $G$ is connected, then 
$\D^{i}(G)$ is connected as well for all $i \geq 0$.
In particular, if $G$ is a connected solvable subgroup of
$\Bir(X)$ of derived length $k$, the same holds for its closure $\overline{G}$.
A closed connected solvable subgroup of $\Bir(X)$ that is maximal under
subgroups with these properties is called a \emph{Borel subgroup} of $\Bir(X)$.

\begin{example}
	\label{Exa.Standard-Borel}
	Let $n \geq 1$. Recall that $B_n \subseteq \Bir(\AA^n)$ is the 
	subgroup of all birational transformations of the form
	\[
		\begin{array}{rcl}
		\AA^n &\dasharrow& \AA^n \, , \\
		(x_1, \ldots, x_n) &\dashmapsto& 
		(c_1 x_1 + d_1, \ldots, 
		c_n(x_1, \ldots, x_{n-1}) x_n + d_n(x_1, \ldots, x_{n-1}))
		\end{array}
	\]
	where $c_i, d_i \in \kk(x_1, \ldots, x_{i-1})$ and $c_i \neq 0$ for all $i=1, \ldots, n$.
	Then, $B_n$ is a Borel subgroup of $\Bir(\AA^n)$
	by \cite[Theorem~1]{Po2017Borel-subgroups-of}. 
	Moreover, $\len(B_n) = 2n$
	by \cite[Proposition~1.4]{FuHe2023Borel-subgroups-of}.
	We call $B_n$ the \emph{standard Borel subgroup} of $\Bir(\AA^n)$.
\end{example}

In the proof of Theorem~\ref{thm:mainborel}, we need more information about the Borel subgroups
in case $n = 1$ over a not necessarily algebraically closed field. For that purpose, the 
following example is crucial:

\begin{example}
	\label{Exa.Borel_Dim_1}
	Let $Y$ be an irreducible variety  and denote by
	$\pi \colon  Y \times \AA^1\to Y$ the natural projection.
	Let $B_1(Y)$ be the subgroup of $\Bir(Y \times \AA^1/Y)$
	of all birational transformations of the form
	\[
		\begin{array}{rcl}
			Y \times \AA^1 &\dasharrow& Y \times \AA^1 \, , \\
			(y, x) &\dashmapsto& 
			(y, c(y) x + d(y))
		\end{array}
	\]
	where $c, d \in \kk(Y)$ and $c \neq 0$. For a non-square 
	$f \in \kk(Y)$, let
	$T_f(Y)$ be the subgroup of $\Bir(Y \times \AA^1/Y)$ of all birational transformations 
	of the form
	\begin{equation}
		\label{Eq.Tf}
		\begin{array}{rcl}
			\varphi_{a, b} \colon Y \times \AA^1 &\dasharrow& Y \times \AA^1 \, , \\
			(y, x) &\dashmapsto& 
			\left(y, \frac{a(y) x + b(y) f(y)}{b(y)x + a(y)} \right)
		\end{array}
	\end{equation}
	where $a, b \in \kk(Y)$ and $(a, b) \neq (0, 0)$. We will show that 
	every connected solvable subgroup of $\Bir(Y \times \AA^1/Y)$
	is conjugate to a subgroup of $B_1(Y)$ or $T_f(Y)$ for a non-square $f \in \kk(Y)$
	and $B_1(Y)$, $T_f(Y)$ are Borel subgroups of $\Bir(Y \times \AA^1/Y)$.
	Note that $\len(B_1(Y) )= 2$ and that $T_f(Y)$ is commutative.
	
	Indeed, denote by $K$ 
	an algebraic closure of $\kk(Y)$. The continuous injective
	group homomorphism
	\[
		\varepsilon \colon \Bir(Y \times \AA^1/Y) \to \Bir_K(\AA^1_K) = \PGL_2(K)
	\]
	given by pull-back has $\PGL_2(\kk(Y))$ as its image. The claim now follows from the fact
	that every connected solvable subgroup of $\PGL_2(\kk(Y))$ is contained 
	in  $\varepsilon(B_1(Y))$ or $\varepsilon(T_f(Y))$ for a non-square $f \in \kk(Y)$
	and $\varepsilon(B_1(Y))$, $\varepsilon(T_f(Y))$ are Borel subgroups of $\PGL_2(\kk(Y))$,
	see~\cite[Theorem~5.9]{FuHe2023Borel-subgroups-of}.
\end{example}

In the sequel, we consider, for an irreducible variety $Y$, 
the semi-direct product $\Bir(Y\times\AA^1/Y) \rtimes \Bir(Y)$
such that 
\[
		\Bir(Y\times\AA^1/Y) \rtimes \Bir(Y) \to \Bir(Y \times \AA^1,\pi) \, , \quad 
		(\varphi, \sigma) \mapsto \varphi \circ (\sigma \times \id_{\AA^1})
\]
is a group isomorphism, where $\pi \colon Y \times \AA^1 \to Y$ is the natural projection.
In order to prove that a subgroup is conjugate to a subgroup of the standard Borel subgroup,
the following lemma turns out to be very useful:

\begin{lemma}
	\label{Lem.Criterion_inside_Bn}
	Let $Y$ be an irreducible variety,
	$G \subseteq \Bir(Y \times \AA^1,\pi)$ a closed connected 
	solvable subgroup, and
	let $H = G \cap \Bir(Y \times \AA^1/Y)$. If
	\begin{enumerate}[left=0pt]
		\item \label{Lem.Criterion_inside_Bn1} 
		$H^\circ$ is non-trivial and contained in $B_1(Y)$ or
		\item \label{Lem.Criterion_inside_Bn2}
		$\len(H^\circ) = 2$, 
	\end{enumerate}
	then $\varphi \circ G \circ \varphi^{-1}$ is contained in $B_1(Y) \rtimes \Bir(Y)$
	for some $\varphi \in \Bir(Y \times \AA^1/Y)$.
\end{lemma}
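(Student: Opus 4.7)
The plan is to show that, after a suitable conjugation by some $\varphi\in\Bir(Y\times\AA^1/Y)$, the group $G$ lies in the normalizer $N\coloneqq N_{\Bir(Y\times\AA^1,\pi)}(B_1(Y))$, and to verify that $N=B_1(Y)\rtimes\Bir(Y)$. The latter identity is straightforward: $\Bir(Y)$ normalizes $B_1(Y)$ because conjugation by $\sigma\times\id$ replaces the coefficients $c,d\in\kk(Y)$ by $c\circ\sigma^{-1},d\circ\sigma^{-1}$, and the normalizer of $B_1(Y)$ inside $\Bir(Y\times\AA^1/Y)$ equals $B_1(Y)$ because via the embedding $\varepsilon\colon\Bir(Y\times\AA^1/Y)\hookrightarrow\PGL_2(\kk(Y))$ its image is the self-normalizing upper-triangular Borel of $\PGL_2(\kk(Y))$. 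To reduce to $H^\circ\subseteq B_1(Y)$: in case~\eqref{Lem.Criterion_inside_Bn1} this is given, and in case~\eqref{Lem.Criterion_inside_Bn2} Example~\ref{Exa.Borel_Dim_1} places $H^\circ$, up to conjugation in $\Bir(Y\times\AA^1/Y)$, inside $B_1(Y)$ or some $T_f(Y)$, the latter being excluded because $T_f(Y)$ is commutative while $\len(H^\circ)=2$. Since $H$ is the kernel of the continuous homomorphism $G\to\Bir(Y)$, it is closed and normal in $G$, so $H^\circ$ is characteristic in $H$ and $G$ normalizes $H^\circ$.

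I would then distinguish two subcases. If $H^\circ$ contains a non-trivial unipotent --- automatic in case~\eqref{Lem.Criterion_inside_Bn2} since $[H^\circ,H^\circ]$ is non-trivial and contained in $[B_1(Y),B_1(Y)]=U$, the translation subgroup --- then a non-trivial unipotent of $\PGL_2(\kk(Y))$ fixes a unique $\kk(Y)$-rational point on $\PP^1$ and hence lies in a unique Borel of $\PGL_2(\kk(Y))$. Lifting via $\varepsilon$ and invoking Example~\ref{Exa.Borel_Dim_1} (the $T_f(Y)$-Borels are torus-like and contain no non-trivial unipotents), $B_1(Y)$ is the unique maximal closed connected solvable subgroup of $\Bir(Y\times\AA^1/Y)$ containing $H^\circ$. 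Since $G$ normalizes $H^\circ$ and $\Bir(Y\times\AA^1/Y)$ is normal in $\Bir(Y\times\AA^1,\pi)$, $G$ must normalize this unique Borel, giving $G\subseteq N$ as required.

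If instead $H^\circ\cap U=\{\id\}$ --- only possible in case~\eqref{Lem.Criterion_inside_Bn1} --- then an elementary fixed-point computation shows that $H^\circ$ is abelian and that all its non-identity elements share a common fixed point $x_0\in\kk(Y)$; conjugating by the translation $(y,x)\mapsto(y,x-x_0(y))\in U$ places $H^\circ$ inside the Levi $L=\{(y,x)\mapsto(y,c(y)x)\}\subseteq B_1(Y)$. Then $\varepsilon(H^\circ)$ lies in the diagonal torus $T\subseteq\PGL_2(\kk(Y))$ and contains a regular semi-simple element; for any $g=\varphi_g\circ(\sigma_g\times\id)\in G$, the relation $gH^\circ g^{-1}=H^\circ$, together with the fact that conjugation by $\sigma_g\times\id$ preserves $L$, forces $\varepsilon(\varphi_g)\in N_{\PGL_2(\kk(Y))}(T)=T(\kk(Y))\sqcup wT(\kk(Y))$. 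The induced continuous group homomorphism $G\to N_{\PGL_2(\kk(Y))}(T)/T(\kk(Y))\cong\ZZ/2$ has connected domain and discrete target, hence is trivial, so $\varepsilon(\varphi_g)\in T(\kk(Y))$, $\varphi_g\in L\subseteq B_1(Y)$, and $g\in N$.

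The main obstacle I expect is the topological bookkeeping in the toral subcase --- ensuring continuity of both the splitting $g\mapsto\varphi_g$ of the semi-direct product $\Bir(Y\times\AA^1,\pi)=\Bir(Y\times\AA^1/Y)\rtimes\Bir(Y)$ and the resulting component-group projection to $\ZZ/2$, so that connectedness of $G$ genuinely forces triviality. Both should follow from the continuity results of~\cite{ReUrSa2024The-Structure-of-A} recalled in Section~\ref{Sec.Notation_Preliminaries}.
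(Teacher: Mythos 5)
Your proposal is correct and follows essentially the same route as the paper: reduce case~\eqref{Lem.Criterion_inside_Bn2} to case~\eqref{Lem.Criterion_inside_Bn1} via Example~\ref{Exa.Borel_Dim_1} and the commutativity of $T_f(Y)$, split $g = \varphi_g \circ (\sigma_g \times \id_{\AA^1})$ using the continuous homomorphism $G \to \Bir(Y)$, use the normality of $H^\circ$ in $G$ to force $\varepsilon(\varphi_g)$ into the normalizer of (the closure of) $\varepsilon(H^\circ)$ inside $\PGL_2(\kk(Y))$, and invoke connectedness of $G$ plus continuity to land in the upper-triangular subgroup. The only organizational difference is your case split on whether $H^\circ$ meets the translation subgroup (handling the unipotent case by uniqueness of the Borel containing a non-trivial unipotent), whereas the paper treats all cases uniformly by conjugating $E = \overline{\varepsilon(H^\circ)}$ into standard position and observing that the connected component of its normalizer lies in the upper-triangular matrices; both variants rest on the same classification from \cite[Theorem~5.9]{FuHe2023Borel-subgroups-of}.
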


\begin{proof}
	\eqref{Lem.Criterion_inside_Bn1}:
	Let $\Phi \colon G \to \Bir(Y)$ be the canonical group homomorphism, which is continuous.
	The following map
	\begin{equation}
		\label{Eq.Map_to_PGL2}
		\eta \colon G \xrightarrow{g \mapsto g \circ (\Phi(g) \times \id_{\AA^1})^{-1}} 
		\Bir(Y \times \AA^1/Y) \xrightarrow{\varepsilon} \PGL_2(\kk(Y))
	\end{equation}
	is continuous, where $\varepsilon$ denotes the pull-back homomorphism. Note that 
	$E \coloneqq \overline{\varepsilon(H^\circ)}$ is a closed connected solvable subgroup of
	positive dimension
	of $\PGL_2(\kk(Y))$. Up to conjugating $G$ by an element of 
	$B_1(Y)$, we may assume that $E$ is either the subgroup of diagonal, strictly upper
	triangular or upper triangular matrices in $\PGL_2(\kk(Y))$. 
	In particular, the natural action of $\Bir(Y)$ by conjugation on $\PGL_2(\kk(Y))$
	preserves $E$. Since $G$ normalizes $H^\circ$ we get thus
	\[
		\eta(g)
		E \eta(g)^{-1} = E
		\quad \textrm{for all $g \in G$} \, .
	\]
	Let $N$ be the connected component of the normalizer of $E$
	in $\PGL_2(\kk(Y))$. Note that $N$ lies in the upper
	triangular matrices. As $G$ is connected
	and as~\eqref{Eq.Map_to_PGL2} is continuous we get that $\eta(G)$ is contained in $N$.
	This implies that $G \subseteq B_1(Y) \rtimes \Bir(Y)$.

	\eqref{Lem.Criterion_inside_Bn2}: 
	By Example~\ref{Exa.Borel_Dim_1}, $H^\circ$ is conjugate inside $\Bir(Y \times \AA^1/Y)$ 
	to a subgroup of $B_1(Y)$. Hence, the statement is a consequence 
	of~\eqref{Lem.Criterion_inside_Bn1}.
\end{proof}

In the following, we list further ingredients for the proof of Theorem~\ref{thm:mainborel}.
Here, we use in an essential way that the ground field $\kk$ is uncountable. 

\begin{lemma}
	\label{Lem.uncountable_index}
	Let $G \subseteq \Bir(X)$ be a closed connected subgroup and let $H \subsetneq G$ be a closed proper subgroup. Then $[G: H]$ is uncountable.
\end{lemma}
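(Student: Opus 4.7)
The plan is to argue by contradiction, exhausting $G$ by irreducible closed algebraic subsets and then invoking the fact that over an uncountable algebraically closed field no irreducible variety can be written as a countable union of proper closed subvarieties.

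Concretely, assume for contradiction that $[G:H]$ is countable. Since $G$ is closed and connected, by~\cite[Corollary~4.12]{ReUrSa2024The-Structure-of-A} I write $G = \bigcup_{i \geq 1} G_i$ as an increasing union of closed irreducible algebraic subsets $G_i \subseteq \Bir(X)$; each $G_i$ is by definition the image $\rho_i(V_i)$ of a morphism $\rho_i \colon V_i \to \Bir(X)$ parametrized by an irreducible $\kk$-variety $V_i$. Choosing coset representatives $\{g_\alpha\}_{\alpha \in I}$ with $I$ countable gives $G = \bigsqcup_{\alpha \in I} g_\alpha H$, and each coset $g_\alpha H$ is closed in $\Bir(X)$ because $H$ is closed and left translation by $g_\alpha$ is a homeomorphism of $\Bir(X)$ (continuity of the group operation, Section~\ref{Sec.Notation_Preliminaries}).

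Pulling back along $\rho_i$ the cover $G_i = \bigcup_\alpha (G_i \cap g_\alpha H)$ yields
\[
	V_i \;=\; \bigcup_{\alpha \in I}\, \rho_i^{-1}(g_\alpha H),
\]
a countable union of closed subsets of the irreducible $\kk$-variety $V_i$. The key input is the Baire-type fact that an irreducible variety over an uncountable algebraically closed field is never a countable union of proper closed subvarieties (which reduces, by passing to a generic curve, to the observation that the $\kk$-points of an irreducible $\kk$-curve form an uncountable set). Hence for every $i$ some index $\alpha_i \in I$ satisfies $\rho_i^{-1}(g_{\alpha_i} H) = V_i$, i.e.~$G_i = \rho_i(V_i) \subseteq g_{\alpha_i} H$.

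To conclude, I pick $i_0$ with $\id_X \in G_{i_0}$, which exists since $\id_X \in G = \bigcup_i G_i$. Then $\id_X \in g_{\alpha_{i_0}} H$ forces $g_{\alpha_{i_0}} H = H$, so $G_{i_0} \subseteq H$; because the filtration is increasing, the same argument applies to every $i \geq i_0$, yielding $G_i \subseteq H$ for all such $i$, and therefore $G = \bigcup_{i \geq i_0} G_i \subseteq H$, contradicting the properness of $H$. The only non-trivial step is the Baire-type statement for irreducible varieties, which is precisely where the hypothesis that $\kk$ is uncountable enters essentially; the remainder is routine bookkeeping using the continuity of multiplication on $\Bir(X)$ and the filtration result from~\cite{ReUrSa2024The-Structure-of-A}.
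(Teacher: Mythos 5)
Your proof is correct and rests on exactly the same key input as the paper's: pulling back the countably many closed cosets of $H$ to an irreducible variety parametrizing (part of) $G$ and invoking the fact that, over an uncountable field, an irreducible variety is not a countable union of proper closed subsets. The paper organizes this slightly more economically --- it takes a single morphism $\rho \colon V \to \Bir(X)$ from an irreducible variety whose image contains one point of $H$ and one point of $G \setminus H$, so that at least two of the disjoint closed preimages of cosets are non-empty --- but your filtration-plus-identity-tracking bookkeeping reaches the same contradiction by the same mechanism.
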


\begin{proof}
	Assume that $[G: H]$ is countable. Take $h \in H$ and $g \in G \setminus H$.
	As $G$ is connected and closed in $\Bir(X)$, there exists
	a morphism $\rho \colon V \to \Bir(X)$
	from an irreducible variety $V$
	such that its image contains $h$ and $g$.
	The preimage under $\rho$ of all $H$-cosets in $G$ gives us a decomposition
	of $V$ into countably many closed subsets, where at least two of them are non-empty.
	Since $V$ is irreducible and $\kk$ is uncountable, we obtain a contradiction 
	(e.g. by \cite[Lemma~1.3.1]{FuKr2018On-the-geometry-of}).
\end{proof}

\begin{lemma}
	\label{Lem.Reduction_to_countable_index_subgrp}
	Let $G \subseteq \Bir(X)$ be a closed connected subgroup and 
	$H \subseteq G$ a countable
	index subgroup. If $G$ is solvable, then $\len(G) = \len(H)$.
\end{lemma}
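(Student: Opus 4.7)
The plan is to prove the two inequalities $\len(H) \leq \len(G)$ and $\len(G) \leq \len(H)$ separately. The first is immediate: since $H \subseteq G$, we have $\D^{i}(H) \subseteq \D^{i}(G)$ for every $i \geq 0$, so if $\D^{k}(G)$ is trivial then so is $\D^{k}(H)$, giving $\len(H) \leq \len(G)$. The whole content of the lemma lies in the reverse inequality, and this is where the uncountability of $\kk$ enters, via Lemma~\ref{Lem.uncountable_index}.

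First I would reduce to showing that $\overline{H} = G$. Indeed, $\overline{H}$ is a closed subgroup of the closed connected group $G$, and $[G : \overline{H}] \leq [G : H]$ is countable. If $\overline{H}$ were a \emph{proper} closed subgroup of $G$, then Lemma~\ref{Lem.uncountable_index} would force $[G : \overline{H}]$ to be uncountable, a contradiction. Hence $\overline{H} = G$.

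Once density of $H$ in $G$ is established, I would invoke the inclusion
\[
	\D^{i}(\overline{H}) \subseteq \overline{\D^{i}(H)}
\]
recalled at the beginning of Section~\ref{sec.Borel}. Setting $k = \len(H)$, we have $\D^{k}(H) = \{\id_X\}$, so $\overline{\D^{k}(H)} = \{\id_X\}$ is already trivial, and therefore $\D^{k}(G) = \D^{k}(\overline{H}) \subseteq \{\id_X\}$. This yields $\len(G) \leq k = \len(H)$, and combined with the first inequality gives the desired equality.

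I do not expect a serious obstacle: the argument is a two-line application of Lemma~\ref{Lem.uncountable_index} together with the standard continuity property of derived subgroups. The only subtlety is to remember that the uncountability of $\kk$ is what makes the passage from ``countable index'' to ``density'' possible; without it, a connected subgroup $G$ can well have proper closed subgroups of countable index, and then the conclusion would fail.
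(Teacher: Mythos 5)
Your proof is correct and follows exactly the paper's (much terser) argument: Lemma~\ref{Lem.uncountable_index} forces $\overline{H}=G$, and then $\D^{k}(G)=\D^{k}(\overline{H})\subseteq\overline{\D^{k}(H)}$ gives the nontrivial inequality. You have merely spelled out the details that the paper compresses into one line.
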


\begin{proof}[{Proof of Lemma~\ref{Lem.Reduction_to_countable_index_subgrp}}]
	The closure $\overline{H}$ of $H$ in $G$ has to be equal to $G$ by Lemma~\ref{Lem.uncountable_index} and thus $\len(G) = \len(H)$.
\end{proof}

\begin{lemma}
	\label{Lem.Reduction_to_countable_quotient}
	Let $G \subseteq \Bir(X)$ be a closed connected subgroup and 
	$N \subseteq G$ a closed normal subgroup. 
	If $G/N^\circ$ is solvable, then $\len(G/N) = \len(G/N^{\circ})$.
\end{lemma}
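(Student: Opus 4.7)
The plan is to prove the equality by establishing both inequalities separately. The inequality $\len(G/N) \leq \len(G/N^\circ)$ is immediate from pure group theory: since $N^\circ \subseteq N$, the group $G/N$ is a quotient of $G/N^\circ$, hence solvable with at most the same derived length. Setting $k \coloneqq \len(G/N)$, we obtain $\D^k(G) \subseteq N$.

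For the reverse inequality $\len(G/N^\circ) \leq k$, the plan is to upgrade this containment to $\D^k(G) \subseteq N^\circ$. I would exploit the connectedness of $G$: as recalled in Section~\ref{Sec.Notation_Preliminaries}, if a subgroup of $\Bir(X)$ is connected then so are all of its iterated derived subgroups in the induced topology. Hence $\D^k(G)$ is connected, and therefore so is its closure $\overline{\D^k(G)}$ in $\Bir(X)$. Since $N$ is closed and contains $\D^k(G)$, the closure $\overline{\D^k(G)}$ lies in $N$ and of course contains the identity.

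To finish, I would invoke the preliminaries once more to recall that $N^\circ$ is open and closed in $N$ of countable index. Being clopen, $N^\circ$ is precisely the connected component of the identity in $N$, so any connected subset of $N$ that meets $N^\circ$ is entirely contained in $N^\circ$. Applied to $\overline{\D^k(G)} \ni \id$ this yields $\overline{\D^k(G)} \subseteq N^\circ$, whence $\D^k(G) \subseteq N^\circ$ and $\len(G/N^\circ) \leq k$, as required.

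No serious obstacle arises in this argument; it rests on two inputs from the preliminaries — preservation of connectedness under forming derived subgroups of connected subgroups of $\Bir(X)$, and the fact that $N^\circ$ is clopen in $N$ — and the only careful point is the distinction between $\D^k(G)$ and its closure, since $\D^k(G)$ is not a priori closed in $\Bir(X)$.
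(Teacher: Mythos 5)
Your proof is correct, and it takes a genuinely different --- and in fact more elementary --- route than the paper's. Both arguments start from the trivial inequality $\len(G/N)\le\len(G/N^\circ)$. For the reverse inequality the paper works downstairs, with the quotient topologies on $G/N$ and $G/N^\circ$: writing $k=\len(G/N^\circ)$, it shows that $N^\circ\cap\overline{\D^{k-1}(G)}$ is a \emph{proper} closed subgroup of the closed connected subgroup $\overline{\D^{k-1}(G)}$, invokes Lemma~\ref{Lem.uncountable_index} to conclude that the image of $\overline{\D^{k-1}(G)}$ in $G/N^\circ$ is uncountable, and then uses that $N/N^\circ$ is countable to push the uncountability --- hence the non-triviality --- of $\D^{k-1}$ down to $G/N$. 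You instead argue upstairs in $\Bir(X)$: with $k=\len(G/N)$ you have $\D^k(G)\subseteq N$, and since $\D^k(G)$ is connected and contains the identity it must lie in the identity component $N^\circ$, giving $\D^k(G/N^\circ)=\{1\}$. This avoids the quotient topology, the uncountable-index lemma, and even the countability of $N/N^\circ$; the only inputs are the connectedness of the derived subgroups of a connected subgroup and the fact that $N^\circ$ is the connected component of the identity in the closed subgroup $N$. In fact you do not even need $N^\circ$ to be open, nor the passage to the closure $\overline{\D^k(G)}$: a connected subset of $N$ containing the identity lies in the identity component by the very definition of a connected component. One cosmetic remark: the connectedness of $\D^i(G)$ for connected $G$ is recalled at the beginning of Section~\ref{sec.Borel} rather than in Section~\ref{Sec.Notation_Preliminaries}.
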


\begin{proof}
	For the proof we endow $G/N$ and $G/N^{\circ}$ 
	with the quotient topology given by the quotient maps
	$\pi \colon G \to G/N$ and $\pi_0 \colon G \to G/ N^{\circ}$,
	respectively. We may assume $G \neq N$ and hence,
	$k \coloneqq \len(G/N^{\circ}) > 0$. Then, by the continuity of $\pi_0$ we get
	\[
		\{1\} \neq \D^{k-1}(G/N^{\circ}) \subseteq \pi_0(\overline{\D^{k-1}(G)}) 
		\subseteq  \overline{\D^{k-1}(G/N^{\circ})} \, .
	\]
	Since $\pi_0(\overline{\D^{k-1}(G)})$ is non-trivial, we get that
	$N^{\circ} \cap \overline{\D^{k-1}(G)}$ is a proper, closed subgroup of $\overline{\D^{k-1}(G)}$.
	By Lemma~\ref{Lem.uncountable_index} we get that $\pi_0(\overline{\D^{k-1}(G)})$
	and hence $\overline{\D^{k-1}(G/N^{\circ})}$ is uncountable. 
	As $N^\circ$ has countable index in $N$,
	it follows that the restriction of the continuous group homomorphism 
	$G/N^\circ \to G/N$ to  
	$\overline{\D^{k-1}(G/N^{\circ})} \to \overline{\D^{k-1}(G/N)}$ has a countable kernel. In particular, $\overline{\D^{k-1}(G/N)}$ is uncountable, hence non-trivial.
	This shows that $\D^{k-1}(G/N)$ is non-trivial and thus
	\[
		k \leq \len(G/N) \leq \len(G/N^{\circ}) = k \, . \qedhere
	\]
\end{proof}

\begin{lemma}
	\label{Lem.Autos_of_alg_grps}
	Let $H$ be a commutative connected algebraic group, let $G$ be a closed connected solvable subgroup of $\Bir(X)$, and let $\iota \colon G \to \Aut_{\textrm{alg.grp}}(H)$ be a group homomorphism.
	Then $\len(G) \leq \len( \ker(\iota)) +  \dim H_u$, where $H_u$ denotes the 
	subgroup of unipotent elements of $H$.
\end{lemma}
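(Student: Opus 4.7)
Use Lemma~\ref{Lem.Structure_Aut_alg.grp} to reduce the problem to bounding the derived length of a solvable subgroup of $\GL_n$ for $n = \dim H_u$, and then apply the Lie--Kolchin theorem to the Zariski closure of this image. The closedness and connectedness of $G$ enter only through Lemma~\ref{Lem.Reduction_to_countable_index_subgrp}, which will be used twice to replace $G$ by countable-index subgroups without changing the derived length.

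Write $n = \dim H_u$. If $n = 0$, then $\Aut_{\textrm{alg.grp}}(H)$ is countable by Lemma~\ref{Lem.Structure_Aut_alg.grp}, so $\ker(\iota)$ has countable index in $G$, and Lemma~\ref{Lem.Reduction_to_countable_index_subgrp} immediately yields $\len(G) = \len(\ker(\iota))$. Henceforth assume $n \geq 1$. By Lemma~\ref{Lem.Structure_Aut_alg.grp}, I fix a countable-index subgroup $L \subseteq \Aut_{\textrm{alg.grp}}(H)$ together with an injective group homomorphism $\varphi \colon L \hookrightarrow \GL_n$, and I set $G' \coloneqq \iota^{-1}(L)$. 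As $[G : G'] \leq [\Aut_{\textrm{alg.grp}}(H):L]$ is countable, Lemma~\ref{Lem.Reduction_to_countable_index_subgrp} gives $\len(G) = \len(G')$.

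The image $I \coloneqq \varphi(\iota(G')) \subseteq \GL_n$ is solvable (as a quotient of $G'$), and its Zariski closure $S$ in $\GL_n$ satisfies $\D^i(S) \subseteq \overline{\D^i(I)}$, so $S$ remains solvable. Its identity component $S^\circ$ is a closed, connected, solvable algebraic subgroup of $\GL_n$, hence by the Lie--Kolchin theorem is conjugate to a subgroup of the upper triangular matrices $B_n$; an elementary computation on the descending derived series of $B_n$ gives $\len(S^\circ) \leq \len(B_n) = \lceil \log_2 n \rceil + 1 \leq n$. Setting $G'' \coloneqq \{\, g \in G' : \varphi(\iota(g)) \in S^\circ \,\}$, the quotient $G'/G''$ injects into $I/(I \cap S^\circ)$, and the finiteness of $[S : S^\circ]$ forces $[G' : G'']$ to be finite; in particular $[G:G'']$ is countable, and Lemma~\ref{Lem.Reduction_to_countable_index_subgrp} yields $\len(G) = \len(G'')$.

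To conclude, note that $\ker(\iota) \subseteq G''$ (since $1 \in S^\circ$) and that $\iota(G'')$ embeds into $S^\circ$ via $\varphi$, so $\len(\iota(G'')) \leq \len(S^\circ) \leq n$. The short exact sequence $1 \to \ker(\iota) \to G'' \to \iota(G'') \to 1$ then gives
\[
	\len(G) = \len(G'') \leq \len(\ker(\iota)) + \len(\iota(G'')) \leq \len(\ker(\iota)) + n = \len(\ker(\iota)) + \dim H_u \, .
\]
The main subtlety is that $\iota$ is merely an abstract group homomorphism, so neither $\iota(G')$ nor $S^\circ \cap I$ is in any obvious sense a topological subgroup of $G$; this forces two separate applications of Lemma~\ref{Lem.Reduction_to_countable_index_subgrp}, each of which in turn relies on the closed connectedness of $G \subseteq \Bir(X)$ and the uncountability of $\kk$.
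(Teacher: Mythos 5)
Your proof is correct and follows essentially the same route as the paper: reduce via Lemma~\ref{Lem.Structure_Aut_alg.grp} to a subgroup of $\GL_n$ with $n = \dim H_u$, pass to a countable-index subgroup of $G$ whose image lands in a connected solvable (hence triangularizable) algebraic subgroup, and combine Lemma~\ref{Lem.Reduction_to_countable_index_subgrp} with the extension inequality $\len \leq \len(\ker) + \len(\operatorname{im})$. The only difference is organizational (you apply the countable-index reduction twice, the paper once with the single subgroup $F = \iota(G) \cap L \cap \overline{\iota(G)\cap L}^{\circ}$), which is immaterial.
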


\begin{proof}
	By Lemma~\ref{Lem.Structure_Aut_alg.grp} there is a countable index subgroup 
	$L \subseteq \Aut_{\textrm{alg.grp}}(H)$ that we will consider as a subgroup of 
	$\GL_n$, where $n = \dim H_u$.
	Let $F$ be the intersection of $\iota(G) \cap L$ with the connected
	solvable algebraic subgroup $\overline{\iota(G) \cap L}^\circ$ of $\GL_n$ inside $\GL_n$. 
	Then $F$ is a finite index subgroup
	of $\iota(G) \cap L$ and $F$ is contained in a Borel subgroup of $\GL_n$.
	As $F$ has countable index in $\iota(G)$
	the same holds for $\iota^{-1}(F)$ inside $G$. 
	Now we get the following estimate:
	\[
		\len(G) \xlongequal{\textrm{Lem.~\ref{Lem.Reduction_to_countable_index_subgrp}}} 
		\len(\iota^{-1}(F)) \leq \len(\ker(\iota)) + \len(F) \, .
	\]
	As $F$ is contained in a Borel subgroup of $\GL_n$, we get $\len(F) \leq n$.
\end{proof}

\begin{remark}
	If $\dim H_u \geq 2$, then the bound in Lemma~\ref{Lem.Autos_of_alg_grps} can be improved to
	$\len(G) \leq \len(\ker(\iota)) + \lceil\log_2(\dim H_u -1)\rceil + 2$, 
	see e.g.~\cite[\S18, Theorem~1]{Su1976Matrix-groups}.
\end{remark}

\begin{proof}[{Proof of Theorem~\ref{thm:mainborel}}]
	Without loss of generality we assume that $G$ is closed in $\Bir(X)$.
	We proceed by induction on $n = \dim X$. If $n = 1$, then we may assume that 
	$X$ is a smooth projective irreducible curve. As $\Aut(X)$ is an algebraic group
	acting regularly on $X$,
	the natural group isomorphism $\Aut(X) \to \Bir(X)$ is a homeomorphism.
	Now, the result follows for $n = 1$, since $\Aut(X)^\circ$ is either $\PGL_2(\kk)$, 
	an elliptic curve or trivial.
	Assume that $n > 1$. Let $k = \len(G)$. Then $G_{k-1} \coloneqq \overline{\D^{k-1}(G)}$ is a closed connected
	commutative subgroup of $\Bir(X)$. We may assume that $G$ is non-trivial and thus
	$G_{k-1}$ is non-trivial as well.
	We distinguish whether $\kk(X)^{G_{k-1}}$ is equal to $\kk$
	or not.

	Assume first that $\kk(X)^{G_{k-1}} = \kk$.
	By Corollary~\ref{Cor.Centralizer} it follows that $G_{k-1} \subseteq \Bir(X)$ is a 
	self-centralizing, connected, algebraic subgroup of dimension $n$.
	Consider the following group homomorphism
	\[
		\iota \colon G \to \Aut_{\textrm{alg.grp}}(G_{k-1}) \, , \quad 
		g \mapsto (h \mapsto g \circ h \circ g^{-1})	\, .
	\]
	The kernel of $\iota$ is equal to $G_{k-1}$ (as $G_{k-1}$ is self-centralising)
	and hence we get $\len(G) \leq 1 + n \leq 2n$ 
	by Lemma~\ref{Lem.Autos_of_alg_grps}.

	For the rest of the proof we assume $\kk(X)^{G_{k-1}} \neq \kk$. 
	Let $\pi_0 \colon X_0 \to Y$ be a Rosenlicht quotient for $G_{k-1}$, 
	see Theorem~\ref{Thm.Rosenlicht}. Since $G$ preserves the invariant field
	$\kk(X)^{G_{k-1}} = \kk(Y)$, the natural
	group homomorphism $\Bir(X_0, \pi_0) \to \Bir(Y)$ restricts to a
	continuous group homomorphism
	\[
		\varphi\colon G \to \Bir(Y) \, .
	\]
	Since $G$ is connected the image $\im(\varphi)$ is connected as well. 
	Denote by $H \subseteq G$ the kernel of $\varphi$.
	As $H$ is a closed normal subgroup of $G$, we get
$\len(\im \varphi) = \len(G/H) = \len(G/H^\circ)$ by Lemma~\ref{Lem.Reduction_to_countable_quotient}.
Since $G_{k-1}$ is non-trivial, it follows that the general fibres of
$\pi \colon X_0 \to Y$ are of positive dimension, i.e.~$\dim Y < n$.
By induction hypothesis, $\len(\im \varphi) \leq 2 \dim Y$.

Let $K$ be an algebraic closure of $\kk(Y)$. 
Consider the injective continuous group homomorphism
\[
	\varepsilon \colon \Bir(X_0/Y) \to \Bir_K(X_{0, K}) \, .
\] 
Then, $\varepsilon(H^\circ)$ is a connected solvable subgroup of 
$\Bir_K(X_{0, K})$. 
Note that we have $\dim(X_{0, K}) = n - \dim Y < n$, as $\kk(X)^{G_{k-1}} \neq \kk$.
By induction hypothesis, we get $\len \varepsilon(H^\circ) \leq 2 (n - \dim Y)$.
Since $\varepsilon$ is injective, we get thus $\len(H^\circ) \leq 2 (n - \dim Y)$.
In summary,
\begin{equation}
	\label{Eq.Estimate_length}
	\len(G) \leq \len( H^\circ ) + \len(G/H^\circ)
			\leq 2(n - \dim Y) + 2 \dim Y = 2 n \, . 
\end{equation}

It remains to assume $\len(G) = 2n$ (and still 
$\kk(X)^{G_{k-1}} \neq \kk$) and to show that $X$ is rational, and that
$G$ is conjugate to a subgroup of the standard Borel subgroup $B_n$ in $\Bir(\AA^n)$.
Let $d = n - \dim Y$.
Then $0 < d < n$.
Since $\len(G/ H^\circ) = \len(G/ H) \leq 2 \dim Y$
and $\len(H^\circ) \leq 2 d$, we get
\[
	\len(G/ H) = 2 \dim Y \quad \textrm{and} \quad \len(H^\circ) = 2 d
\]
using~\eqref{Eq.Estimate_length}.
Hence, $\overline{\varepsilon(H^\circ)}$ is a closed connected solvable subgroup of 
$\Bir_K(X_{0, K})$ of derived length  $2d$.
Corollary~\ref{Cor.Centralizer_pull-back} 
implies that $\overline{\varepsilon(G_{2n-1})}$ is a
normal, self-centralizing, commutative, connected, algebraic
subgroup of $\overline{\varepsilon(H^\circ)}$ of 
dimension $d$.
Lemma~\ref{Lem.Autos_of_alg_grps} applied to the homomorphism
$\overline{\varepsilon(H^\circ)} \to \Aut_{\textrm{alg.grp}}(\overline{\varepsilon(G_{2n-1})})$
given by conjugation yields 
\[
	2d = \len(\overline{\varepsilon(H^\circ)}) \leq 1 + d \, .
\]
Hence, $d = 1$. Since $\len(\overline{\varepsilon(H^\circ)}) = 2$ 
and $\overline{\varepsilon(G_{2n-1})}$
is self-centralizing, the group $\Aut_{\textrm{alg.grp}}(\overline{\varepsilon(G_{2n-1})})$
is uncountable by Lemma~\ref{Lem.uncountable_index}. 
Hence, $\overline{\varepsilon(G_{2n-1})}$ contains unipotent elements
(by Lemma~\ref{Lem.Structure_Aut_alg.grp}). As $d = 1$, we get
$\overline{\varepsilon(G_{2n-1})} \simeq \GG_{a, K}$. Now, Proposition~\ref{Prop.Descent_Ga} 
implies that $G_{2n-1}$ consists only of unipotent elements. 

Let $U$ be a closed one-dimensional 
algebraic subgroup of $G_{2n-1}$ that is isomorphic to $\GG_a$. 
Then $\kk(X)^U = \kk(Y)$,
since $\kk(Y)$ is algebraically closed in $\kk(X)$.
By \cite[Theorem~1]{Ma1963On-algebraic-group}
there exists a birational map $h \colon X \dashrightarrow Y \times \AA^1$ such that 
$\pi_0$ corresponds to the natural projection $Y \times \AA^1 \to Y$. 
By induction hypothesis, $Y$ is birational to $\AA^{n-1}$
and $\im(\varphi)$ is conjugate to a subgroup of 
$B_{n-1} \subseteq \Bir(\AA^{n-1})$.
Thus, we may replace $\pi_0$ by the projection
$\pr_{n-1} \colon\AA^{n} \to \AA^{n-1}$ to the first $n-1$ coordinates, 
and we may assume
\[
	G \subseteq \Bir(\AA^n, \pr_{n-1})
	\quad \textrm{and} \quad
	\varphi(G) \subseteq B_{n-1} \subseteq \Bir(\AA^{n-1}) \, .
\]
Since $\len(H^\circ) = 2$, it follows from Lemma~\ref{Lem.Criterion_inside_Bn}
that $G$ is conjugate to a subgroup of 
$B_1(\AA^{n-1}) \rtimes B_{n-1} = B_n \subseteq \Bir(\AA^n)$.
\end{proof}

As a consequence we get the following result which answers \cite[Question~3.9]{FuPo2018On-the-maximality-} affirmatively:

\begin{corollary}
	\label{Cor.Solvable_contained_in_Borel}
	Let $A \in \{\Aut(X), \Bir(X)\}$ be either the group of automorphisms of $X$
	or the group of birational transformations of $X$.
	Then any connected solvable subgroup of $A$ is contained in a Borel subgroup of $A$.
\end{corollary}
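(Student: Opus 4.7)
The plan is to apply Zorn's lemma to the collection of closed connected solvable subgroups of $A$ containing the given subgroup $G$; the uniform derived-length bound supplied by Theorem~\ref{thm:mainborel} is exactly what makes chains boundable.

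First, I would pass from $G$ to its closure $\overline{G}$ in $A$, which, as recalled in the excerpt, remains connected and solvable of the same derived length (via $\D^i(\overline{G}) \subseteq \overline{\D^i(G)}$ and the fact that closures of connected sets are connected). So we may assume $G$ is already closed. Let $\mathcal{C}$ be the poset of closed connected solvable subgroups of $A$ containing $G$, ordered by inclusion; it is non-empty since $G \in \mathcal{C}$. To bound a chain $\{H_\alpha\}_{\alpha \in I}$ in $\mathcal{C}$, set $U \coloneqq \bigcup_\alpha H_\alpha$. By total-orderedness, any finite subset of $U$ lies in a single $H_\alpha$, so $U$ is a subgroup with $\D^i(U) = \bigcup_\alpha \D^i(H_\alpha)$ for every $i \geq 0$; since each $H_\alpha$ contains $\id_X$ and is connected, their union $U$ is connected; and by Theorem~\ref{thm:mainborel}, $\len(H_\alpha) \leq 2n$ for every $\alpha$, whence $\len(U) \leq 2n$.

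Setting $H \coloneqq \overline{U}$ (inside $A$), the set $H$ is a subgroup (multiplication and inversion being continuous in $A$), closed and connected, and satisfies $\len(H) \leq \len(U) \leq 2n$ via $\D^i(\overline{U}) \subseteq \overline{\D^i(U)}$. Moreover $H \supseteq G$. Hence $H \in \mathcal{C}$ is an upper bound for the chain. Zorn's lemma then yields a maximal element $B \in \mathcal{C}$, which by definition is a Borel subgroup of $A$ containing $G$. I do not foresee any serious obstacle beyond verifying that these topological manipulations are valid for the Zariski topology on $A$ recalled in Section~\ref{Sec.Notation_Preliminaries}; note that for $A = \Aut(X)$ the argument is carried out inside $A$ throughout, so closures remain in $A$. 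Without the derived-length bound from Theorem~\ref{thm:mainborel}, by contrast, the Zorn step would fail outright, since there would be no a priori control over $\len(U)$.
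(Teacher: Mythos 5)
Your proposal is correct and follows essentially the same route as the paper: the uniform derived-length bound from Theorem~\ref{thm:mainborel} (transferred to $\Aut(X)$ via the continuous injection $\Aut(X)\to\Bir(X)$) makes Zorn's lemma applicable to the connected solvable subgroups containing $G$. The paper states this in two sentences; you have merely filled in the routine verifications (union of a chain, passage to closures) correctly.
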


\begin{proof}
	Note that the derived length of every connected solvable subgroup in $A$ is bounded
	by Theorem~\ref{thm:mainborel} and the fact that the natural injection
	$\Aut(X) \to \Bir(X)$ is continuous. For a connected and solvable subgroup 
	$G \subseteq A$ this enables us to apply Zorn's lemma to the set 
	of all connected solvable subgroups of $A$ that contain $G$. This gives
	the desired Borel subgroup in $A$.
\end{proof}

\begin{corollary}
	\label{Cor.Standard_Borel}
	Every Borel subgroup in $\Bir(\AA^n)$ of derived length $2n$ is conjugate to the standard
	Borel subgroup $B_n$. 
\end{corollary}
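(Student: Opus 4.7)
The plan is to deduce this corollary directly from Theorem~\ref{thm:mainborel} together with the fact, recorded in Example~\ref{Exa.Standard-Borel}, that the standard Borel subgroup $B_n \subseteq \Bir(\AA^n)$ is itself a closed connected solvable subgroup of derived length $2n$ (indeed a Borel subgroup). The argument has essentially no technical content beyond what is already established; the subtlety is only in correctly invoking maximality.

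First I would let $G \subseteq \Bir(\AA^n)$ be a Borel subgroup with $\len(G) = 2n$. Since $\dim \AA^n = n$, Theorem~\ref{thm:mainborel} applies and produces an element $\psi \in \Bir(\AA^n)$ such that $\psi G \psi^{-1} \subseteq B_n$. Conjugation by $\psi$ is a group isomorphism of $\Bir(\AA^n)$ that moreover is a homeomorphism for the Zariski topology (by continuity of multiplication and inversion recalled in Section~\ref{Sec.Notation_Preliminaries}), so $\psi G \psi^{-1}$ is again a closed connected solvable subgroup that is maximal among such, i.e.~a Borel subgroup.

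Now $B_n$ is itself a closed connected solvable subgroup of $\Bir(\AA^n)$, by Example~\ref{Exa.Standard-Borel}. Since $\psi G \psi^{-1} \subseteq B_n$ and $\psi G \psi^{-1}$ is maximal with respect to being closed, connected and solvable, the inclusion must be an equality: $\psi G \psi^{-1} = B_n$. This shows that $G$ is conjugate to $B_n$, as claimed.

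The only point worth double-checking is that the hypothesis $\len(G) = 2n$ is actually used; it enters exactly once, when we invoke Theorem~\ref{thm:mainborel} to conclude that $G$ is conjugate to a subgroup of $B_n$ (this is where rationality of $X = \AA^n$ and the conjugacy statement of that theorem both play a role). The derived length is not needed again afterwards, since the equality $\psi G \psi^{-1} = B_n$ follows purely from maximality. I do not anticipate a main obstacle: the corollary is a clean two-line consequence of the already-proved Theorem~\ref{thm:mainborel}.
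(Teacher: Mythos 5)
Your proposal is correct and is exactly the argument the paper intends: the paper's own proof consists of the single line ``This follows directly from Theorem~\ref{thm:mainborel}'', and your write-up simply makes explicit the two implicit steps (conjugation preserves the Borel property since it is a topological group automorphism, and maximality of the conjugate inside the closed connected solvable subgroup $B_n$ forces equality). No gap; same approach.
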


\begin{proof}
	This follows directly from Theorem~\ref{thm:mainborel}. 
\end{proof}

Naturally, one is lead to the question, whether for every $n \geq 2$ there exists 
a Borel subgroup of $\Bir(\AA^n)$ that is non-conjugate to $B_n$. 
The affirmative answer is in fact a conjecture of Popov (he assumes $n \geq 5$, 
see \cite[p.6]{Po2017Borel-subgroups-of}), proven by Furter and Hedén in case $n = 2$, 
see~\cite[Theorem~1.3]{FuHe2023Borel-subgroups-of}.

The idea to construct Borel subgroups of distinct length (i.e.~Corollary~\ref{Cor.popov}) 
is to show that each element 
$\varphi_{a, b} \neq \id_{\AA^n}$ of the commutative subgroup $T_f(\AA^{n-1})$ of $\Bir(\AA^n)$ 
from Example~\ref{Exa.Borel_Dim_1}
is non-conjugate to any element of $B_n$ for a suitable $f \in \kk(x_1, \ldots, x_{n-1})$.

\begin{lemma}
	\label{Lem.Fixed_locus_std_Borel_elements}
	Any hypersurface in $\PP^n$ that is contained in the closure of the fixed locus 
	of an element of $B_n \setminus \{ \id_X \}$ is ruled, 
	i.e.~birational to a product of a variety with $\AA^1$. 
\end{lemma}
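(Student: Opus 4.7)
The plan is to exploit the triangular structure of $B_n$. Given $\varphi \in B_n \setminus \{\id\}$, I would let $i_0 \in \{1,\ldots,n\}$ be the smallest index such that $\varphi$ does not fix $x_{i_0}$; then $\varphi$ preserves $x_1,\ldots,x_{i_0-1}$ and sends $x_{i_0}$ to $c\cdot x_{i_0}+d$ with $c,d\in\kk(x_1,\ldots,x_{i_0-1})$ and $(c,d)\neq(1,0)$. Throughout, I would work with the linear projection $\pi\colon\PP^n\dashrightarrow\PP^{i_0-1}$ onto the first $i_0-1$ coordinates, which is $\varphi$-equivariant with trivial action on the base and whose generic fibre is $\PP^{n-i_0+1}$. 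Write $K=\kk(x_1,\ldots,x_{i_0-1})$.

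The argument then splits according to whether the irreducible hypersurface $H\subset\PP^n$ dominates $\PP^{i_0-1}$ under $\pi$. If it does not, then $\dim\pi(H)\leq i_0-2$, and since the fibres of $\pi$ have dimension $n-i_0+1$ while $\dim H=n-1$, the generic fibres of $H\to\pi(H)$ must coincide with those of $\pi$. Hence $H$ is birational to $\pi(H)\times\PP^{n-i_0+1}$, and since $n-i_0+1\geq 1$ this is ruled. If on the other hand $H$ dominates $\PP^{i_0-1}$, I would pass to the generic fibre $H_\eta\subset\PP^{n-i_0+1}_K$, which is an irreducible $K$-hypersurface contained in the closure of the fixed locus of the induced $K$-birational map $\varphi_\eta$. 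The key observation is that $\varphi_\eta$ preserves the projection $p_1\colon\PP^{n-i_0+1}_K\dashrightarrow\PP^1_K$ onto the first coordinate and acts on $\PP^1_K$ as the M\"obius map $\psi\colon x\mapsto cx+d$. Since $\psi$ has only the $K$-rational fixed points $\infty$ and, when $c\neq 1$, $-d/(c-1)$, the equivariance $p_1\circ\varphi_\eta=\psi\circ p_1$ forces the closure of the fixed locus of $\varphi_\eta$ into the union of at most two $K$-hyperplanes of $\PP^{n-i_0+1}_K$. Irreducibility of $H_\eta$ then forces it to equal one of these hyperplanes, giving $H_\eta\simeq_K\PP^{n-i_0}_K$.

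From this, the function field of $H$ becomes $K(t_1,\ldots,t_{n-i_0})=\kk(x_1,\ldots,x_{i_0-1},t_1,\ldots,t_{n-i_0})$, a purely transcendental extension of $\kk$ of transcendence degree $n-1$; hence $H$ is $\kk$-rational, and for $n\geq 2$ this implies that $H$ is birational to $\AA^{n-2}\times\AA^1$ and is therefore ruled. The step I expect to require the most care is the passage to the generic fibre in the second case: one must verify that the generic fibre of the closure of the fixed locus of $\varphi$ over $\PP^{i_0-1}$ is contained in the closure of the fixed locus of $\varphi_\eta$. This should follow from flat base change together with the $\varphi$-equivariance of $\pi$, but may require setting up a suitable $\varphi$-birational model and restricting to the open locus where $\varphi$ is a local isomorphism.
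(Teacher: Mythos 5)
Your proof is correct, but it takes a noticeably different route from the paper's. The paper argues directly with the defining equation: for \emph{any} index $k$ with $(c_k,d_k)\neq(1,0)$ it clears denominators in the fixed-point condition $c_k x_k + d_k = x_k$, observes that the resulting polynomial involves only $x_1,\ldots,x_k$ and has degree at most one in $x_k$, and notes that every irreducible component of such a hypersurface (as well as every hypersurface in the vanishing locus of a denominator, and the hyperplane at infinity) is visibly a cylinder or a graph over $\AA^{k-1}$, hence ruled --- no fibrations or generic fibres appear. You instead take $i_0$ \emph{minimal} with $(c_{i_0},d_{i_0})\neq(1,0)$; this minimality is essential to your argument, since it is what makes $\varphi$ act trivially on the base $\PP^{i_0-1}$ and the projection equivariant, and then you reduce to the fixed points of the M\"obius map $x\mapsto cx+d$ on $\PP^1_K$. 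Both arguments ultimately rest on the same mechanism (the action on the first moved coordinate is affine and non-trivial, so it confines the fixed locus to at most two values of that coordinate over the base), but your version buys a slightly stronger conclusion: a component dominating $\PP^{i_0-1}$ is a hyperplane in the generic fibre and hence $\kk$-rational, not merely ruled. The price is the base-change step you flag, which does hold: passage to the generic fibre is a flat (pro-open) localization, so the generic fibre of the closure of the (constructible) fixed locus equals the closure of its generic fibre; the fixed subscheme commutes with base change; and the relevant indeterminacy loci have codimension at least two, so they cannot absorb a hypersurface component. (As with the paper's proof, the statement should be read for $n\geq 2$: for $n=1$ a fixed point is not ruled, but that case is never used.)
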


\begin{proof}
	Let $\varphi \in B_n$ be given by
	\[
	\begin{array}{rcl}
		\AA^n &\dasharrow& \AA^n \, , \\
		(x_1, \ldots, x_n) &\dashmapsto& 
		(c_1 x_1 + d_1, \ldots, c_n(x_1, \ldots, x_{n-1}) x_n + d_n(x_1, \ldots, x_{n-1}))
	\end{array}
	\]
	where $c_i, d_i \in \kk(x_1, \ldots, x_{i-1})$ and $c_i \neq 0$. Write 
	\[
		c_i = \frac{\gamma_{i, 1}}{\gamma_{i, 2}} \quad \textrm{and} \quad 
		d_i = \frac{\delta_{i, 1}}{\delta_{i, 2}} \, ,
	\] 
	where 
	$\gamma_{i, j}, \delta_{i, j} \in \kk[x_1, \ldots, x_{i-1}]$ and
	$\gamma_{i, 2} \delta_{i, 2} \neq 0$.
	Consider the open non-empty subset
	\[
		U = \Bigset{(x_1, \ldots, x_n) \in \AA^n}
		{
		\begin{array}{l}	
		\gamma_{i, 2}(x_1, \ldots, x_{i-1}) \, , \ \delta_{i, 2}(x_1, \ldots, x_{i-1}) \neq 0 \\
		\textrm{for $i = 1, \ldots, n$}
		\end{array}
		}	\subseteq \AA^n \, .
	\]
	As each irreducible hypersurface inside $\AA^n \setminus U$ is ruled, 
	it is enough to show that the irreducible hypersurfaces in $U$ that are pointwise fixed 
	by the morphism $\varphi |_U \colon U \to \AA^n$ are ruled. 
	Let $H \subseteq U$ be such a hypersurface. Choose $k$
	such that $(c_k, d_k) \neq (1, 0)$. Then $H$ is contained in the hypersurface
	\[
		\Bigset{(x_1, \ldots, x_n) \in U}
		{ 
		\begin{array}{l}
			\delta_{k, 2}(x_1, \ldots, x_{k-1}) \gamma_{k, 1}(x_1, \ldots, x_{k-1})x_k \\
			+ \, \gamma_{k, 2}(x_1, \ldots, x_{k-1}) \delta_{k, 1}(x_1, \ldots, x_{k-1}) \\ 
			= \, x_k \gamma_{k, 2}(x_1, \ldots, x_{k-1})\delta_{k, 2}(x_1, \ldots, x_{k-1})
		\end{array}
		}	\, .
	\]
	Now, as the equation is non-trivial and of degree at most one in 
	$x_k$,
	every irreducible component of the 
	hypersurface above is ruled. This implies the statement. 
\end{proof}

\begin{proof}[Proof of Corollary~\ref{Cor.popov}]
	Let $F \in \kk[x_0, \ldots, x_{n-1}]$ be a 
	homogeneous, irreducible polynomial of degree $2n$ such that 
	the hypersurface in $\PP^{n-1}$ given by $F$ is smooth. 
	In particular, $f = F(1, x_1, \ldots, x_{n-1})$ 
	is not a square in $\kk(x_1, \ldots, x_{n-1})$. 
	Let $(a, b) \neq (0, 0)$ be a pair of elements inside
	$\kk(x_1, \ldots, x_{n-1})$.
	Now, for all 
	$(x_1, \ldots, x_n) \in \AA^n$ with  
	$b(x_1, \ldots, x_{n-1}) x_n + a(x_1, \ldots, x_{n-1}) \neq 0$
	and $x_n^2 = f(x_1, \ldots, x_{n-1})$ we have 
	$\varphi_{a, b}(x_1, \ldots, x_n) = (x_1, \ldots, x_n)$.
	When we see $\varphi_{a, b}$ as a birational map of $\PP^n$ via
	the open embedding 
	$\AA^n \to \PP^n$ given by $(x_1, \ldots, x_n) \mapsto [1: x_1: \cdots: x_n]$,
	then
	\[
		H \coloneqq V_{\PP^n}(x_0^{2n-2} x_{n}^{2} - F) \subseteq \PP^n
	\]
	lies in the closure of the fixed locus of $\varphi_{a, b} \colon \dom(\varphi_{a, b}) \to \PP^n$. Note that
	\[
		\begin{array}{rcl}
			H &\dashrightarrow& X \coloneqq 
			V_{\PP(1, \ldots, 1, n)}(x_n^2 - F) \subseteq \PP(1, \ldots, 1, n) \\
			{[x_0: \cdots: x_{n}]} &\dashmapsto& 
			{[x_0: x_1: \cdots : x_{n-1}: x_0^{n-1} x_n]}
		\end{array}
	\]
	is a birational map, where $\PP(1, \ldots, 1, n)$ denotes the weighted projective
	space of dimension $n$ with weights $1, \ldots, 1, n$. Moreover, $X$ is smooth
	and the projection to $x_0, \ldots, x_{n-1}$ yields a double cover
	$\pi \colon X \to \PP^{n-1}$ that is branched along the smooth hypersurface $D$ in $\PP^{n-1}$
	given by $F$.
	By the ramification formula we have for the canonical divisors $K_X$ and
	$K_{\PP^{n-1}}$
	\[
		K_X = \pi^{\ast} K_{\PP^{n-1}} + R \, ,
	\]
	where $R$ is the ramification divisor of $\pi$, see 
	e.g.~\cite[\S5.6, Theorem~5.5]{Ii1982Algebraic-geometry}. 
	As $\pi$ is a double cover
	which is tamely ramified over $D$ ($\car{\kk} = 0$),
	we get $2 R = \pi^\ast D$ and hence 
	\[
		2 K_X = 2 (\pi^{\ast} K_{\PP^{n-1}} + R)	= \pi^\ast(2 K_{\PP^{n-1}} + D) = 0 \, ,
	\]
	where the last equality follows from the fact that
	$D$ is linearly equivalent to $2n$ times a hyperplane in $\PP^{n-1}$ and 
	$K_{\PP^{n-1}}$ is $-n$ times a hyperplane in $\PP^{n-1}$. 
	Hence, the Kodaira dimension of $X$ is zero
	and thus $X$ and $H$ are not ruled.

	On the other hand, when we see an element $\varphi \in B_n$ as a birational map of 
	$\PP^n$, then any irreducible hypersurface in the closure of the fixed  locus of 
	$\varphi \colon \dom(\varphi) \to \PP^n$ is ruled 
	(see Lemma~\ref{Lem.Fixed_locus_std_Borel_elements}).

	Now, assume that there exists a birational map $\eta \colon \PP^n \dasharrow \PP^n$
	such that $\varphi_{a, b} = \eta^{-1} \circ \varphi \circ \eta$ for some $\varphi \in B_n$.
	By the weak factorization theorem (see \cite{AbKaMaWl2002Torification-and-f}) 
	we may decompose $\eta$
	into blow-ups of smooth varieties in smooth centers and its inverses. This implies that
	any hypersurface $V$ contracted by $\eta$ is ruled. 
	In particular, $\eta$ cannot contract $H$ and thus the closure
	of $\eta(\dom(\eta) \cap H)$ lies in the closure of the fixed locus of 
	$\varphi \colon \dom(\varphi) \to \PP^n$, contradiction.

	Let $B$ be a Borel subgroup of $\Bir(\AA^n)$ that contains 
	$T_f(\AA^{n-1})$ (see Corollary~\ref{Cor.Solvable_contained_in_Borel}),
	then $B$ has derived length $< 2n$, since otherwise $B$ and $B_n$
	would be conjugate in $\Bir(\AA^n)$ by Corollary~\ref{Cor.Standard_Borel}.
\end{proof}

Moreover, we can show that $\Aut(\AA^n)$ contains non-conjugate Borel subgroups
using a similar argument to the proof in Corollary~\ref{Cor.popov} provided that 
$n \geq 3$.
In $\Aut(\AA^n)$, the intersection $\Aut(\AA^n) \cap B_n$ is a Borel subgroup,
see~\cite[Corollary~1.2]{FuPo2018On-the-maximality-}. The question, whether 
there exist non-conjugate Borel subgroups in $\Aut(\AA^n)$ for $n \geq 3$ was posed 
in~\cite[Problem~3]{FuPo2018On-the-maximality-}.
In $\Aut(\AA^2)$ all Borel subgroups are conjugate, 
see~\cite{BeEsEs2016Dixmier-groups-and} and also~\cite{FuPo2018On-the-maximality-}.

\begin{proof}[Proof of Corollary~\ref{Cor.FurterPoloni}]
	For $n = 3$ there exists a $\GG_a$-action on $\AA^3$ that is not 
	conjugate to any $\GG_a$-action on $\AA^3$ with image in $\Aut(\AA^n) \cap B_n$, 
	see~\cite[p.2]{Ba1984A-nontriangular-ac}. 
	This example can easily be adapted to get such a $\GG_a$-action on $\AA^n$ for $n \geq 3$.
	For self-completeness we insert the argument here.

	Let $f = x_1 x_3 + x_2^2 + x_4^2 + x_5^2 + \ldots + x_n^2 \in \kk[x_1, \ldots, x_n]$
	and consider the $\GG_a$-action
	\[
		\begin{array}{rcl}
		\GG_a \times \AA^n &\to& \AA^n	\\
		(t, x_1, \ldots, x_n) &\mapsto& (x_1 + 2t \Delta x_2-(t\Delta)^2 x_3, 
		x_2 - t \Delta x_3, x_3, \ldots, x_n) \, ,
		\end{array}
	\]
	where $\Delta = f(x_1, \ldots, x_n)$. 
	
	Let $G \subseteq \Aut(\AA^n)$ be the image
	under the corresponding group homomorphism $\GG_a \to \Aut(\AA^n)$. Then the irreducible 
	hypersurface $H$ in $\AA^n$ given by $f$ is fixed by every
	element of $G$. Note that $H$ is the affine cone over an irreducible quadric in $\PP^{n-1}$
	and hence $H$ has exactly one singular point.

	On the other hand, every hypersurface that is fixed
	by a non-trivial element of $\Aut(\AA^n) \cap B_n$ is isomorphic to 
	$\AA^1 \times Y$ for some $(n-2)$-dimensional affine variety $Y$
	(this follows from a similar argument to the proof of 
	Lemma~\ref{Lem.Fixed_locus_std_Borel_elements}).

	Hence, no non-trivial element of $G$ is conjugate to an element in $\Aut(\AA^n) \cap B_n$.
	Using Corollary~\ref{Cor.Solvable_contained_in_Borel} we get the statement.
\end{proof}

\section{Characterization of rationality and ruledness}\label{sec.Char}

In this section $\kk$ is uncountable, algebraically closed and of characteristic zero. 
The goal is to prove the following characterization of rationality and ruledness
via its group of birational transformations, i.e.~Theorem~\ref{thm:charmain} and Theorem~\ref{thm:charmain2}.

\subsection{Existence of unipotent elements}
The proofs of Theorem~\ref{thm:charmain} and Theorem~\ref{thm:charmain2} use heavily 
the existence of unipotent elements that commute with certain subgroups of 
the group of birational transformations. This constitutes the main result of this subsection:

\begin{theorem}
	\label{Thm.Existence_unipot}
	Let $G, H \subseteq \Bir(X)$ be closed connected commutative non-trivial
	subgroups. Assume that
	$G$ normalizes $H$ and that $H$ is covered by countably many $G$-conjugacy class
	closures. 
	Then $\Bir(X)$ contains unipotent elements that are different from the identity and
	commute with $H$.
\end{theorem}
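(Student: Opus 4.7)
The strategy is to apply the Rosenlicht quotient theorem to $H$, pass to the geometric generic fibre, and extract a non-trivial unipotent in the resulting self-centralising algebraic subgroup by exploiting the conjugation action of $G$. Concretely, Theorem~\ref{Thm.Rosenlicht} produces a Rosenlicht quotient $\pi_0 \colon X_0 \to Y$ for $H$; set $K$ to be an algebraic closure of $\kk(Y)$, let $X_K$ be the geometric generic fibre, let $\varepsilon \colon \Bir(X_0/Y) \to \Bir_K(X_K)$ be the pull-back, and set $H' = \overline{\varepsilon(H)}$. By Corollary~\ref{Cor.Centralizer_pull-back}, $H'$ is a self-centralising, commutative, connected $K$-algebraic subgroup of $\Bir_K(X_K)$ of positive dimension (nonzero because $H$ is nontrivial and $\varepsilon$ is injective on $\Bir(X/Y)$). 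The theorem will follow once we produce a nontrivial unipotent in $H'$: if $\dim Y = 0$ then $K=\kk$, $H' = H$, and such a unipotent already lies in $H$ and commutes with $H$ by commutativity; if $\dim Y > 0$, Corollary~\ref{Cor.unipot_commuting} descends such a unipotent to a non-trivial unipotent $\sigma \in \Bir(X/Y) \subseteq \Bir(X)$ commuting with $H$.

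By Lemma~\ref{Lem.Structure_Aut_alg.grp}, it suffices to show that the group $\Aut_{\textrm{alg.grp}}(H')$ of $K$-algebraic group automorphisms of $H'$ is uncountable. The conjugation action of the subgroup $G_0 \coloneqq G \cap \Bir(X/Y)$ on $H$, pushed through $\varepsilon$, yields a group homomorphism
\[
\iota_0 \colon G_0 \longrightarrow \Aut_{\textrm{alg.grp}}(H'), \qquad g \longmapsto \bigl( h' \mapsto \varepsilon(g)\,h'\,\varepsilon(g)^{-1} \bigr),
\]
well defined because $\varepsilon(g)$ normalises $H'$ by continuity of conjugation and the density of $\varepsilon(H)$ in $H'$. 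Using that $H'$ is self-centralising and that $\varepsilon$ is injective on $\Bir(X/Y)$, one finds $\ker \iota_0 = \Cent_{G_0}(H)$. If $\iota_0(G_0)$ is uncountable we are done. Otherwise $\Cent_{G_0}(H)$ has countable index in $G_0$, and applying Lemma~\ref{Lem.uncountable_index} to the closed connected subgroup $G_0^\circ$ of $\Bir(X)$ forces $G_0^\circ \subseteq \Cent_{\Bir(X)}(H)$, i.e.~$\varepsilon(G_0^\circ) \subseteq H'$.

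The remaining case, $G_0^\circ \subseteq \Cent_{\Bir(X)}(H)$, is the main obstacle. Here the hypothesis on $G$-orbit closures, combined with the uncountability of $H$ (Lemma~\ref{Lem.uncountable_index}), rules out $G$ centralising $H$ entirely: else all $G$-orbits on the commutative $H$ would be singletons and $H$ would be a countable union of points. Therefore $G \neq G_0^\circ$, and either $G_0/G_0^\circ$ is non-trivial or $G$ acts non-trivially on $Y$. The difficulty is that elements $g \in G \setminus G_0^\circ$ induce on $H'$ only Galois-semilinear automorphisms (twisted by the automorphism of $K$ extending the action of $g$ on $\kk(Y)$), not the $K$-algebraic ones to which Lemma~\ref{Lem.Structure_Aut_alg.grp} directly applies. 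To overcome this, one exploits the $\Gamma$-stability of $H'$ (with $\Gamma = \Aut_{\kk(Y)}(K)$) together with Proposition~\ref{Prop.Descent_aff_alg_group} to descend the affine part $H'_{\aff}$ to a $\kk(Y)$-form, and combines Corollary~\ref{Cor.Existence_unipotent} with an inductive step (for instance induction on $\dim X$ applied to the quotient $Y$ and the commutative connected subgroup $\overline{G_Y} \subseteq \Bir(Y)$ inherited from $G$) to force $\Aut_{\textrm{alg.grp}}(H')$ to be uncountable, thereby producing the desired unipotent in $H'$.
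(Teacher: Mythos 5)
Your setup --- Rosenlicht quotient for $H$, passage to the geometric generic fibre, Corollary~\ref{Cor.Centralizer_pull-back} to get the self-centralizing algebraic subgroup $H'$, and descent of unipotents via Proposition~\ref{Prop.Descent_Ga} and Corollary~\ref{Cor.unipot_commuting} --- is sound and matches the paper's framework, and the easy half of your dichotomy (the image of $G_0=G\cap\Bir(X/Y)$ in $\Aut_{\textrm{alg.grp}}(H')$ being uncountable) is handled correctly. But the ``remaining case'' is exactly where the theorem lives, and your treatment of it is a statement of intent rather than a proof: ``exploit the $\Gamma$-stability of $H'$ \ldots{} and an inductive step \ldots{} to force $\Aut_{\textrm{alg.grp}}(H')$ to be uncountable'' does not explain how the semilinear automorphisms coming from $G\setminus G_0$ are to be converted into uncountably many $K$-algebraic ones, and there is no reason to expect this is possible: $\Aut_{\textrm{alg.grp}}(H')$ is countable whenever $H'$ is a torus or an abelian variety, which is precisely the situation that must be excluded, and nothing in your argument excludes it. The suggested induction on $\dim X$ applied to $\overline{G_Y}\subseteq\Bir(Y)$ also does not match the hypotheses of the theorem (there is no normalizing group covering $\overline{G_Y}$ by countably many conjugacy-class closures), so it cannot be invoked as stated.

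The paper closes this case by a mechanism you do not use: it exploits the covering hypothesis quantitatively, not merely to deduce $\Cent_G(H)\neq G$. It first takes a second Rosenlicht quotient $Y\to Z$ for the image of $G$, so that after base change to an algebraic closure of $\kk(Z)$ the group $\overline{\varphi(G)}$ acting downstairs has trivial invariant field and is therefore a finite-dimensional algebraic group by Corollary~\ref{Cor.Centralizer}. Then, when $\ker(\varphi)/(\ker(\varphi)\cap\Cent_G(H))$ is countable (your ``remaining case''), Lemma~\ref{Lem.Orbit_dimension} --- a fibre-dimension argument --- gives the uniform bound $\dim\overline{C_G(h)}\leq\dim\overline{\varphi(G)}<\infty$; combined with the hypothesis that $H$ is covered by a chain of irreducible closed sets each contained in a conjugacy-class closure, this forces $H$ itself to be a finite-dimensional algebraic subgroup. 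At that point $G/\Cent_G(H)$ acts on $H$ by honest algebraic group automorphisms (no semilinearity issue arises), it is uncountable by Lemma~\ref{Lem.uncountable_index}, and Lemma~\ref{Lem.Structure_Aut_alg.grp} produces the nontrivial unipotent in $H$. Your proposal uses the covering hypothesis only to conclude that $G$ does not centralize $H$, which is far too weak; without Lemma~\ref{Lem.Orbit_dimension} (or a substitute establishing finite-dimensionality of $H$) the central case of the theorem remains unproved.
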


\begin{lemma}
	\label{Lem.Existence_unipot_elements}
	Let $C$ be a commutative algebraic group, $C_0 \subseteq C$
	a dense subgroup and 
	$M \subseteq \Aut_{\textrm{\textrm{alg.grp}}}(C)$ an uncountable group
	preserving $C_0$. Then $C_0$ contains unipotent elements that are different from the identity.
\end{lemma}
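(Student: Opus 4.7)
The plan is to argue by contradiction, showing that if $C_0$ contains no non-trivial unipotent elements, then $M$ would have to be countable. The key leverage comes from the quotient $C/C_u$, where $C_u \subseteq C_{\aff}$ denotes the characteristic closed subgroup of unipotent elements of $C$ (see the preliminaries). Since $C_u$ is characteristic, $M$ preserves it and descends to act on $C/C_u$; the crucial observation is that $\Aut_{\textrm{alg.grp}}(C/C_u)$ is countable, so almost all of $M$ acts trivially on $C/C_u$, and this large subgroup is then forced to fix $C_0$ pointwise.

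I would first record that the $M$-action on $C_0$ is faithful: any $m \in M$ that fixes $C_0$ pointwise is a morphism of varieties $C \to C$ fixing a dense subset, hence is the identity on $C$. Next, I would verify that $\Aut_{\textrm{alg.grp}}(C/C_u)$ is countable: the affine part $(C/C_u)_{\aff}$ equals the torus $C_{\aff}/C_u$, so the unipotent subgroup of $C/C_u$ is trivial, and Lemma~\ref{Lem.Structure_Aut_alg.grp} applied to $C/C_u$ (with $n=0$) yields countability. Let $N \subseteq M$ be the kernel of the induced homomorphism $M \to \Aut_{\textrm{alg.grp}}(C/C_u)$; then $N$ has countable index in $M$, hence is still uncountable.

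Now suppose, towards a contradiction, that $C_0 \cap C_u = \{1\}$. For any $n \in N$ and any $c \in C_0$, the element $n(c)\, c^{-1}$ lies in $C_u$ (because $n$ acts trivially on $C/C_u$) and also in $C_0$ (because $M$ preserves $C_0$); thus $n(c)\, c^{-1} \in C_0 \cap C_u = \{1\}$, so $n$ fixes $C_0$ pointwise. By faithfulness this forces $n = \id$, and therefore $N = \{1\}$, contradicting the uncountability of $N$.

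The only delicate point is the countability of $\Aut_{\textrm{alg.grp}}(C/C_u)$: it relies on the structure theorem recalled in the preliminaries that in characteristic zero the unipotent subgroup of a commutative connected algebraic group lies in its affine part, combined with the splitting $C_{\aff} \cong C_s \times C_u$ of a commutative connected affine group as a torus times a vector group. Once this is in place, the remainder of the argument is a purely formal interplay between continuity of the $M$-action, faithfulness on the dense subgroup $C_0$, and the uncountability hypothesis on $M$.
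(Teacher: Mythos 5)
Your proposal is correct and follows essentially the same route as the paper: pass to $C/C_u$, use Lemma~\ref{Lem.Structure_Aut_alg.grp} to see that $\Aut_{\textrm{alg.grp}}(C/C_u)$ is countable, deduce that the countable-index (hence non-trivial/uncountable) kernel $N\subseteq M$ acts trivially modulo $C_u$, and extract a non-trivial element $n(c)c^{-1}\in C_u\cap C_0$. The only difference is cosmetic — you phrase it as a contradiction and explicitly record faithfulness of $M$ on the dense subgroup $C_0$, a point the paper uses implicitly.
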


\begin{proof}[Proof of Lemma~\ref{Lem.Existence_unipot_elements}]
	Let $U \subseteq C$ be the closed normal subgroup of unipotent elements. 
	Then $\Aut_{\textrm{alg.grp}}(C/U)$
	is countable by Lemma~\ref{Lem.Structure_Aut_alg.grp}. 
	Hence, the subgroup $M_0 \subseteq M$ of elements that act trivially 
	on $C/U$ has countable index in $M$.
	As $M$ is uncountable, we get $M_0 \neq \{ \id_X \}$.
	Let $m_0 \in M_0$ and $c_0 \in C_0$ with $m_0(c_0) \neq c_0$.
	Then $m_0(c_0) c_0^{-1}$ is an element in $U$ that is also contained in $C_0$.
\end{proof}

\begin{lemma}
	\label{Lem.Orbit_dimension}
	Let $G, H \subseteq \Bir(X)$ be closed connected subgroups such that $G$ normalizes $H$.
	Denote by $\pi_0 \colon X_0 \to Y$ a Rosenlicht quotient for $H$
	and let $\varphi \colon G \to \Bir(Y)$ be the restriction of
	$\Bir(X_0,\pi_0) \to \Bir(Y)$ to $G$. Assume that
	\begin{itemize}[left=0pt]
		\item $\overline{\varphi(G)}$ is an algebraic subgroup of $\Bir(Y)$;
		\item $\ker(\varphi) \cap \Cent_G(H)$ has countable index in $\ker(\varphi)$. 
	\end{itemize}
	Then $\dim \overline{C_G(h)} \leq \dim \overline{\varphi(G)}$ for all $h \in H$,
	where $C_G(h)$ denotes the $G$-conjugacy class of $h$ in $H$.
\end{lemma}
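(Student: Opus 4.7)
My plan is to compare the $G$-conjugation morphism based at $h$ with the homomorphism $\varphi$ by packaging them together. Fix $h \in H$. Since $G$ normalises $H$, I get a continuous morphism $\psi_h \colon G \to \Bir(X)$, $g \mapsto g h g^{-1}$, with image $C_G(h)$. Viewing $\Bir(X) \times \Bir(Y)$ as a closed subgroup of $\Bir(X \times Y)$, I will also work with the diagonal morphism
\[
	\Psi_h \colon G \to \Bir(X) \times \Bir(Y) \, , \quad g \mapsto (\psi_h(g), \varphi(g)) \, .
\]
First, pick an increasing filtration $V_1 \subseteq V_2 \subseteq \cdots$ of $G$ by irreducible closed algebraic subsets, available via \cite[Corollary~4.13]{ReUrSa2024The-Structure-of-A}.

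The central step is to show, for each $i$, that $\dim \overline{\varphi(V_i)} = \dim \overline{\Psi_h(V_i)}$. To do so, apply the fibre-dimension formula \cite[Theorem~1.5]{ReUrSa2024The-Structure-of-A} to both $\varphi|_{V_i}$ and $\Psi_h|_{V_i}$: their fibres at $v \in V_i$ are $V_i \cap v \ker(\varphi)$ and $V_i \cap v(\ker(\varphi) \cap \Cent_G(h))$ respectively, and for $v$ in a common open dense subset of $V_i$ the local dimensions of these fibres at $v$ equal $\dim V_i$ minus the respective image dimensions. This is where the hypothesis enters: since $\Cent_G(H) \subseteq \Cent_G(h)$, the subgroup $\ker(\varphi) \cap \Cent_G(h)$ also has countable index in $\ker(\varphi)$, so
\[
	v \ker(\varphi) = \bigsqcup_{j \in J} v g_j (\ker(\varphi) \cap \Cent_G(h))
\]
is a countable disjoint union of closed cosets, with the trivial coset ($g_{j_0} = e$) containing $v$. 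Intersecting with $V_i$, I obtain a countable closed decomposition of the finite-type scheme $V_i \cap v \ker(\varphi)$; since $\kk$ is uncountable, \cite[Lemma~1.3.1]{FuKr2018On-the-geometry-of} forces each of its (finitely many) irreducible components to lie in a single piece. The component through $v$ must lie in the trivial-coset piece $V_i \cap v(\ker(\varphi) \cap \Cent_G(h))$, which yields the equality of local dimensions at $v$, and hence the desired equality of image dimensions.

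To conclude, the dominant projection $\overline{\Psi_h(V_i)} \to \overline{\psi_h(V_i)}$ gives
\[
	\dim \overline{\psi_h(V_i)} \leq \dim \overline{\Psi_h(V_i)} = \dim \overline{\varphi(V_i)} \leq \dim \overline{\varphi(G)}
\]
for every $i$. The increasing chain $\overline{\psi_h(V_i)}$ of irreducible closed algebraic subsets of $\Bir(X)$ has uniformly bounded dimension, so it stabilises to an irreducible closed algebraic subset $W$ with $\dim W \leq \dim \overline{\varphi(G)}$. Since $C_G(h) = \bigcup_i \psi_h(V_i) \subseteq W$, I get $\overline{C_G(h)} \subseteq W$ and therefore $\dim \overline{C_G(h)} \leq \dim \overline{\varphi(G)}$. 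The main obstacle is the fibre-dimension equality in the middle step; everything else is bookkeeping once that is in place.
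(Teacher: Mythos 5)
Your argument is correct and is essentially the paper's proof in a different packaging: both hinge on the observation that, because $\ker(\varphi)\cap\Cent_G(H)$ has countable index in $\ker(\varphi)$ and $\kk$ is uncountable, the generic fibres of $\varphi|_{V_i}$ and of the conjugation map $g\mapsto ghg^{-1}$ have the same local dimension (you see this via the coset decomposition of $V_i\cap v\ker(\varphi)$ and the diagonal map $\Psi_h$, the paper via the quotient $G/(\ker(\varphi)\cap\Cent_G(H))$ whose map to $\overline{\varphi(G)}$ has countable discrete fibres), after which the fibre-dimension formula gives the bound. The only detail to add is the one the paper handles by passing to an open dense \emph{normal} subvariety $U_d\subseteq V_d$: the cited result that $\Bir(X_0,\pi_0)\to\Bir(Y)$ preserves algebraic families requires a normal parameter variety, so you should shrink $V_i$ accordingly before applying the fibre-dimension formula to $\varphi|_{V_i}$ and $\Psi_h|_{V_i}$.
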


\begin{proof}
 	Let 
	$\eta \colon G \to G / (\ker(\varphi) \cap \Cent_G(H))$ be the natural projection.
	We endow this quotient and $\ker(\varphi) / (\ker(\varphi) \cap \Cent_G(H))$
	with the quotient topology.
	As $\ker(\varphi)$ is closed in $G$, 
	the inclusion 
	\[
		\ker(\varphi) / (\ker(\varphi) \cap \Cent_G(H)) \hookrightarrow
		G / (\ker(\varphi) \cap \Cent_G(H))
	\]
	is a closed embedding.
	Using that $\ker(\varphi)^\circ \cap \Cent_G(H)$
	is a closed subgroup of countable index in $\ker(\varphi)^\circ$
	gives that $\ker(\varphi)^\circ$ is contained in $\Cent_G(H)$
	(see Lemma~\ref{Lem.uncountable_index}). As $\ker(\varphi)^\circ$ is open 
	in $\ker(\varphi)$, the same holds for $\ker(\varphi) \cap \Cent_G(H)$ in $\ker(\varphi)$.
	Hence, $\ker(\varphi) / (\ker(\varphi) \cap \Cent_G(H))$ is discrete and
	countable. In particular, $\varphi$ factors through a continuous map
	$\overline{\varphi} \colon G / (\ker(\varphi) \cap \Cent_G(H)) \to G'$
	with countable discrete fibres, where 
	$G' \coloneqq \overline{\varphi(G)} \subseteq \Bir(Y)$.
	
	Let $G_1 \subseteq G_2 \subseteq \cdots \subseteq \Bir(X)$ be a chain of closed 
	irreducible algebraic subsets such that their union is $G$. 
	For $d \geq 1$, there exists an open dense subset
	$U_d \subseteq G_d$ such that $U_d$ is a normal variety and the inclusion $U_d \to \Bir(X)$
	is a morphism 
	(see \cite[Proposition~5.2]{ReUrSa2024The-Structure-of-A}).
	Consider the following commutative diagram:
	\[
		\xymatrix@R=20pt{
			G_d \ar@{}[r] |-{\supseteq} & U_d \ar[rrr]^{\eta_d \coloneqq \eta |_{U_d}}
			\ar[rrrd]_-{\varphi_d \coloneqq \varphi |_{U_d}} &&& \eta(U_d) 
			\ar[d]^-{\overline{\varphi}_d \coloneqq \overline{\varphi} |_{\eta(U_d)}}
			\ar@{}[r] |-{\subseteq} & G / (\ker(\varphi) \cap \Cent_G(H)) \, ,
			\ar@/^1pc/[ld]^{\overline{\varphi}} \\
			& &&& G'
		}	
	\]
	As $U_d$ is a Noetherian topological space, the same holds for the image
	$\eta(U_d)$ and in particular, this holds for the fibres of $\overline{\varphi}_d$. 
	As the fibres of $\overline{\varphi}$ are discrete and countable,
	it follows that the fibres of $\overline{\varphi}_d$ are finite.

	Note that $U_d \xrightarrow{\varphi_d} G' \subseteq \Bir(Y)$ is a morphism to $\Bir(Y)$, 
	as $\Bir(X_0, \pi_0) \to \Bir(Y)$ preserves algebraic families 
	parametrized by normal varieties, see
	\cite[Proposition~7.5]{ReUrSa2024The-Structure-of-A}.
	For $d \geq 1$ large enough we may assume that $\varphi_d \colon U_d \to G'$ is dominant.
	By applying the fibre-dimension-formula to the irreducible variety $U_d$
	and the morphism $U_d \xrightarrow{\varphi_d} G' \subseteq \Bir(Y)$
	we may further assume after shrinking $U_d$ 
	that all non-empty fibres of $\varphi_d$
	are equidimensional of dimension $f_d \coloneqq \dim U_d - \dim G'$. 
	As the fibres of $\overline{\varphi}_d$
	are finite and discrete, it follows that all non-empty fibres of $\eta_d$ are equidimensional
	of dimension $f_d$ as well.

	We fix $h \in H$ and apply the fibre-dimension-formula 
	to the morphism
	$\rho_d \colon U_d \to \Bir(X)$ given by $g \mapsto ghg^{-1}$ in order to get 
	$\dim \overline{\rho_d(U_d)} + \dim_g \rho_d^{-1}(\rho_d(g)) = \dim U_d$
	for some $g \in U_d$. Hence, 
	\[
		\dim \overline{\rho_d(U_d)} = \dim U_d - \dim \rho_d^{-1}(\rho_d(g)) 
		\leq \dim U_d - f_d = \dim G' \, ,
	\]
	where we used that $\rho_d \colon U_d \to \Bir(X)$ factors through 
	$\eta_d \colon U_d \to \eta(U_d)$.
	Using that the dimension of a closed algebraic subset in $\Bir(X)$ is given by its 
	Krull dimension, the increasing sequence of closed irreducible algebraic subsets
	$\overline{\rho_d(U_1)} \subseteq \overline{\rho_d(U_2)} \subseteq \cdots$ of $\Bir(X)$
	becomes stationary. Hence, for $d \geq 1$ large enough $\overline{\rho_d(U_d)}$
	contains the conjugacy class $C_G(h)$ and thus $\dim \overline{C_G(h)} \leq \dim G'$.
\end{proof}

First we proof a version of Theorem~\ref{Thm.Existence_unipot} in case $GH$ acts without invariants
on $X$, and then we reduce the general case to that situation by pulling back to the geometric 
generic fibre of a Rosenlicht quotient for $GH$.

\begin{lemma}
	\label{Lem.Existence_unipot_trivial_inv}
	Let $G, H \subseteq \Bir(X)$ be closed, connected, commutative, non-trivial
	subgroups such that $G$ normalizes $H$, but does not centralize $H$. 
	Assume that:
	\begin{itemize}[left=0pt]
		\item there is an ascending chain $A_1 \subseteq A_2 \subseteq \cdots$ 
		of closed irreducible subsets in $H$ such that the union $\bigcup_{i \geq 1} A_i$
		is dense in $H$ and each $A_i$ is contained in the closure of a $G$-conjugacy
		class in $H$
		\item $\kk(X)^{GH} = \kk$
	\end{itemize}
	Then $H$ contains unipotent elements different from the identity.
\end{lemma}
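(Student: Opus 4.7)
The plan is to pull $H$ back to the geometric generic fibre of a Rosenlicht quotient for $H$ and split into two cases depending on whether the kernel of the induced action on the base essentially centralizes $H$. Let $\pi_0 \colon X_0 \to Y$ be a Rosenlicht quotient for $H$ (from Theorem~\ref{Thm.Rosenlicht}), let $\varphi \colon G \to \Bir(Y)$ be the induced continuous homomorphism, let $K$ be an algebraic closure of $\kk(Y)$, and let $\varepsilon \colon \Bir(X_0/Y) \to \Bir_K(X_{0,K})$ be the pull-back. Since $\kk(X)^H = \kk(Y)$ and $\kk(X)^{GH} = \kk$, the group $\varphi(G)$ acts on $Y$ with trivial invariants, so $\overline{\varphi(G)} \subseteq \Bir(Y)$ is a closed connected commutative subgroup satisfying the hypotheses of Corollary~\ref{Cor.Centralizer}; therefore $\overline{\varphi(G)}$ is a connected algebraic subgroup of $\Bir(Y)$ of dimension $\dim Y$. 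By Corollary~\ref{Cor.Centralizer_pull-back}, $\overline{\varepsilon(H)} \subseteq \Bir_K(X_{0,K})$ is a self-centralizing connected commutative algebraic subgroup of dimension $\dim X - \dim Y$. Conjugation then yields a homomorphism $\iota \colon \ker(\varphi)^\circ \to \Aut_{\textrm{alg.grp}}(\overline{\varepsilon(H)})$, whose kernel equals $\ker(\varphi)^\circ \cap \Cent_G(H)$ by the injectivity of $\varepsilon$ together with the fact that an algebraic group automorphism is determined by its values on the dense subgroup $\varepsilon(H)$.

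In the first case, where $\ker(\varphi)^\circ \not\subseteq \Cent_G(H)$, the kernel of $\iota$ is proper in $\ker(\varphi)^\circ$, so by Lemma~\ref{Lem.uncountable_index} the image $M \coloneqq \iota(\ker(\varphi)^\circ)$ is uncountable. Since $\ker(\varphi)^\circ$ normalizes $H$, the group $M$ preserves the dense subgroup $\varepsilon(H) \subseteq \overline{\varepsilon(H)}$. Applying Lemma~\ref{Lem.Existence_unipot_elements} to $C = \overline{\varepsilon(H)}$, $C_0 = \varepsilon(H)$, and $M$ produces a non-trivial unipotent element $\varepsilon(h) \in \varepsilon(H)$, and Proposition~\ref{Prop.Descent_Ga} then shows that $h \in H$ is itself a non-trivial unipotent element of $H$.

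In the remaining case, $\ker(\varphi)^\circ \subseteq \Cent_G(H)$, whence $\Cent_G(H) \cap \ker(\varphi)$ has countable index in $\ker(\varphi)$, and Lemma~\ref{Lem.Orbit_dimension} delivers the bound $\dim \overline{C_G(h)} \leq \dim Y$ for every $h \in H$. Since each $A_i$ lies in the closure of some $G$-conjugacy class, this gives $\dim A_i \leq \dim Y$ for all $i$. The key observation is then that an ascending chain of finite-dimensional closed irreducible subsets of $\Bir(X)$ with bounded dimensions must stabilize, since any strict containment $A_i \subsetneq A_{i+1}$ of irreducibles would force $\dim A_i < \dim A_{i+1}$. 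Hence $\bigcup_i A_i = A_{i_0}$ is closed and equals $H$, so $H$ is a finite-dimensional closed connected subgroup of $\Bir(X)$, that is, a commutative algebraic group. To finish, observe that conjugation gives a homomorphism $G \to \Aut_{\textrm{alg.grp}}(H)$ with kernel $\Cent_G(H) \subsetneq G$; if $H$ contained no non-trivial unipotent element, then $\Aut_{\textrm{alg.grp}}(H)$ would be countable by Lemma~\ref{Lem.Structure_Aut_alg.grp}, forcing $\Cent_G(H)$ to be a proper closed subgroup of countable index in $G$, contradicting Lemma~\ref{Lem.uncountable_index}. I expect the main obstacles to be identifying the kernel of $\iota$ correctly in the setup and the stabilization of the $A_i$-chain in the second case, which is what upgrades the dimension bound from Lemma~\ref{Lem.Orbit_dimension} into genuine algebraicity of $H$.
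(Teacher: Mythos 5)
Your proposal is correct and follows essentially the same route as the paper: Rosenlicht quotient for $H$, pull-back to the geometric generic fibre, a case split on whether $\ker(\varphi)$ essentially centralizes $H$ (your dichotomy $\ker(\varphi)^\circ \subseteq \Cent_G(H)$ or not is equivalent, via Lemma~\ref{Lem.uncountable_index}, to the paper's countability dichotomy for $\ker(\varphi)/(\ker(\varphi)\cap\Cent_G(H))$), with Lemma~\ref{Lem.Existence_unipot_elements} and Proposition~\ref{Prop.Descent_Ga} in the first case and Lemma~\ref{Lem.Orbit_dimension} plus stabilization of the chain $(A_i)$ and Lemma~\ref{Lem.Structure_Aut_alg.grp} in the second. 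The details, including the identification of $\ker(\iota)$ and the final uncountability contradiction, match the paper's argument.
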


\begin{proof}[Proof of Lemma~\ref{Lem.Existence_unipot_trivial_inv}]
	After shrinking $X$ we may assume that there exists
	a Rosenlicht quotient $\pi \colon X \to Y$ for $H$. 
	Let $\varphi \colon G \to \Bir(Y)$ be the restriction of $\Bir(X, \pi) \to \Bir(Y)$ 
	to $G$. We get 
	\[
		\kk(Y)^{\overline{\varphi(G)}} = \kk(Y)^{\varphi(G)} 
		= (\kk(X)^H)^{\varphi(G)} = \kk(X)^{GH} = \kk \, ,
	\] 
	where the first inequality follows from~\cite[Corollary~7.4]{ReUrSa2024The-Structure-of-A}.
	Thus, Corollary~\ref{Cor.Centralizer} yields that
	$\overline{\varphi(G)}$ is an algebraic subgroup of $\Bir(Y)$.
	Denote by $K$ an algebraic closure of $\kk(Y)$ and consider the continuous, injective 
	pull-back homomorphism
	\[
			\varepsilon \colon \Bir(X/Y) \to \Bir_K(X_K) \, ,
	\]
	where $X_K$ is the geometric generic fibre of $\pi$. 
	By Corollary~\ref{Cor.Centralizer_pull-back}, $\overline{\varepsilon(H)}$
	is an algebraic subgroup of $\Bir_K(X_K)$.

	Assume first that $\ker(\varphi) / (\ker(\varphi) \cap \Cent_G(H))$ is uncountable.
	Observe that the uncountable group 
	$\varepsilon(\ker(\varphi)) / \varepsilon(\ker(\varphi) \cap \Cent_G(H))$
	acts faithfully by conjugation on $\overline{\varepsilon(H)}$  and hence
	$\overline{\varepsilon(H)}$ contains unipotent elements
	(see Lemma~\ref{Lem.Structure_Aut_alg.grp}). Lemma~\ref{Lem.Existence_unipot_elements}
	applied to $C = \overline{\varepsilon(H)}$, $C_0 = \varepsilon(H)$
	and the image of $\varepsilon(\ker(\varphi))$ 
	in $\Aut_{\textrm{alg.grp}}(\overline{\varepsilon(H)})$ under the conjugation
	homomorphism
	yields that $\varepsilon(H)$ contains unipotent elements that are different from the 
	identity. Proposition~\ref{Prop.Descent_Ga}
	gives us the desired unipotent element in $H$.

	Form now on we assume that $\ker(\varphi) / (\ker(\varphi) \cap \Cent_G(H))$ is countable.
	By Lemma~\ref{Lem.Orbit_dimension} there exists $C \geq 0$ (independent of $h$) such that
	$\dim \overline{C_G(h)} \leq C$.
	By assumption, we get now $\dim A_i \leq C$ for all $i \geq 1$. As the $A_i$ are closed
	and irreducible in $\Bir(X)$, the sequence $(\dim A_i)_{i \geq 1}$
	becomes stationary. As $\bigcup_{i \geq 1} A_i$
	is closed and dense in $H$, we get 
	$H = A_i$ for sufficiently large $i$ and in particular $\dim H < \infty$.
	Hence, $H \subseteq \Bir(X)$ is an algebraic subgroup.
	Note that the image of $G$ in $\Aut_{\textrm{alg.grp}}(H)$ under the conjugation homomorphism
	is isomorphic to $G/\Cent_G(H)$. However, since by assumption $\Cent_G(H) \neq G$, 
	the quotient $G/\Cent_G(H)$ is uncountable, see Lemma~\ref{Lem.uncountable_index}.
	Thus, $\Aut_{\textrm{alg.grp}}(H)$ is uncountable, and by 
	Lemma~\ref{Lem.Structure_Aut_alg.grp} we get that $H$ contains unipotent elements that are different from the identity. 
\end{proof}

\begin{proof}[Proof of Theorem~\ref{Thm.Existence_unipot}]
	We may shrink $X$ such that there exists a Rosenlicht quotient 
	$\pi \colon X \to Y$ for $H$. By shrinking $Y$ 
	(and thus also $X$) we may further assume that there is a Rosenlicht quotient 
	$\eta \colon Y \to Z$ for the closure of the image of $G$ in $\Bir(Y)$. 
	Then $\kk(Z) = \kk(X)^{GH}$.

	Let $K$ be an algebraic closure of $\kk(Z)$ and consider the continuous, injective
	pull-back homomorphism
	\[
		\varepsilon \colon \Bir(X/Z) \to \Bir_K(X_K) \, .
	\]
	Then $\overline{\varepsilon(G)}, \overline{\varepsilon(H)} \subseteq 
	\Bir_K(X_K)$
	are closed, connected, commutative, non-trivial, subgroups such that 
	$\overline{\varepsilon(G)}$ normalizes $\overline{\varepsilon(H)}$. Moreover, 
	by~\eqref{Eq.invariants}
	\[
		K(X_K)^{\overline{\varepsilon(G)} \, \overline{\varepsilon(H)}} 
		= K(X_K)^{\varepsilon(G H)}
		= K \, .
	\]

	Let $A_1 \subseteq A_2 \subseteq A_3 \subseteq \ldots$ be an increasing chain of 
	irreducible closed algebraic subsets of $\Bir(X)$ such that their union is $H$.
	By assumption, there exist countably many $h_1, h_2, \ldots \in H$ such that 
	$H = \bigcup_{i \geq 1} \overline{C_G(h_i)}$, where $C_G(h)$
	is the $G$-conjugacy class of $h$ in $H$. 
	Let $\rho_d \colon V_d \to \Bir(X)$ be a morphism
	with image $A_d$. Then finitely many of the $\rho_d^{-1}(\overline{C_G(h_i)})$, $i \in \NN$
	cover $V_d$ (see e.g.~\cite[Lemma~1.3.1]{FuKr2018On-the-geometry-of}) and since $A_d$
	is irreducible, there exists $i_d \in \NN$ with $A_d \subseteq \overline{C_G(h_{i_d})}$.
	Thus, we get for all $d \in \NN$
	\[
		\varepsilon(A_d) \subseteq \varepsilon \left(\overline{C_G(h_{i_d})} \right) \subseteq
		\overline{\varepsilon(C_G(h_{i_d}))} \subseteq 
		\overline{C_{\overline{\varepsilon(G)}}(\varepsilon(h_{i_d}))}
	\]
	and hence every $\overline{\varepsilon(A_d)}$ is contained in the closure
	of a $\overline{\varepsilon(G)}$-conjugacy class in $\overline{\varepsilon(H)}$.
	As $H$ is covered by countably many $G$-conjugacy class closures, it follows that
	$G$ does not centralize $H$. Thus, $\overline{\varepsilon(G)}$ does not centralize
	$\overline{\varepsilon(H)}$.
	Now, the subgroups $\overline{\varepsilon(G)}, \overline{\varepsilon(H)}$ of 
	$\Bir(X_K)$
	and the chain $\overline{\varepsilon(A_1)} \subseteq 
	\overline{\varepsilon(A_2)} \subseteq \cdots$
	satisfy the assumptions of 
	Lemma~\ref{Lem.Existence_unipot_trivial_inv} and therefore $\overline{\varepsilon(H)}$
	contains unipotent elements different from the identity. 
	
	Let $L$ be an algebraic closure of $\kk(Y)$. Then, the composition
	\[
		\Bir(X/Y) \xrightarrow{\varepsilon |_{\Bir(X/Y)}} \Bir_K(X_K/Y_K)
		\xrightarrow{\delta} \Bir_L(X_L)
	\]
	is equal to the pull-back homomorphism associated to $\pi \colon X \to Y$, 
	where $\delta$ denotes the pull-back homomorphism associated to $\pi_K \colon X_K \to Y_K$. 
	By Proposition~\ref{Prop.Descent_Ga}, $\delta(\overline{\varepsilon(H)}) \subseteq \Bir_L(X_L)$
	contains unipotent elements different from the identity and in particular, 
	$\overline{\delta(\varepsilon(H))}$ as well.
	Hence, Corollary~\ref{Cor.unipot_commuting} yields 
	a unipotent element in $\Bir(X/Y)$ that is different from the identity and 
	commutes with $H$.
\end{proof}

\subsection{The proofs of Theorem~\ref{thm:charmain} and
	Theorem~\ref{thm:charmain2}}
	We start with a result,
	which provides certain self-centralizing closed subgroups of $\Bir(X)$
	(see Proposition~\ref{Prop.coincideswithitscentr}).

	In order to state it  we have to generalize some notion following 
	\cite[\S1]{ReSa2024Maximal-commutativ}. Let  $X$ be an irreducible
	variety endowed with a regular faithful action of an algebraic group $G$.
	For a $G$-invariant rational map $f \colon X \dashrightarrow G$ 
	denote by $\dom(f) \subseteq X$ its domain 
	(which is a $G$-stable open dense subset). Then
	\[
		r_f : \dom(f) \to \dom(f) \, , \quad x \mapsto f(x) \cdot x
	\]
	defines an automorphism of $\dom(f)$ (with inverse $x \mapsto f(x)^{-1} \cdot x$) 
	and hence $r_f$ is an element of 
	$\Bir(X)$. Let $\Rat^G(X, G)$ be the group of $G$-invariant rational maps from
	$X$ to $G$. Then, 
	\[
			\Rat^G(X, G) \to \Bir(X) \, , \quad f \mapsto r_f
	\]
	is a group homomorphism, and we denote its image by $\rRepl_X(G)$.

	If $G$ is just an algebraic subgroup of $\Bir(X)$, we still can define
	$\rRepl_X(G)$ by choosing an 
	irreducible variety $X'$ with a regular faithful $G$-action and a $G$-equivariant
	birational map 
	$\varphi \colon X \dashrightarrow X'$ 
	(Weil regularization, see e.g.~\cite{Br2022On-models-of-algeb}) 
	and then set
	\[
		\rRepl_X(G) \coloneqq 
		\varphi^{-1} \circ \rRepl_{X'}(G) \circ \varphi \subseteq \Bir(X) \, .
	\]
	Note that the definition of $\rRepl_X(G)$ does not depend on the choice of a regular
	model $\varphi \colon X \dashrightarrow X'$ and that the definition of $\rRepl_X(G)$
	slightly differs from that one in~\cite[\S1]{ReSa2024Maximal-commutativ}.

	\begin{remark}
		If $G$ is an algebraic subgroup of $\Bir(X)$, then $\rRepl_{X}(G)$
		lies in $\Bir(X_0/Y)$, where $\pi\colon X_0 \to Y$ is a Rosenlicht
		quotient of $X$ with respect to $G$.
	\end{remark}

	\begin{proposition}
		\label{Prop.coincideswithitscentr}
		Let $G$ be a connected commutative affine algebraic subgroup of $\Bir(X)$. 
		Then, $\rRepl_X(G)$ is self-centralizing in $\Bir(X)$.
	\end{proposition}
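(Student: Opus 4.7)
I would proceed by passing to the generic fibre of a Rosenlicht quotient for $G$. First, by Weil regularization I reduce to the case where $G$ acts regularly and faithfully on $X$; the subgroup $\rRepl_X(G)$ is unchanged by such a replacement, by its very definition. Let $\pi\colon X\to Y$ be a Rosenlicht quotient for $G$, so that $\kk(Y)=\kk(X)^{G}$ and, after shrinking $X$, $\pi$ is a principal $G$-bundle. Every $r_{f}$ lies in $\Bir(X/Y)$, and a direct computation using the $G$-invariance of $f$ together with the commutativity of $G$ gives $r_{f}\circ r_{h}=r_{fh}$. Hence $\rRepl_X(G)$ is commutative, and $f\mapsto r_{f}$ yields a group isomorphism $G(\kk(Y))=\Rat^{G}(X,G)\xrightarrow{\sim}\rRepl_X(G)$, with injectivity following from the faithfulness of the action. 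We assume $G$ is non-trivial, otherwise the statement is vacuous.

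Next I would show that any $\varphi\in\Bir(X)$ centralizing $\rRepl_X(G)$ lies in $\Bir(X/Y)$. Since $G\subseteq\rRepl_X(G)$, the map $\varphi$ commutes with $G$ and so preserves $G$-orbits setwise, inducing $\bar{\varphi}\in\Bir(Y)$ with $\pi\circ\varphi=\bar{\varphi}\circ\pi$. For every $G$-invariant $f\colon X\dashrightarrow G$ with associated $\bar{f}\colon Y\dashrightarrow G$, the identity $r_{f}\circ\varphi=\varphi\circ r_{f}$ unfolds to
\[
	f(\varphi(x))\cdot\varphi(x)=\varphi(f(x)\cdot x)=f(x)\cdot\varphi(x) \, ,
\]
so that $\bar{f}\circ\bar{\varphi}=\bar{f}$ for every $\bar{f}\in\Rat(Y,G)$. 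By the structure theorem for connected commutative affine algebraic groups in characteristic zero, $G\simeq\GG_m^{a}\times\GG_a^{b}$ with $a+b>0$, so $\Rat(Y,G)$ contains copies of $\kk(Y)^{\times}$ and $\kk(Y)$ that separate points of $Y$; hence $\bar{\varphi}=\id_{Y}$ and $\varphi\in\Bir(X/Y)$.

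To conclude, I pass to the generic fibre. The fibre $X_{\kk(Y)}$ is a $G_{\kk(Y)}$-torsor, and since $G\simeq\GG_m^{a}\times\GG_a^{b}$ gives $H^{1}(\kk(Y),G)=0$ (by Hilbert~90 together with the vanishing of $H^{1}$ for $\GG_a$), this torsor is trivial over $\kk(Y)$. A choice of $\kk(Y)$-rational point identifies $X_{\kk(Y)}\simeq G_{\kk(Y)}$ and converts $\rRepl_X(G)$ into the group of left translations by $G(\kk(Y))$. The pull-back $\varphi_{\kk(Y)}\in\Bir_{\kk(Y)}(G_{\kk(Y)})$ commutes with all such translations; evaluating $\varphi_{\kk(Y)}(g x)=g\cdot\varphi_{\kk(Y)}(x)$ at the neutral element shows that $\varphi_{\kk(Y)}$ is itself left translation by some $t\in G(\kk(Y))$, i.e.~$\varphi_{\kk(Y)}=(r_{f})_{\kk(Y)}$ for the $r_{f}\in\rRepl_X(G)$ corresponding to $t$. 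Injectivity of the pull-back $\Bir(X/Y)\to\Bir_{\kk(Y)}(X_{\kk(Y)})$, which factors the injective pull-back to $\Bir_{K}(X_{K})$ recorded in Section~\ref{Sec.Notation_Preliminaries}, then yields $\varphi=r_{f}$.

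I expect the main obstacle to be the last paragraph: trivializing $X_{\kk(Y)}$ as a $G_{\kk(Y)}$-torsor over $\kk(Y)$ itself, rather than only over its algebraic closure, and identifying the image of $\rRepl_X(G)$ under pull-back with the full group of $\kk(Y)$-rational translations. This rests on the structure theorem for $G$ and the vanishing of $H^{1}(\kk(Y),G)$; once these are in place, the centralizer computation on the generic fibre and the descent back to $\Bir(X)$ are formal.
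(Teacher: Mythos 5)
Your overall strategy (reduce to a regular faithful action, quotient by $G$, show the centralizer lives in $\Bir(X/Y)$ and induces the identity on $Y$, then identify it with a translation fibrewise) parallels the paper's proof. But there is a genuine gap at the very first structural step: the claim that, after Weil regularization and shrinking, the Rosenlicht quotient $\pi\colon X\to Y$ is a principal $G$-bundle. A faithful regular action of a connected commutative affine group need \emph{not} be generically free when $G$ has a unipotent part. Concretely, let $G=\GG_a^2$ act on $X=\AA^2$ by $(s,t)\cdot(x,y)=(x+s+ty,\,y)$. This is a faithful action of a connected commutative unipotent group, yet every stabilizer is the one-dimensional subgroup $\{(-ty,t)\}\subseteq\GG_a^2$, the orbits are the lines $y=\mathrm{const}$, and the quotient $\AA^2\to\AA^1$ is nowhere a principal $G$-bundle. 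The same example refutes your assertion that $f\mapsto r_f$ is injective ``by faithfulness'': the invariant map $f(x,y)=(-y,1)$ is non-trivial but $r_f=\id$. Consequently $X_{\kk(Y)}$ is not a $G_{\kk(Y)}$-torsor in general, so the $H^1$-vanishing argument and the identification of $\rRepl_X(G)$ with all left translations by $G(\kk(Y))$ do not apply as stated; moreover your cancellation step $f(\varphi(x))\cdot\varphi(x)=f(x)\cdot\varphi(x)\Rightarrow \bar f\circ\bar\varphi=\bar f$ also silently uses freeness.

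This is exactly the issue the paper's Lemma~\ref{Lem.Product_model} is designed to handle: it shows that the semisimple part of $G$ does act generically freely, chooses a complement $U$ to the generic stabilizer inside the unipotent part, and produces a subgroup $G'\subseteq G$ acting freely with the same orbits together with an explicit $G'$-equivariant product decomposition $X\simeq G'\times Y$ (using a section of the quotient from Brion's theorem, so no cohomological trivialization over $\kk(Y)$ is needed). The proposition is then proved by the computation you sketch, but carried out for $G'$ on $G'\times Y$ over $\kk$ rather than on the generic fibre. Your argument could in principle be repaired in the torsor language by working with the quotient $G_{\kk(Y)}/G_\eta$ by the generic stabilizer and checking both $H^1(\kk(Y),G_{\kk(Y)}/G_\eta)=0$ and the surjectivity of $G(\kk(Y))\to (G_{\kk(Y)}/G_\eta)(\kk(Y))$, but that analysis of $G_\eta$ is precisely the missing content, not a formality.
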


	Before we start the proof of Proposition~\ref{Prop.coincideswithitscentr}
	we establish a lemma about regular actions of connected commutative algebraic groups.

	\begin{lemma}
		\label{Lem.Product_model}
		Let $G$ be a connected commutative affine algebraic group that acts faithfully and 
		regularly on $X$. Then there exists an irreducible variety $Y$, 
		a closed connected subgroup
		$G' \subseteq G$ and a $G'$-equivariant open embedding 
		$\varepsilon \colon G' \times Y \to X$ with a $G$-stable image,
		where $G'$ acts via left multiplication only on the first factor of $G' \times Y$,
		and the following composition is a 
		geometric quotient for the $G$-action:
		\[
			\varepsilon(G' \times Y) 
			\xrightarrow[\sim]{\varepsilon^{-1}} G' \times Y \xrightarrow{(g', y) \mapsto y} Y \, .
		\]
	\end{lemma}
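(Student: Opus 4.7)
The plan is to take $G' = G$ and produce $\varepsilon$ by trivializing a $G$-torsor arising from a Rosenlicht quotient. First, by Rosenlicht's theorem applied to the regular $G$-action on $X$, I would obtain a $G$-stable open dense subset $X_0 \subseteq X$ together with a geometric quotient $\pi \colon X_0 \to Y$; after shrinking $Y$ (and pulling $X_0$ back) I may assume $Y$ is smooth and affine and that every fiber of $\pi$ has the maximal possible dimension $\dim G$.

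Next, I would show that, after a further shrinking of $X_0$, every stabilizer of the $G$-action on $X_0$ is trivial. Here commutativity enters essentially: for every $g \in G$ the fixed locus $X^g$ is $G$-stable, since $h(gx) = g(hx)$ for all $h, g \in G$, so $\mathrm{Stab}_G(x)$ is constant along $G$-orbits. On a dense $G$-stable open of $X_0$ this stabilizer is therefore a fixed closed subgroup $H \subseteq G$; then $H$ fixes a dense subset of $X$ and hence all of $X$ by closedness of each $X^h$, and faithfulness forces $H = \{e\}$. After this shrinking, $\pi$ is a geometric quotient by a free action of the smooth connected group $G$, and so a principal $G$-bundle.

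In the last step, I would trivialize this $G$-torsor. Since $\car(\kk) = 0$ and $G$ is commutative connected affine, $G \simeq \GG_m^r \times \GG_a^s$, and over the affine $Y$ the torsor is classified by $H^1(Y, \GG_m)^r \oplus H^1(Y, \GG_a)^s = \Pic(Y)^r$, the $\GG_a$-summand vanishing by affineness. Each of the $r$ line bundles in question trivializes over some nonempty open subset of $Y$, so after a further shrinking the $G$-torsor becomes Zariski-trivial. The resulting $G$-equivariant isomorphism $G \times Y \simto \pi^{-1}(Y) \subseteq X_0$ (with $G$ acting on the first factor by left multiplication) composes with the projection to $Y$ to recover $\pi$. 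Setting $G' = G$ and defining $\varepsilon$ as this isomorphism followed by the inclusion into $X$, all requirements of the lemma are met.

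The main obstacle is the stabilizer-triviality step: once it is in place, trivializing a $\GG_m^r \times \GG_a^s$-torsor by shrinking is standard. This reduction uses both faithfulness and commutativity of $G$ in an essential way and is the conceptual heart of the lemma.
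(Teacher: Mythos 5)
There is a genuine gap at what you yourself identify as the conceptual heart of the argument: the passage from ``the stabilizer is constant along each orbit'' to ``the stabilizer is a fixed subgroup $H$ on a dense $G$-stable open subset.'' Commutativity gives $G_{gx}=gG_xg^{-1}=G_x$, so stabilizers are indeed constant along orbits, but a dense $G$-stable open set contains a whole family of orbits, and the stabilizer can vary from orbit to orbit. A concrete counterexample: let $G=\GG_a^2$ act on $X=\AA^3$ by $(a,b)\cdot(x,y,z)=(x,y,z+ax+by)$. This action is faithful and commutative, yet the stabilizer of $(x,y,z)$ is the line $\{(a,b): ax+by=0\}\subseteq\GG_a^2$, which depends on $[x:y]$; there is no dense open on which the action is free. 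Consequently your conclusion with $G'=G$ is not just unproved but false for this example: a $G$-equivariant open embedding $G\times Y\to X$ with $G$ acting by left translation on the first factor would force the action to be generically free, whereas here the generic orbit is one-dimensional while $\dim G=2$. This is precisely why the lemma allows a \emph{proper} closed subgroup $G'\subseteq G$, and no repair of your argument can avoid introducing it. (Your torus-case intuition is correct --- for a faithful diagonalizable action the generic stabilizer is the common kernel of the weights occurring in $\kk[X]$, hence trivial --- but the unipotent part genuinely misbehaves.)

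For comparison, the paper's proof first shrinks $X$ so that the semisimple part $G_s$ acts freely (so all stabilizers lie in $G_u$), invokes Brion's theorem to obtain a smooth affine geometric quotient $\pi\colon X_0\to Y$ admitting a section $s$, fixes one point $x_0=s(y_0)$, and chooses $U\subseteq G_u$ complementary to the single stabilizer $G_{x_0}$, setting $G'=UG_s$. The map $\varepsilon(g',y)=g'\cdot s(y)$ is then shown to be bijective after shrinking $Y$ by a semicontinuity argument on the incidence variety $\{(u,y): u\,s(y)=s(y)\}$, using that unipotent stabilizers in characteristic zero are connected; Zariski's main theorem concludes. Your final torsor-trivialization step (vanishing of $H^1(Y,\GG_a)$ on affines and generic triviality of line bundles) would be fine once one has a genuine principal bundle for a fixed group, but the group for which one gets a free action is $G'$, not $G$, and identifying it requires the pointwise complement construction rather than a global constancy of stabilizers.
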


	\begin{proof}
		Let $G_u, G_s \subseteq G$ be the closed subgroups
		of unipotent and semisimple elements, respectively. Then $G = G_u G_s$.
		Denote by $X' \subseteq X$ the subset of the $x \in X$ with a trivial stabilizer
		$(G_s)_x$. Then $X'$ is open and dense in $X$ 
		(see e.g.~\cite[Proposition~3.6.1]{Kr2016Algebraic-transfor}) and since $G$
		is commutative, $X'$ is $G$-stable. Hence, by shrinking $X$ we may assume
		that $G_s$ acts freely on $X$. Thus, we get for all $x \in X$
		\[
			G_x = (G_x \cap G_u)(G_x \cap G_s) \subseteq G_u \, .
		\]

		By \cite[Theorem~2]{Br2021Homogeneous-variet} 
		there exists a $G$-stable open dense subset $X_0$ in $X$ such that
		the geometric quotient $\pi \colon X_0 \to Y$ of the $G$-action exists, 
		$X_0$ and $Y$ are smooth and affine,
		$\pi$ is smooth and admits a section $s \colon Y \to X_0$.
		Take $y_0 \in Y$ and let $x_0 = s(y_0) \in X_0$. 
		Let $U \subseteq G_u$ be a closed subgroup such that 
		$U \cap G_{x_0}$ is trivial and $\dim U + \dim G_{x_0} = \dim G_u$.
		Denote $G' = U G_s \subseteq G$ and consider the $G'$-equivariant morphism
		\[
			\varepsilon \colon G' \times Y \to X_0 \, , \quad 
			(g', y) \mapsto g' \cdot s(y) \, .
		\]

		\begin{claim}
			After shrinking $Y$, the morphism $\varepsilon$ is bijective.
		\end{claim}

		\begin{proof}
			Let $E \subseteq U \times Y$ be the closed subset of 
			those $(u, y) \in U \times Y$ with $u s(y) = s(y)$.
			Consider the surjective morphism $p \colon E \to Y$, $(u, y) \mapsto y$
			and let $E' \subseteq E$ be the irreducible
			component of $E$ that contains $\{1\} \times Y$. 
			Since $p^{-1}(y_0) = \{ (1, y_0) \}$, by the semi-continuity
			of the fibre dimension (see e.g.~\cite[Theorem~14.112]{GoWe0Algebraic-geometryI})
			applied to $p \colon E \to Y$, 
			there is a dense open subset $E'' \subseteq E'$ that contains $(1, y_0)$ and 
			is also open in $E$ with 
			\[
				\dim_{e''} p^{-1}(p(e'')) = 0 \quad \textrm{for all $e'' \in E''$} \, .
			\]
			Since for all $y \in Y$ the fibre of $p \colon E \to Y$ over $y$ 
			is irreducible (it is the stabilizer $U_{s(y)}$) 
			and contains $(1, y)$, we conclude that $p^{-1}(y) = \{(1, y)\}$ for 
			all $y \in p(E'') \subseteq Y$.
			Hence, $p |_{E''} \colon E'' = p^{-1}(p(E'')) \to Y$ is an open embedding 
			(see \cite[Corollary~12.88]{GoWe0Algebraic-geometryI}).
			This shows that the stabilizer $(G')_{s(y)} = U_{s(y)}$ is trivial for $y \in p(E'')$. 
			Thus, after replacing $Y$ with $p(E'')$,
			we may assume that $\varepsilon$ is injective (and still $y_0 \in Y$). 
			By the smoothness of $\pi$,
			it follows that all $G$-orbits in $X_0$ have the same dimension.
			As $\dim G x_0 = \dim G - \dim G_{x_0} = \dim G_s + \dim U = \dim G'$
			and since for all $y \in Y$ 
			the $G'$-orbits in $G s(y)$ have the same dimension,
			we conclude that $\varepsilon$ is also surjective.
		\end{proof}

		Now, the statement follows from Zariski's main theorem.
	\end{proof}

	As $G$ is commutative in Proposition~\ref{Prop.coincideswithitscentr}, 
	$\rRepl_X(G)$ is commutative. So, for establishing a 
	proof of Proposition~\ref{Prop.coincideswithitscentr}, we only have to show that every 
	element of $\Bir(X)$ that commutes with $\rRepl_X(G)$ is contained in $\rRepl_X(G)$.

	\begin{proof}[Proof of Proposition~\ref{Prop.coincideswithitscentr}]
		By replacing $X$ with a regular model for $G$, we may assume that $G$
		acts faithfully and regularly on $X$. 
		Using Lemma~\ref{Lem.Product_model} (and after possibly shrinking $X$) 
		we may assume that
		there exists a $G'$-equivariant isomorphism
		\[
			\varepsilon \colon X' \coloneqq G' \times Y \xrightarrow{\sim} X
		\]
		for a connected closed subgroup $G'$ of $G$ and 
		$\pr_Y \circ \varepsilon^{-1}$ is a geometric quotient for the $G$-action,
		where $\pr_Y$ is the projection to $Y$. In particular, 
		$\rRepl_{X'}(G') \subseteq \varepsilon^{-1} \circ \rRepl_X(G) \circ \varepsilon$.
		Let $\varphi \in \Cent_{\Bir(X)}(\rRepl_{X}(G))$.
		Then,
		$\psi \coloneqq \varepsilon^{-1} \circ \varphi \circ \varepsilon$ commutes with
		$\rRepl_{X'}(G')$. Moreover, $\lociso(\psi)$ is $G'$-stable and hence, 
		we may assume after shrinking $Y$ (and then $X$ and $X'$) 
		that $\psi$ is an automorphism of $X'$.

		\begin{claim}
			\label{Claim.Id_on_quotient}
			$\psi$ induces the identity on $Y$.
		\end{claim}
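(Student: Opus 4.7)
The plan is to use the product structure $X'=G'\times Y$ to write out $\rRepl_{X'}(G')$ explicitly and then extract constraints on $\psi$ from the commutation hypothesis. Under the identification $X'=G'\times Y$ with $G'$ acting on the first factor by left multiplication, a $G'$-invariant rational map $f\colon X'\dashrightarrow G'$ factors through $\pr_Y$, so the corresponding replica acts by
\[
r_f(g',y) = (f(y)\, g', y).
\]
In particular, the constant $f$'s recover all left translations by elements of $G'$.

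First, I would apply commutation with constant $f$'s to conclude that $\psi$ commutes with every left translation by $g\in G'$, hence is $G'$-equivariant. Since $\pr_Y$ is the quotient by the $G'$-action, $\psi$ then descends to an automorphism $\bar\beta\colon Y\to Y$ (apply the same reasoning to $\psi^{-1}$), and
\[
\psi(g',y) = (g'\cdot\eta(y),\,\bar\beta(y))
\]
for some morphism $\eta\colon Y\to G'$.

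Next, I would substitute this form into $\psi\circ r_f = r_f\circ\psi$ for an arbitrary $f\in\Rat(Y,G')$. A short calculation, using that $G'$ is commutative, reduces to the pointwise identity
\[
f(y) = f(\bar\beta(y)) \quad \text{for all } f\in\Rat(Y,G').
\]

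The last and key step is to argue that $\Rat(Y,G')$ separates points of $Y$, which then forces $\bar\beta=\id_Y$. For this, $G'$ must be non-trivial. We may assume $G$ itself is non-trivial, for otherwise $\rRepl_X(G)=\{\id_X\}$ and the proposition is vacuous. Since $G$ acts faithfully on the regular model and is connected, the generic stabilizer $G_{x_0}$ is a proper closed subgroup, so $\dim G_{x_0}<\dim G$; from the construction in Lemma~\ref{Lem.Product_model} one has $\dim G' = \dim G - \dim G_{x_0}\geq 1$. Being connected, commutative, affine and positive-dimensional, $G'$ contains a one-parameter subgroup isomorphic to $\GG_a$ or $\GG_m$; composing $\kk(Y)$ (respectively $\kk(Y)^\times$) with such an embedding yields enough elements of $\Rat(Y,G')$ to distinguish any two distinct points of $Y$. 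This forces $\bar\beta=\id_Y$. The argument is essentially bookkeeping with $G'$-equivariance; the only mild subtlety, and the main obstacle, is verifying the non-triviality of $G'$ to guarantee that $\Rat(Y,G')$ is rich enough to separate points.
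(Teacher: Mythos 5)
Your proof is correct and follows essentially the same route as the paper's: commuting with the constant replicas forces $\psi$ to be $G'$-equivariant, and then the $G'$-invariant rational maps $X'\dashrightarrow G'$ (which factor through $Y$ and, since $G'$ is positive-dimensional, separate points of $Y$) force the induced map on $Y$ to be the identity. The only difference is presentational --- the paper argues by contradiction at a single pair of points $(g_0,y_0)$, $(g_1,y_1)=\psi(g_0,y_0)$ rather than first writing $\psi$ in the normal form $(g',y)\mapsto (g'\eta(y),\bar\beta(y))$ --- and your explicit check that $G'$ is non-trivial fills in a detail the paper leaves implicit.
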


		\begin{proof}
			Assume this is not the case. Then there exist $g_0, g_1$ in $G'$ and $y_0 \neq y_1$ in $Y$
			such that $\psi(g_0, y_0) = (g_1, y_1)$. Choose a
			$G'$-invariant rational map $f \colon G' \times Y \dashrightarrow G'$
			with $(g_0, y_0), (g_1, y_1) \in \dom(f)$ and $f(g_0, y_0) \neq f(g_1, y_1)$. Then:
			\begin{eqnarray*}
				f(g_0, y_0) \cdot (g_1, y_1) = 
				\psi(f(g_0, y_0) \cdot (g_0, y_0)) &=& \psi(r_f(g_0, y_0)) \\
				&=& r_f(\psi(g_0, y_0))= f(g_1, y_1) \cdot (g_1, y_1) \, .
			\end{eqnarray*}
			As $G'$ acts freely on $G' \times Y$, we get $f(g_0, y_0) = f(g_1, y_1)$, contradiction.
		\end{proof}

		Let $h \colon G' \times Y \to G'$ be defined via $h(g', y) = \pr_{G'}(\psi(1, y))$, 
		where $\pr_{G'}$ denotes the projection to $G'$.
		Then we get for all $v = (g', y) \in G' \times Y$
		\[
			\psi(v) = \psi(g' \cdot (1, y)) = g' \cdot \psi(1, y) = (g' h(g', y), y)
			= h(v) \cdot v \, ,
		\]
		where the third equality uses Claim~\ref{Claim.Id_on_quotient}.
		This implies the proposition.
	\end{proof}

	We call an element $g$ in a group $G$ \emph{divisible by $k$} if there exists
	an element $f \in G$ such that $f^k = g$. An element is called \emph{divisible} 
	if it is divisible by all
	$k > 0$. If $G$ is an algebraic group, then e.g.~by \cite[Lemma 3.12]{LiReUr2023Characterization-o} for any $g \in G$ there exists
	$k > 0$ that depends on $g$ such that $g^k$ is a divisible element in $G$.
	
	\begin{proposition}\label{Prop.notdivisible}
	Assume $\dim X > 0$. 
	Then there is an element  $g \in \kk(X)^\ast$ such that
	no power of $g$ is divisible in $\kk(X)^\ast$. 
	\end{proposition}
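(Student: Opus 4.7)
The plan is to pass to a normal projective model $Y$ of $X$, use divisor theory on $Y$ to identify the divisible elements of $\kk(X)^{\ast} = \kk(Y)^{\ast}$, and then exhibit $g$ as any transcendental element.

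First I would observe that if $g \in \kk(Y)^{\ast}$ is divisible, then for every integer $n \geq 1$ there exists $f_n \in \kk(Y)^{\ast}$ with $f_n^n = g$. Taking principal divisors gives $\mathrm{div}(g) = n \cdot \mathrm{div}(f_n)$ for all $n$, so every coefficient of $\mathrm{div}(g) = \sum n_D D$ (the sum over prime divisors of $Y$) is divisible by every $n \geq 1$, forcing $\mathrm{div}(g) = 0$. Since $Y$ is normal, $g$ and $g^{-1}$ are therefore regular on $Y$, and since $Y$ is projective we conclude $g \in \kk^{\ast}$. Conversely, every element of $\kk^{\ast}$ is divisible because $\kk$ is algebraically closed of characteristic zero. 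Hence the set of divisible elements of $\kk(X)^{\ast}$ equals $\kk^{\ast}$.

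The statement then follows: an element $g \in \kk(X)^{\ast}$ has some divisible power if and only if some power $g^k$ lies in $\kk^{\ast}$, which happens if and only if $g$ is algebraic over $\kk$. Since $\dim X > 0$, the field $\kk(X)$ has positive transcendence degree over $\kk$, so we may pick any element $g \in \kk(X)^{\ast}$ that is transcendental over $\kk$; for such $g$, no $g^k$ is algebraic over $\kk$, hence no power of $g$ is divisible in $\kk(X)^{\ast}$.

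The only mildly delicate point is the existence of a suitable model. I would justify it by invoking a normal projective model of $X$ (e.g.\ take the normalization of any projective closure of a smooth quasi-projective model of $X$), which exists since $X$ is an irreducible variety over $\kk$; this identifies $\kk(X)^{\ast}$ with $\kk(Y)^{\ast}$ without changing the group of units or the divisor-theoretic argument. No other step looks technical.
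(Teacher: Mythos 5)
Your proof is correct, but it takes a genuinely different route from the paper. The paper argues purely field-theoretically: it writes $\kk(X)$ as a finite extension of a purely transcendental field $\kk(t_1,\dots,t_n)$, observes that $T^p-t_1^l$ is irreducible over $\kk(t_1,\dots,t_n)$ for primes $p>l$, and concludes that $t_1^l$ has no $p$-th root in $\kk(X)$ whenever $p$ also does not divide the degree of the finite extension. You instead work geometrically on a normal projective model $Y$: the relation $\operatorname{div}(g)=n\cdot\operatorname{div}(f_n)$ for all $n$ forces $\operatorname{div}(g)=0$, normality gives regularity of $g$ and $g^{-1}$, and projectivity gives $g\in\kk^\ast$; this yields the stronger and cleaner statement that the divisible elements of $\kk(X)^\ast$ are exactly $\kk^\ast$, so that \emph{every} non-constant $g$ works (whereas the paper only exhibits the single element $t_1$, and only rules out divisibility by sufficiently large primes). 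The trade-off is that your argument needs the existence of a normal projective model and the standard facts that $\operatorname{div}$ is injective modulo constants on such a model, while the paper's argument needs nothing beyond the irreducibility criterion for $T^p-a$ and elementary degree considerations. All the steps you use are standard and correctly deployed: the identification of $\kk(X)^\ast$ with $\kk(Y)^\ast$, the algebraic Hartogs property of normal varieties, $\OO(Y)=\kk$ for integral projective $Y$, and the equivalence between ``some power of $g$ lies in $\kk^\ast$'' and ``$g$ is algebraic over $\kk$'' (using that $\kk$ is algebraically closed, which also gives the converse inclusion $\kk^\ast\subseteq\{\text{divisible elements}\}$). I see no gap.
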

	\begin{proof}
	Note that $\kk(X)$ is a finite extension of the field $\kk(t_1,\dots,t_n)$,
	where $t_1,\dots,t_n$ are algebraically independent in $\kk(X)$ and $n = \dim X > 0$.
	We claim that no power of  $t_1$ is divisible in $\kk(X)^\ast$. Let $l \geq 1$ be a fixed integer.
	First note that for no prime $p > l$, the polynomial
	\[
		T^p - t_1^l \in \kk(t_1, \ldotp, t_n)[T]
	\]
	has a root in $\kk(t_1,\dots,t_n)$, and thus it
	is irreducible over $\kk(t_1, \ldotp, t_n)$,
	see e.g.~\cite[Ch. VI, Theorem~6.2(ii)]{La2002Algebra}. 
	If $p > l$ is a prime that does not divide the degree of the field extension
	$\kk(t_1, \ldots, t_n) \subseteq \kk(X)$, then $T^p-t_1^l$ has no root in 
	$\kk(X)$, and thus $t_1^l$ is not divisible by $p$ in $\kk(X)^\ast$. 
	\end{proof}
	
	Denote by $\Tr_n \subset \Bir(\mathbb{A}^n)$ the subgroup of translations,~i.e.,
	\[
		\Tr_n = 
		\set{ (x_1,\dots,x_n) \mapsto (x_1 + c_1,\dots,x_n + c_n) }
	{c_i \in \kk} \subseteq \Aut(\AA^n) \subseteq 
	\Bir(\AA^n) \, ,
	\]
	by $\T_n$ the standard torus in $\Aut(\AA^n)$,~i.e.,
	\[
		\T_n = \set{(x_1,\dots,x_n) \mapsto (a_1 x_1,\dots, a_n x_n)}
		{a_i \in \kk^\ast} \subseteq \Aut(\AA^n) \subseteq \Bir(\AA^n)
	\]
	and by $\Aff_n$ the subgroup of affine transformations,~i.e.
	the subgroup generated by all linear transformations of $\AA^n$
	and the translations $\Tr_n$.

	\begin{lemma}\label{Lem.normalizeroftranslationsincremona}
	The normalizer $\Norm_{\Bir(\mathbb{A}^n)}(\Tr_n)$  of $\Tr_n$ in $\Bir(\mathbb{A}^n)$ coincides with $\Aff_n$.
	\end{lemma}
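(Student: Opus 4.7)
The plan is to prove the non-trivial inclusion $\Norm_{\Bir(\AA^n)}(\Tr_n) \subseteq \Aff_n$, the reverse being immediate from the computation $L \circ T_c \circ L^{-1} = T_{Lc}$ for linear $L$ and the commutativity of $\Tr_n$.

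First I would fix $\varphi \in \Norm_{\Bir(\AA^n)}(\Tr_n)$ and show that conjugation by $\varphi$ defines an algebraic group automorphism $\alpha_\varphi \colon \Tr_n \to \Tr_n$, $t \mapsto \varphi \circ t \circ \varphi^{-1}$. For this, observe that $(c, x) \dashmapsto (c, \varphi(\varphi^{-1}(x) + c))$ is an algebraic family of birational transformations of $\AA^n$ parametrized by $\Tr_n$, hence defines a morphism $\Tr_n \to \Bir(\AA^n)$ in the sense of Section~\ref{Sec.Notation_Preliminaries}. By hypothesis this morphism has image in the closed algebraic subgroup $\Tr_n$, and it is a group homomorphism. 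The universal property of closed connected algebraic subgroups of $\Bir(X)$ then guarantees that $\alpha_\varphi$ is an algebraic group homomorphism.

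Next, since $\Tr_n \simeq \GG_a^n$ as algebraic group, we have $\Aut_{\textrm{alg.grp}}(\Tr_n) \simeq \GL_n(\kk)$, so there exists $A \in \GL_n(\kk)$ with $\alpha_\varphi(T_c) = T_{Ac}$ for all $c \in \kk^n$. Let $L \in \GL_n(\kk) \subseteq \Aff_n$ be the corresponding linear automorphism of $\AA^n$. A direct computation gives $L \circ T_c \circ L^{-1} = T_{Lc} = T_{Ac}$, so $\alpha_L = \alpha_\varphi$, which means that $L^{-1} \circ \varphi$ centralizes $\Tr_n$ in $\Bir(\AA^n)$.

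Finally, $\Tr_n$ is a closed connected commutative algebraic subgroup of $\Bir(\AA^n)$ acting transitively on $\AA^n$, so $\kk(\AA^n)^{\Tr_n} = \kk$, and it satisfies the hypothesis of Theorem~\ref{Thm.Rosenlicht}. Hence Corollary~\ref{Cor.Centralizer} yields $\Cent_{\Bir(\AA^n)}(\Tr_n) = \Tr_n$, so $L^{-1} \circ \varphi \in \Tr_n$ and therefore $\varphi \in L \circ \Tr_n \subseteq \Aff_n$. The only subtle step is the verification that $\alpha_\varphi$ is algebraic, but once the right algebraic family has been written down this follows formally; everything else is a direct application of the self-centralizing property already established.
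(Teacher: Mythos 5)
Your proof is correct, but it takes a genuinely different route from the paper. The paper's argument is elementary and computational: it first uses the transitivity of $\Tr_n$ on $\AA^n$ together with the normalizer condition to spread out $\lociso(\varphi)$ and conclude that $\varphi$ is a regular automorphism of $\AA^n$; it then writes $\varphi = (\varphi_1, \ldots, \varphi_n)$ in coordinates, observes that $\varphi_i(x+c) - \varphi_i(x)$ is constant for every $c$, and deduces from the translation-invariance of the partial derivatives that $\deg \varphi_i \leq 1$. You instead invoke the structural machinery of the paper: the universal property of closed connected algebraic subgroups of $\Bir(X)$ to see that $\alpha_\varphi$ lies in $\Aut_{\textrm{alg.grp}}(\GG_a^n) \simeq \GL_n(\kk)$ (valid here since $\car(\kk)=0$), then correct by the corresponding linear map and apply the self-centralizing property $\Cent_{\Bir(\AA^n)}(\Tr_n) = \Tr_n$ from Corollary~\ref{Cor.Centralizer}. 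All the ingredients you use are available and non-circular at this point of the paper (indeed the proof of Theorem~\ref{thm:charmain} itself records that $\Tr_n$ is self-centralizing via Corollary~\ref{Cor.Centralizer}), and your verification that $c \mapsto \varphi \circ T_c \circ \varphi^{-1}$ is a morphism to $\Bir(\AA^n)$ is legitimate since composition preserves algebraic families. What each approach buys: the paper's proof is self-contained, works without the Rosenlicht machinery, and exhibits the affine structure of $\varphi$ explicitly; yours is shorter modulo the quoted results and generalizes more readily to normalizers of other connected commutative algebraic subgroups with a dense orbit, at the cost of leaning on Theorem~\ref{Thm.Rosenlicht} and the algebraic-group structure theorem for closed subgroups of $\Bir(X)$.
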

	\begin{proof}
 	Let $\varphi \in \Norm_{\Bir(\mathbb{A}^n)}(\Tr_n)$.
	Then $\lociso(\varphi)$ intersects $\AA^n$, say in $x_0$. For any $x_1 \in \AA^n$,
	there exists a translation $t \in \Tr_n$ such that $x_0 = t(x_1)$. As
	$\varphi$ normalizes $\Tr_n$ there exists $t' \in \Tr_n$ with 
	$t' \circ \varphi \circ t = \varphi$. Using  that $t' \circ \varphi \circ t$ is a local
	isomorphism at $x_1$, we get thus that $\varphi$ is a local isomorphism at $x_1$ as well.
	So, $\varphi \in \Aut(\mathbb{A}^n)$. 
	
	Let $\varphi_1, \ldots, \varphi_n$ be the coordinate functions of the automorphism 
	$\varphi$ of $\AA^n$.
	Using again that $\varphi$ normalizes $\Tr_n$, we get for all $c \in \kk^n$
	and all $i = 1, \ldots, n$ that
	\[
		\varphi_i(x+c)- \varphi_i(x) \in \kk
	\]
	is a constant polynomial. This implies that
	for all $c \in \kk^n$ and all $j = 1, \ldots, n$, we get for the partial derivative 
	\[
		\frac{\partial \varphi_i}{\partial x_j}(x+c) - 
		\frac{\partial \varphi_i}{\partial x_j}(x) = 0 \, .
	\]
	Hence, 
	$\frac{\partial \varphi_i}{\partial x_j} \in \kk$ and thus $\deg \varphi_i \le 1$ 
	for all $i=1, \ldots, n$, i.e.~$\varphi \in \Aff_n$.
	\end{proof}
	
	The next lemma is an analog of \cite[Lemma 5.1]{ReSa2024Maximal-commutativ} and constitutes
	our last ingredient in the proof of Theorem~\ref{thm:charmain}.
	
	\begin{lemma}\label{lemmamaximalunipotent} 
		Let 
		$G$ be a closed commutative  subgroup of $\Bir(X)$ which consists of unipotent elements. 
		Then there exists a commutative unipotent algebraic subgroup $U \subseteq \Bir(X)$ contained
		in $G$ such that $G \subseteq \rRepl_X(U)$ and $\kk(X)^U = \kk(X)^G$.
	\end{lemma}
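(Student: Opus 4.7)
The plan is to construct $U$ as a maximal-dimensional commutative unipotent algebraic subgroup of $\Bir(X)$ contained in $G$, verify $\kk(X)^U = \kk(X)^G$, and then establish $G \subseteq \rRepl_X(U)$ by showing that every element of $G$ centralizes $\rRepl_X(U)$ and invoking Proposition~\ref{Prop.coincideswithitscentr}.

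First I would consider the collection of closed commutative unipotent algebraic subgroups of $\Bir(X)$ contained in $G$ (such subgroups are automatically connected in characteristic zero). This collection is non-empty (it contains the trivial group) and its dimensions are bounded by $\dim X$, since each member acts faithfully on a regular $U$-model with generic orbit of dimension equal to the group dimension. Pick $U$ of maximum dimension. To verify $\kk(X)^U = \kk(X)^G$, I assume for contradiction that the inclusion is strict, pick $g \in G$ and $f \in \kk(X)^U$ with $g \cdot f \neq f$, and note that necessarily $g \notin U$. Since $g$ is unipotent, it lies in the image $U_g$ of some $\GG_a$-morphism to $\Bir(X)$; as $\ZZ \subseteq \GG_a$ is Zariski-dense and the powers of $g$ lie in the closed subgroup $G$, we deduce $U_g \subseteq G$. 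The commuting product $U \cdot U_g$, being the image of the group homomorphism $U \times U_g \to \Bir(X)$, is a closed commutative unipotent algebraic subgroup of $\Bir(X)$ lying in $G$. Moreover $U \cap U_g$, a proper closed subgroup of $U_g \cong \GG_a$, must be trivial. Thus $\dim(U \cdot U_g) = \dim U + 1$, contradicting the maximality of $U$.

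For the inclusion $G \subseteq \rRepl_X(U)$, I fix $g \in G$ and an arbitrary $r_f \in \rRepl_X(U)$, where $f$ is a $U$-invariant rational map from (a regular $U$-model of) $X$ into $U$, and check that $g$ and $r_f$ commute. Since $U \subseteq G$ and $G$ is commutative, $g$ commutes with every $u \in U$ as a birational self-map, which yields $g(f(x) \cdot x) = f(x) \cdot g(x)$. On the other hand $f$ is constant on $U$-orbits, hence factors through the Rosenlicht quotient $\pi_0 \colon X_0 \to Y$ of $U$, and because $g$ fixes $\kk(Y) = \kk(X)^U = \kk(X)^G$ pointwise it induces the identity on $Y$, so $f \circ g = f$. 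Combining both identities gives $g \circ r_f = r_f \circ g$, so $g \in \Cent_{\Bir(X)}(\rRepl_X(U)) = \rRepl_X(U)$ by Proposition~\ref{Prop.coincideswithitscentr}.

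The main obstacle is ensuring that the maximality step terminates, i.e.~the uniform dimension bound $\dim U \leq \dim X$ on all candidate subgroups; this ultimately rests on the fact that a faithful commutative unipotent algebraic action has generic orbit of dimension equal to the group dimension, which is standard in characteristic zero (and can be extracted from Corollary~\ref{Cor.Centralizer_pull-back} applied to a Rosenlicht quotient for the candidate $U$). Once $U$ is constructed, the self-centralizing property of $\rRepl_X(U)$ from Proposition~\ref{Prop.coincideswithitscentr} turns the desired inclusion into a routine centralizer computation.
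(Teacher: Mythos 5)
Your second step (showing $G\subseteq\Cent_{\Bir(X)}(\rRepl_X(U))=\rRepl_X(U)$ once $U$ is in hand with $\kk(X)^U=\kk(X)^G$) is correct and is exactly what the paper does. The problem is the construction of $U$: the quantity you maximize does not exist. It is false that a commutative unipotent algebraic subgroup of $\Bir(X)$ acting faithfully has dimension at most $\dim X$, because the generic stabilizer of a faithful commutative action need not be trivial (it is only the \emph{intersection} of all stabilizers that is trivial). Concretely, for linearly independent $p_1,\dots,p_m\in\kk[x]$ the group $\GG_a^m$ acts faithfully on $\AA^2$ by $(t_1,\dots,t_m)\cdot(x,y)=(x,\,y+\sum_i t_ip_i(x))$, giving commutative unipotent algebraic subgroups of $\Bir(\AA^2)$ of arbitrarily large dimension; all of them sit inside the closed commutative unipotent group $\rRepl_{\AA^2}(\GG_a)=\set{(x,y)\dashmapsto(x,y+p(x))}{p\in\kk(x)}$, which is a perfectly admissible $G$ for this lemma. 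For such a $G$ there is no commutative unipotent algebraic subgroup of maximal dimension, so your construction never gets off the ground, and the contradiction you derive ($\dim(U\cdot U_g)=\dim U+1$) contradicts nothing. Corollary~\ref{Cor.Centralizer_pull-back} does not rescue the bound either: it bounds the dimension of the closure of the image in $\Bir_K(X_K)$, and that closure can collapse an $m$-dimensional subgroup over $\kk$ to a $1$-dimensional subgroup over $K$ (as in the example above).

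The fix — and the route the paper takes — is to induct on the transcendence degree of the invariant field rather than on $\dim U$: starting from a one-dimensional unipotent $U_1\subseteq G$, as long as $\kk(X)^{U_i}\supsetneq\kk(X)^G$ one adjoins $u_{i+1}\in G$ moving $\kk(X)^{U_i}$ to form $U_{i+1}=U_i\circ\overline{\langle u_{i+1}\rangle}$; by Theorem~\ref{Thm.Rosenlicht} each step drops $\operatorname{trdeg}_{\kk}\kk(X)^{U_i}$ by exactly one, so the process terminates after at most $\dim X$ steps with $\kk(X)^{U}=\kk(X)^G$. Your enlargement step $U\mapsto U\cdot U_g$ is essentially this (you even choose $g$ so that it moves $\kk(X)^U$, which is the right condition), so the argument is salvageable by swapping the termination invariant; but as written the proof has a genuine gap.
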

	\begin{proof} 
	We establish first the following claim:

	\begin{claim}
		There is a unipotent algebraic subgroup $U \subseteq G$ such that $\kk(X)^U = \kk(X)^G$.
	\end{claim}

	\begin{proof}
	Let $U_1 \subseteq G$ be a one-dimensional unipotent subgroup. If $\kk(X)^{U_1} = \kk(X)^G$, then we set $U=U_1$ and the claim is proven. Hence, we can assume that $\kk(X)^G \subsetneq \kk(X)^{U_1}$. 
	Pick an element $u_2 \in G \setminus U_1$ such that $(\kk(X)^{U_1})^{u_2} \subsetneq \kk(X)^{U_1}$.
	Hence, $U_2 \coloneqq U_1 \circ \overline{\langle u_2 \rangle}$ is an algebraic subgroup of
	$\Bir(X)$ contained in $G$, isomorphic to $\GG_a^2$ and $\kk(X)^{U_2} \subsetneq \kk(X)^{U_1}$.
	Again, if $\kk(X)^{U_2} = \kk(X)^G$, then we set $U= U_2$ and the claim is proven. Hence, we can assume that $\kk(X)^G \subsetneq \kk(X)^{U_2}$.
	We continue in this way and
	build a successive chain of algebraic subgroups $U_1 \subsetneq U_2 \subsetneq
	\cdots$ inside $G$ such that $\dim U_i = i$ and $\kk(X)^{U_i} \subsetneq (\kk(X)^{U_{i-1}}$ for all $i \ge 1$.
	By e.g.~Theorem~\ref{Thm.Rosenlicht} 
	we have that the transcendence degree  of $\kk(X)^{U_{i-1}}$ is one more than
	that of $\kk(X)^{U_i}$. Hence, the sequence $U_1 \subsetneq U_2 \subsetneq \dots$ stops with some 
	$U = U_k$, $k \ge 1$ and
	$\kk(X)^U = \kk(X)^G$. This proves the claim. 
	\end{proof}
	
	Since any element of $\rRepl_X(U)$ commutes with $G$ 
	(here we use that $\kk(X)^G = \kk(X)^U$) we get that $G$ is contained in the centralizer
	of $\rRepl_X(U)$ in $\Bir(X)$. The statement follows, since $\rRepl_X(U)$
	is self-centralizing, see Proposition~\ref{Prop.coincideswithitscentr}.
\end{proof}

\begin{proof}[Proof of Theorem~\ref{thm:charmain}]
	Let $\Xi\colon \Bir(\mathbb{A}^n) \to \Bir(X)$ be an isomorphism of groups. Note that 
	the translations $\Tr_n$ and the torus $\T_n$ both act 
	with a dense open orbit on $\AA^n$ and thus
	Corollary~\ref{Cor.Centralizer} implies that $\Tr_n$, $\T_n$ are both self-centralizing.
	In particular, the images $\Xi(\Tr_n), \Xi(\T_n)$ are closed subgroups
	of $\Bir(X)$. Since every countable index subgroup $A$ of $\Tr_n$ is dense in $\Tr_n$,
	we get $\Cent_{\Bir(\AA^n)}(A) = \Cent_{\Bir(\AA^n)}(\Tr_n) = \Tr_n$ and in particular,
	\begin{equation}
		\label{Eq.Centralizer_image_Tr_n_circ}
		\Cent_{\Bir(X)}(\Xi(\T_n)^\circ) = \Xi(\T_n) \, .
	\end{equation}
	Since $\T_n$ acts by conjugation with finitely many orbits on $\Tr_n$, it 
	follows that $\Xi(\T_n)^\circ$ acts with countably many orbits on $\Xi(\Tr_n)^\circ$.
	By Theorem~\ref{Thm.Existence_unipot}
	applied to $H = \Xi(\Tr_n)^\circ$ and $G = \Xi(\T_n)^\circ$
	there exist unipotent elements in $\Bir(X)$
	that commute with $\Xi(\Tr_n)^\circ$ and hence, $\Xi(\Tr_n)$ contains unipotent elements
	by~\eqref{Eq.Centralizer_image_Tr_n_circ}.
	As $\SL_n \subseteq \Bir(\mathbb{A}^n)$ acts transitively on 
	$\Tr_n \setminus \{ \id_{\AA^n} \}$ by conjugation, we conclude that  
	all elements of $\Xi(\Tr_n) \setminus \{ \id_X \}$ are conjugate in $\Bir(X)$. 
	Hence, all elements of $\Xi(\Tr_n)$ are unipotent. In particular,
	$\Xi(\Tr_n)$ is connected.

	Assume first $\kk(X)^{\Xi(\Tr_n)} = \kk$. Then by Corollary~\ref{Cor.Centralizer},
	it is a commutative, unipotent algebraic subgroup of $\Bir(X)$ of dimension $k \coloneqq \dim X$.
	Using the Weil regularization theorem (see e.g.~\cite{Br2022On-models-of-algeb}), we may assume that $\Xi(\Tr_n)$ acts regularly on $X$.
	Hence, the dense open orbit of $X$ is isomorphic to $\Xi(\Tr_n)$. Again after conjugation,
	we may assume that $X = \AA^k$ and $\Xi(\Tr_n) = \Tr_k$.
	By Lemma \ref{Lem.normalizeroftranslationsincremona} the normalizer of 
	$\Tr_n \subset \Bir(\mathbb{A}^n)$ is isomorphic to $\Aff_n$
	and the normalizer of $\Tr_k \subseteq \Bir(\AA^k)$ is isomorphic to $\Aff_k$. 
	Therefore, $\Xi$ induces an isomorphism of the groups $\Aff_n$ and $\Aff_k$ 
	which sends the translations $\Tr_n \subseteq \Aff_n$ to the translations 
	$\Tr_k \subseteq \Aff_k$. Therefore, $\Xi$ induces an isomorphism from 
	$\Aff_n/\Tr_n \simeq \GL_n$ to 
	$\Aff_k/\Tr_k \simeq \GL_k$. 
	Then the commutators $\SL_n = [\GL_n, \GL_n]$ and $\SL_k = [\GL_k, \GL_k]$ are isomorphic and hence,
	the centers of them are isomorphic. But this says $n = k$.

	Assume now that 
	$\kk(X)^{\Xi(\Tr_n)} \neq \kk$. 
	Let $U \subseteq \Xi(\Tr_n)$ be an algebraic subgroup with 
	\begin{equation}
		\label{Eq.Nice_Uipotent_U}
		\Xi(\Tr_n) \subseteq \rRepl_X(U)
		\quad \textrm{and} \quad \kk(X)^U = \kk(X)^{\Xi(\Tr_n)} \, ,
	\end{equation}
	see Lemma~\ref{lemmamaximalunipotent}. 
	Using Weil's regularization theorem again, we may assume that $U$ acts regularly and faithfully
	on $X$. By Lemma~\ref{Lem.Product_model} there exists a closed subgroup $U' \subseteq U$
	and a birational $U'$-equivariant morphism $\varepsilon \colon U' \times Y \dashrightarrow X$
	for some variety $Y$ with $\varepsilon^\ast(\kk(X)^U) = \kk(Y)$.
	As 
	$\rRepl_{U' \times Y}(U') \subseteq \varepsilon^{-1} \circ \rRepl_X(U) \circ \varepsilon$,
	as $\rRepl_{U' \times Y}(U')$ is self-centralizing 
 	in $\Bir(U' \times Y)$ (see Proposition~\ref{Prop.coincideswithitscentr}) and since
	$\rRepl_X(U)$ is commutative, it follows that
	\[
		\rRepl_{U' \times Y}(U') = \varepsilon^{-1} \circ \rRepl_X(U) \circ \varepsilon \, .
	\]
	Moreover, $\varepsilon^\ast(\kk(X)^{\Xi(\Tr_n)}) = 
	\varepsilon^\ast(\kk(X)^U) = \kk(Y) = \kk(U' \times Y)^{U'}$.
	Thus, we may replace $X$ with $U' \times Y$ and $U$ with $U'$, and~\eqref{Eq.Nice_Uipotent_U}
	is still satisfied.

	As $\rRepl_X(U)$ is commutative and $\Tr_n$
	is self-centralizing in $\Bir(\AA^n)$, we get from~\eqref{Eq.Nice_Uipotent_U}
	\[
		\Xi^{-1}(\rRepl_X(U)) = \Tr_n \, .
	\]
	Denote by $D$ the standard subtorus of $\Bir(U) \subset \Bir(U \times Y) = \Bir(X)$. 
	Then $\kk(X)^D = \kk(Y) = \kk(X)^U = \kk(X)^{\Xi(\Tr_n)} \neq \kk$.
	The subgroup $\rRepl_X(D) \subset \Bir(X)$ is self-cen\-tra\-li\-zing by Proposition \ref{Prop.coincideswithitscentr} and normalizes $\rRepl_X(U)$. Hence, 
	\[
		A \coloneqq \Xi^{-1}(\rRepl_X(D))
	\] 
	is a closed subgroup of $\Bir(\AA^n)$ which normalizes $\Tr_n$. 
	By Lemma~\ref{Lem.normalizeroftranslationsincremona}, $A$ is contained in $\Aff_n$.
	Hence, for every element $a \in A$
	there exists  $k > 0$ such that $a^k$ is
	divisible in $A$, see~e.g.~\cite[Lemma 3.12]{LiReUr2023Characterization-o}. 

	Note that for a general $x \in X$ the stabilizer $D_x$
	is trivial and thus $\Rat^D(X, D) \to \rRepl_X(D)$, $f \mapsto r_f$
	is bijective. Hence, we get the following group isomorphism
	\[
		\rRepl_{X}(D) \simeq \Rat^D(X, D) \simeq \Rat(Y, D) \simeq
		\kk(Y)^\ast \times \cdots \times \kk(Y)^\ast \, .
	\]
	However, using Lemma \ref{Prop.notdivisible} and $\kk(Y) \neq \kk$, there exist
	elements in $\kk(Y)^\ast$ such that no power of it is divisible in $\kk(Y)^\ast$.
	This contradiction proves the theorem.
\end{proof}

\begin{proof}[Proof of Theorem~\ref{thm:charmain2}]
	Let $\Xi \colon \Bir(\AA^1 \times Y) \to \Bir(X)$ be a group isomorphism. Denote by 
	$U \subseteq \Bir(\AA^1 \times Y)$ 
	the algebraic subgroup of automorphisms $(t, y) \mapsto (t + \lambda, y)$
	for $\lambda \in \GG_a$ and by $D \subseteq \Bir(\AA^1 \times Y)$ the algebraic subgroup
	of automorphisms $(t, y) \mapsto (\mu t, y)$ for $\mu \in \GG_m$. Then 
	$\rRepl(D)$ acts via conjugation with two orbits on 
	$\rRepl(U)$ (this action is given by
	$\kk(Y)^{\ast} \times \kk(Y) \to \kk(Y)$, $(p, q) \mapsto pq$). 
	Hence, the same holds for the images of $\rRepl(D)$
	and $\rRepl(U)$ in $\Bir(X)$. As $\rRepl(D)$ and $\rRepl(U)$ are self-centralizing
	in $\Bir(\AA^1 \times Y)$ (see Proposition~\ref{Prop.coincideswithitscentr}) 
	the same holds for their images in $\Bir(X)$ and in particular, they are closed in $\Bir(X)$.
	Hence, $G \coloneqq \Xi(\rRepl(D))^\circ$ and $H \coloneqq \Xi(\rRepl(U))^\circ$ are closed connected commutative non-trivial
	subgroups of $\Bir(X)$ and $G$ acts via conjugation on $H$ with countably many orbits. By Theorem~\ref{Thm.Existence_unipot} there exists a unipotent element in $\Bir(X)$
	that is different from the identity. By \cite[Theorem~1]{Ma1963On-algebraic-group} we conclude
	that $X$ is ruled.
\end{proof}

\par\bigskip
\renewcommand{\MR}[1]{}
\bibliographystyle{amsalpha}
\bibliography{newBIB}

\end{document}